\author{A.A. Vasil'eva}
\title{Widths of weighted Sobolev classes with restrictions $f(a)=\dots=f^{(k-1)}(a)=f^{(k)}(b)=\dots
=f^{(r-1)}(b)=0$ and spectra of non-linear differential equations}
\date{}
\begin{document}

\maketitle

\newenvironment{Biblio}{%
                  \renewcommand{\refname}{\footnotesize REFERENCES}%
                  }

\def\inff{\mathop{\smash\inf\vphantom\sup}}
\renewcommand{\le}{\leqslant}
\renewcommand{\ge}{\geqslant}
\newcommand{\sgn}{\mathrm {sgn}\,}
\newcommand{\inter}{\mathrm {int}\,}
\newcommand{\dist}{\mathrm {dist}}
\newcommand{\supp}{\mathrm {supp}\,}
\newcommand{\R}{\mathbb{R}}
\renewcommand{\C}{\mathbb{C}}
\newcommand{\Z}{\mathbb{Z}}
\newcommand{\N}{\mathbb{N}}
\newcommand{\Q}{\mathbb{Q}}
\theoremstyle{plain}
\newtheorem{Trm}{Theorem}
\newtheorem{trma}{Theorem}

\newtheorem{Def}{Definition}
\newtheorem{Cor}{Corollary}
\newtheorem{Lem}{Lemma}
\newtheorem{Rem}{Remark}
\newtheorem{Sta}{Proposition}
\newtheorem{Exa}{Example}
\renewcommand{\proofname}{\bf Proof}
\renewcommand{\thetrma}{\Alph{trma}}

\section{Introduction}

In papers of Pinkus \cite{pinkus_constr}, Buslaev and Tikhomirov
\cite{busl_tikh} it was proved that the Kolmogorov widths of the
Sobolev class $W^r_p[a, \, b]$ with some boundary conditions in
the space $L_q[a, \, b]$ for $p\ge q$ coincide with spectral
numbers of some special non-linear differential equation with the
same boundary conditions. For weighted Sobolev classes with
piecewise continuous weights such theorems were obtained by
Buslaev \cite{busl_dan}, \cite{busl_aa}, and for $r=1$ and $g\in
L_{p'}[a, \, b]$, $v\in L_q[a, \, b]$, by Edmunds and Lang
\cite{edm_lang_2008}.

Buslaev and Tikhomirov \cite{busl_tikh} considered the following
boundary conditions for functions $x$: 1) $x^{(j)}(a)=0$, $0\le
j\le r-1$, 2) $x^{(j)}(a) =0$, $x^{(j)}(b)=0$, $0\le j\le r-1$, 3)
no boundary conditions, 4) $x^{(j)}(a)=0$ for even $j$,
$x^{(j)}(b)=0$ for odd $j$, $0\le j\le r-1$, 5) periodic
conditions. In case 1) this result was generalized in
\cite{vas_spn} for weighted Sobolev classes in weighted Lebesgue
space. It was supposed that the corresponding two-weighted Riemann
-- Liouville operator (see formula (\ref{tirmu1})) is compact.
Moreover, it was proved that the widths always strictly decrease.
Here we obtain the analogue of this theorem for the boundary
conditions $x(a)=\dots =x^{(k-1)}(a)=x^{(k)}(b)=\dots
=x^{(r-1)}(b)=0$.

We give the definition of the Kolmogorov, linear, Gelfand and
Bernstein widths.

Let $(X, \, \|\cdot\|_X)$ be a normed space, let $X^*$ be its
dual, and let ${\cal L}_n(X)$, $n\in \Z_+$, be the family of
subspaces of $X$ of dimension at most $n$. Denote by $L(X, \, Y)$
the space of continuous linear operators from $X$ into a normed
space $Y$. Also, by ${\rm rk}\, A$ denote the dimension of the
image of an operator $A\in L(X, \, Y)$, and by $\| A\|
_{X\rightarrow Y}$, its norm.

By the Kolmogorov $n$-width of an absolutely convex set $M\subset
X$ in the space $X$, we mean the quantity
$$
d_n(M, \, X)=\inff _{L\in {\cal L}_n(X)} \sup_{x\in M}\inff_{y\in
L}\|x-y\|_X,
$$
by the linear $n$-width, the quantity
$$
\lambda_n(M, \, X) =\inff_{A\in L(X, \, X), \, {\rm rk} A\le
n}\,\sup _{x\in M}\| x-Ax\| _X,
$$
by the Gelfand $n$-width, the quantity
$$
d^n(M, \, X)=\inff _{x_1^*, \, \dots, \, x_n^*\in X^*} \sup
\{\|x\|:\; x\in M, \, x^*_j(x)=0, \; 1\le j\le n\},
$$
and by the Bernstein $n$-width, the quantity
$$
b_n(M, \, X) = \sup _{\dim L=n+1} \sup \{R>0:\; B_R(0)\cap
L\subset M\}
$$
(here the supremum is taken over the family of linear
$n+1$-dimensional subspaces). The following relations between
widths are well-known (here $M$ is an algebraic sum of a compact
and a finite dimensional subspace; see \cite[p. 207]{itogi_nt}):
\begin{align}
\label{bndnln} b_n(M, \, X)\le d_n(M, \, X)\le \lambda_n(M, \, X),
\quad b_n(M, \, X)\le d^n(M, \, X)\le \lambda_n(M, \, X).
\end{align}
In \cite{pietsch1} a notion of strict $s$-numbers of a linear
continuous operator was introduced. Examples of strict $s$-numbers
are the Kolmogorov numbers, the Gelfand numbers, the approximation
numbers and the Bernstein numbers. The Kolmogorov numbers of an
operator $A\in L(X, \, Y)$ are defined by
$$
d_n(A:X\rightarrow Y)=d_n(A(B_X), \, Y),
$$
the approximation numbers, by
$$
a_n(A:X\rightarrow Y)=\inf \{\| A-A_n\| _{X\rightarrow Y}:{\rm
rk}\, A_n\le n\},
$$
the Gelfand numbers, by
$$
c_n(A:X\rightarrow Y)=\inff _{\{x_j^*\}_{j=1}^n\subset X^*} \sup
\{\|Ax\|:\;x\in \cap _{j=1}^n \ker x_j^*, \; \|x\|\le 1\},
$$
the Bernstein numbers, by
$$
b_n(A:X\rightarrow Y)=\sup _{\dim \, L\ge n+1} \sup \{R>0:\;
B_R(0) \subset A(B_X\cap L)\}
$$
(the supremum is taken over the family of subspaces in $X$ of
dimension $n+1$ or larger). Then $b_n(A:X\rightarrow
Y)=b_n(A(B_X), \, Y)$. Heinrich \cite{heinr} proved that if the
operator $A$ is a compact embedding, then
\begin{align}
\label{heinr_result} a_n(A:X\rightarrow Y)=\lambda_n(A(B_X), \,
Y), \quad c_n(A:X\rightarrow Y)=d^n(A(B_X), \, Y).
\end{align}

Let $r\in \N$. For measurable nonnegative functions $u$, $w$ and
$m\in \N$ we define the Riemann -- Liouville operators
\begin{align}
\label{tirmu1} \begin{array}{c} \tilde I_{m,u,w,b}\varphi(t)
=\frac{w(t)}{(m-1)!} \int \limits _t^b (s-t)^{m-1}
u(s)\varphi(s)\, ds,
\\
I_{m,u,w,a} \psi(t) = \frac{w(t)}{(m-1)!}\int \limits _a^t
(t-s)^{m-1} u(s)\psi(s)\, ds.
\end{array}
\end{align}
If $m=1$, they are called two-weighted Hardy-type operators. Given
$r\ge 2$, $1\le m\le r-1$, we set
\begin{align}
\label{tir1} \tilde I^{a,b,m}_{r,u,w} =I_{m,1,w,a} \circ \tilde
I_{r-m,u,1,b}, \quad \tilde I^m_{r,u,w}=\tilde I^{0,1,m}_{r,u,w}.
\end{align}
The criterion of continuity of the operator $\tilde
I^{a,b,m}_{r,u,w}:L_p[a, \, b] \rightarrow L_q[a, \, b]$ was
obtained by Heinig and Kufner \cite{kufner_heinig}. The criterion
of continuity of two-weighted Riemann -- Liouville operators
$I_{m,u,w,a}:L_p[a, \, b] \rightarrow L_q[a, \, b]$ was obtained
by Stepanov \cite{stepanov1, stepanov2}.

Given $1<p<\infty$, we set $p'=\frac{p}{p-1}$. Then, if the
operator $\tilde I^{a,b,m}_{r,u,w}:L_p[a, \, b] \rightarrow L_q[a,
\, b]$ is continuous, then its dual has the form $\tilde
I^{a,b,r-m}_{r,w,u}:L_{q'}[a, \, b] \rightarrow L_{p'}[a, \, b]$.

Let $g$, $v:[a, \, b] \rightarrow (0, \, \infty)$ be measurable
functions, $1<p<\infty$, $1<q<\infty$, $r\ge 2$, $1\le k\le r-1$,
and let the operator $\tilde I^{a,b,k}_{r,g,v}: L_p[a, \, b]
\rightarrow L_q[a, \, b]$ be continuous. Then Heinig and Kufner's
criterion \cite[Theorem 1 and Section 2]{kufner_heinig} implies
that for any $\varepsilon \in (0, \, 1)$
\begin{align}
\label{gv} \begin{array}{c} g\in L_{p'}(a+\varepsilon, \, b], \;\;
v\in L_q(a+\varepsilon, \, b], \\ (s\mapsto (s-a)^kv(s))\in L_q[a,
\, b-\varepsilon), \;\; (s\mapsto (s-a)^{r-k}g(s))\in L_{p'}[a, \,
b-\varepsilon). \end{array}
\end{align}

We set
\begin{align}
\label{w_rk_pg_def} W^{r,k}_{p,g}[a, \, b]=\{\tilde
I^{a,b,k}_{r,g,1}\varphi:\; \|\varphi\|_{L_p[a, \, b]}\le 1\}.
\end{align}
From (\ref{gv}) it follows that the iterated integral
$$
\frac{v(t)}{(k-1)!(r-k-1)!}\int \limits_a^t (t-s)^{k-1}\int
\limits _s^b (\tau -s)^{r-k-1} g(\tau)\varphi(\tau)\, d\tau\, ds
$$
is well-defined for all $\varphi\in L_p[a, \, b]$. We call the set
$W^{r,k}_{p,g}[a, \, b]$ a weighted Sobolev class with
restrictions $f(a)=\dots =f^{(k-1)}(a)=f^{(k)}(b)=\dots
=f^{(r-1)}(b)=0$.

Let
$$
\|f\|_{L_{q,v}[a, \, b]}=\| vf\| _{L_q[a, \, b]}, \quad L_{q,v}[a,
\, b] =\{f: \; \|f\|_{L_{q,v}[a, \, b]}<\infty\}.
$$
We call $L_{q,v}[a, \, b]$ a weighted Lebesgue space.

We set ${\cal W}^{r,k}_{p,g}[a, \, b]={\rm span}\,
W^{r,k}_{p,g}[a, \, b]$ and endow this space with the norm
$\|f\|_{{\cal W}^{r,k}_{p,g}[a, \, b]}=\left\|
\frac{f^{(r)}}{g}\right\|_{L_p[a, \, b]}$.

Throughout we assume that $1<q\le p<\infty$, $g(t)>0$, $v(t)>0$
a.e. and the operator $\tilde I^{a,b,k}_{r,g,v}: L_p[a, \, b]
\rightarrow L_q[a, \, b]$ is compact.

We denote $$h_{(\sigma)} =|h|^{\sigma-1} {\rm sgn}\, h$$ and
consider the following boundary value problem:
\begin{align}
\label{eq_main} \left\{
\begin{array}{l}
x^{(r)}=(-1)^{r-k}g^{p'}y_{(p')}, \\
y^{(r)}=(-1)^k\theta^q v^q x_{(q)}, \\
x(a)=\dots =x^{(k-1)}(a)=x^{(k)}(b)=\dots= x^{(r-1)}(b)=0, \\
y(a)=\dots = y^{(r-k-1)}(a)= y^{(r-k)}(b)= \dots = y^{(r-1)}(b)=0,
\\ \left \| \frac{x^{(r)}}{g} \right\| _{L_p[a, \, b]}=1;
\end{array}
\right.
\end{align}
here $\theta >0$, the functions $x,  \, \dots, \, x^{(k-1)}$, $y,
\, \dots, \, y^{(r-k-1)}$ are locally absolutely continuous on
$[a, \, b)$, and the functions $x^{(k)}, \, \dots, \, x^{(r-1)}$,
$y^{(r-k)}, \, \dots, \, y^{(r-1)}$ are locally absolutely
continuous on $(a, \, b]$.

Let $f$ be a measurable function, $n\in \Z_+$. We say that $f$ has
exactly $n$ points of sign change if there are points
$a=t_0<t_1<\dots <t_n<t_{n+1}=b$ such that
\begin{enumerate}
\item for any $1\le j\le n$, $t$, $s\in [t_{j-1}, \, t_j]$
the inequality $f(t)f(s)\ge 0$; in addition, the set $\{t\in
[t_{j-1}, \, t_j]:\; f(t)\ne 0\}$ has a positive measure for each
$j\in \{1, \, \dots, \, n\}$;
\item for any $1\le j\le n-1$, $t\in [t_{j-1}, \, t_j]$, $s\in [t_j, \,
t_{j+1}]$ the inequality $f(t)f(s)\le 0$ holds.
\end{enumerate}
We say that $f$ has no more than $n$ points of sign change if $f$
has exactly $m$ points of sign change with $0\le m\le n$.

Let $x\in {\cal W}^r_{p,g}[a, \, b]$, $y\in {\cal W}^r_{q',v}[a,
\, b]$, $\theta>0$, $n\in \Z_+$. We say that $(x, \, y, \, \theta)
\in SP_n$ (correspondingly, $(x, \, y, \, \theta) \in
\widetilde{SP}_n$), if (\ref{eq_main}) holds and the function $x$
has exactly $n$ (correspondingly, no more that $n$) points of sign
change; we call $x$ {\it the spectral function}. Denote by $sp_n$
(correspondingly, by $\widetilde{sp}_n$) the set of numbers
$\theta>0$ such that $(x, \, y, \, \theta) \in SP_n$
(correspondingly, $(x, \, y, \, \theta) \in \widetilde{SP}_n$) for
some $x\in {\cal W}^r_{p,g}[a, \, b]$, $y\in {\cal W}^r_{q',v}[a,
\, b]$. We set $\overline{\theta}_n=\sup sp_n$, $\tilde\theta
_n=\sup \widetilde{sp}_n$.

\begin{Trm}
\label{main_trm} Let $1<q\le p<\infty$, $g(t)>0$, $v(t)>0$ a.e.,
the operator $\tilde I^{a,b,k}_{r,g,v}:L_p[a, \, b] \rightarrow
L_q[a, \, b]$ is compact. Then
\begin{align}
\label{main_eq} d_n(W^{r,k}_{p,g}[a, \, b], \, L_{q,v}[a, \, b]) =
\lambda_n(W^{r,k}_{p,g}[a, \, b], \, L_{q,v}[a, \,
b])=d^n(W^{r,k}_{p,g}[a, \, b], \, L_{q,v}[a, \, b])
=\overline{\theta}_n^{\, -1}.
\end{align}
Moreover, the widths strictly decrease with respect to $n$. If
$p=q$, then the set $sp_n$ is a singleton and
\begin{align}
\label{bern_dn} b_n(W^{r,k}_{p,g}[a, \, b], \, L_{p,v}[a, \,
b])=d_n(W^{r,k}_{p,g}[a, \, b], \, L_{p,v}[a, \, b])
=\overline{\theta}_n^{\, -1}.
\end{align}
\end{Trm}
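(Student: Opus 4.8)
The plan is to translate the width problem into the language of $s$-numbers of the compact operator $T:=\tilde I^{a,b,k}_{r,g,v}:L_p[a,\,b]\to L_q[a,\,b]$. Since multiplication by $v$ is an isometry of $L_{q,v}[a,\,b]$ onto $L_q[a,\,b]$ carrying $W^{r,k}_{p,g}[a,\,b]=\tilde I^{a,b,k}_{r,g,1}(B_{L_p})$ onto $T(B_{L_p})$, every width of the class in $L_{q,v}$ equals the corresponding width of $T(B_{L_p})$ in $L_q$. By Heinrich's theorem (\ref{heinr_result}) these coincide with $a_n(T)$ and $c_n(T)$, while $d_n$ and $b_n$ are by definition the Kolmogorov and Bernstein numbers. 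The general chain (\ref{bndnln}) gives $b_n(T)\le d_n(T)\le a_n(T)$ and $b_n(T)\le c_n(T)\le a_n(T)$, so it suffices to prove the single upper estimate $a_n(T)\le\overline{\theta}_n^{-1}$ together with the lower estimates $d_n(T)\ge\overline{\theta}_n^{-1}$ and $c_n(T)\ge\overline{\theta}_n^{-1}$; equality of all three middle quantities with $\overline{\theta}_n^{-1}$ then follows.

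Next I would analyse the nonlinear system (\ref{eq_main}) itself. The system is exactly the Euler--Lagrange system for the problem of maximizing $\|vx\|_{L_q}/\|x^{(r)}/g\|_{L_p}$ over functions with the stated boundary data, with $y$ playing the role of the adjoint variable and $\theta$ the critical value; the duality noted in the excerpt, namely that $\tilde I^{a,b,k}_{r,g,v}$ has adjoint $\tilde I^{a,b,r-k}_{r,v,g}$, is what makes the roles $x\leftrightarrow y$, $p\leftrightarrow q'$, $a\leftrightarrow b$, $k\leftrightarrow r-k$ symmetric in (\ref{eq_main}); this symmetry is the main structural novelty relative to the case already treated in \cite{vas_spn}. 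Using compactness of $T$ I would show that $sp_n$ is nonempty and bounded, that $\overline{\theta}_n=\sup sp_n$ is attained by some triple $(x,\,y,\,\overline{\theta}_n)\in SP_n$, and, via a Sturm-type oscillation analysis of the system adapted to the singular weights and to the split boundary data at the two endpoints, that a spectral function with $\theta$ close to $\overline{\theta}_n$ has its $n$ sign-change points $t_1<\dots<t_n$ partitioning $[a,\,b]$ into $n+1$ subintervals on which $x$ is a genuine extremal.

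For the upper bound I would fix such a spectral triple with $\theta$ near $\overline{\theta}_n$ and use its sign-change points to build a rank-$n$ operator: the adjoint function $y$ supplies $n$ functionals whose common kernel, intersected with the class, has $L_{q,v}$-norm at most $\theta^{-1}$, giving $a_n(T)\le\theta^{-1}$ and hence $a_n(T)\le\overline{\theta}_n^{-1}$. The lower bounds are where the real work lies, and I expect this to be the main obstacle. To bound $d_n(T)$ from below I would construct an odd continuous map from the sphere $S^n$ into the class by gluing, on the $n+1$ subintervals, rescaled copies of the spectral function with amplitudes ranging over the unit sphere, and then invoke Borsuk's antipodality theorem: no $n$-dimensional subspace can approximate this whole family within $\overline{\theta}_n^{-1}$, because the sign-change and oscillation structure forces the numerator and denominator norms to transfer exactly along each piece. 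The estimate for $c_n(T)$ I would obtain either by the dual Borsuk argument or, more economically, from the identity $c_n(T)=d_n(T^*)$ combined with the $x\leftrightarrow y$ symmetry of (\ref{eq_main}), which identifies the adjoint spectral problem with one of the same type.

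Finally I would establish strict monotonicity and the $p=q$ refinements. Strict decrease $\overline{\theta}_{n+1}<\overline{\theta}_n$ follows from the oscillation theory: equality would yield a single $\theta$ admitting spectral functions with both $n$ and $n+1$ sign changes, contradicting the monotone dependence of the zero count on $\theta$ that compactness guarantees. When $p=q$ the numerator and denominator of the variational quotient are homogeneous of the same degree, so the Lagrange condition pins down the eigenvalue uniquely and $sp_n$ is a singleton; moreover both norms become additive in the $\ell_p$ sense over the subintervals, so the glued $(n+1)$-dimensional subspace has the property that its $L_{p,v}$-ball of radius $\overline{\theta}_n^{-1}$ lies inside the class. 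This yields $b_n(T)\ge\overline{\theta}_n^{-1}$, which together with $b_n(T)\le d_n(T)=\overline{\theta}_n^{-1}$ gives (\ref{bern_dn}).
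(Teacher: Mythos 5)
Your skeleton (reduce to $s$-numbers of the compact operator via Heinrich's result, prove lower bounds for $d_n$, $d^n$ and an upper bound for $\lambda_n$, then handle strict monotonicity and the $p=q$ Bernstein case separately) matches the paper, but the three substantive steps all have genuine gaps. First, the upper bound: a family of $n$ functionals whose common kernel meets the class in a set of small norm bounds the Gelfand number $c_n=d^n$, \emph{not} the approximation number $a_n=\lambda_n$; with only that, the chain $b_n\le d_n\le\lambda_n$ does not close and you get no upper bound for $d_n$ or $\lambda_n$ at all. What is needed is an explicit rank-$n$ \emph{linear operator}, and constructing it is the paper's main new ingredient: the kernel $G(t,\,\tau,\,\xi,\,\eta)$ of (\ref{gtt_xi}), built from the determinants $K_{r,k}$ of (\ref{krk}), whose positivity (Lemma \ref{lem_kg0}), reproduction identity (\ref{ovr_x}) and sign property (\ref{xtgt}) are tailored to the mixed boundary conditions $x(a)=\dots=x^{(k-1)}(a)=x^{(k)}(b)=\dots=x^{(r-1)}(b)=0$; the operator is then $P_Lx=\sum_j c_j(x)H(\cdot,\,\eta_j)$ and the error $x-P_Lx$ is estimated via (\ref{xtgt}). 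None of this appears in your proposal.

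Second, the lower bound: gluing rescaled copies of the spectral function on the nodal subintervals and applying Borsuk's theorem is precisely the \emph{Bernstein}-width argument; it gives at best $d_n\ge b_n$, and the ``exact transfer'' of numerator and denominator norms along the pieces that you invoke holds only for $p=q$ --- which is exactly why the paper restricts (\ref{bern_dn}) to $p=q$. For $q<p$ the minimum of the $L_{q,v}$-norm over your glued sphere is strictly smaller than $\overline{\theta}_n^{\,-1}$, so the bound is not sharp (also, the glued functions generically have derivative jumps at the nodes and so do not lie in $W^{r,k}_{p,g}$). The paper instead obtains the sharp lower bounds for $d_n$ and $d^n$ from Buslaev's iteration process (\ref{ym_u})--(\ref{xm_u}) together with the duality inequality of Lemma \ref{l1} and the arguments of \cite{busl_tikh}. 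Third, strict monotonicity: your claim that equality $\overline{\theta}_{n+1}=\overline{\theta}_n$ contradicts ``monotone dependence of the zero count on $\theta$'' presupposes a Sturm-type uniqueness that is not available here --- for $p\ne q$ the set $sp_n$ is not a singleton, and nothing proved so far prevents $sp_n$ and $sp_{n+1}$ from overlapping; ruling this out \emph{is} the hard content. Note also the logical order in the paper: Section 2 only yields (\ref{main_eq}) with $\tilde\theta_n=\sup\widetilde{sp}_n$, and it is the strict decrease of the widths --- proved in Sections 3--5 by perturbing the extremal subspace (with a separate, delicate analysis for $r=2$ when $\xi_j=\eta_j$) --- that allows replacing $\tilde\theta_n$ by $\overline{\theta}_n$; so strict monotonicity cannot be derived from properties of $\overline{\theta}_n$ that themselves depend on it.
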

The equality (\ref{bern_dn}) is proved similarly as in \cite[p.
392--393]{vas_spn} and \cite[p. 24--25]{pinkus_constr}. Notice
that in the paper of Edmunds and Lang \cite{edm_lang_in_an} the
coincidence of all strict $s$-numbers of a compact two-weighted
Hardy operator was proved for $p=q$.

The paper is organized as follows. In Section 2 we apply the
method from the paper of Buslaev and Tikhomirov \cite{busl_tikh}
and prove (\ref{main_eq}) with $\tilde \theta_n$ instead of
$\overline{\theta}_n$. In proof of the upper estimate we use the
integration operator with the kernel $G(t, \, \tau, \, \xi, \,
\eta)$ defined by formula (\ref{gtt_xi}), which differ from the
kernel from \cite{busl_tikh}.

In Sections 3--5 we prove that the widths are strictly decreasing.
The method of the proof is the same as in \cite{vas_spn}. From the
strict monotonicity of widths we now obtain (\ref{main_eq}) with
$\overline{\theta}_n$. Everywhere in \S 2--5 we write only the
fragments of proof that differ from arguments in \cite{busl_tikh}
and \cite{vas_spn}. Notice that everywhere in \S 2--5 without loss
of generality we may assume that $[a, \, b]=[0, \, 1]$.

In Section 6 we apply the main result in estimating the spectral
numbers for (\ref{eq_main}) with weights $g(x)=x^{-\beta_g}|\ln
x|^{-\alpha_g}\rho_g(|\ln x|)$, $v(x) = x^{-\beta_v} |\ln
x|^{-\alpha_v} \rho_v(|\ln x|)$; here $\rho_g$, $\rho_v$ are
``slowly varying'' functions, $\alpha_g+\alpha_v = r+\frac
1q-\frac 1p$. The order estimates immediately follow from
(\ref{main_eq}) and results of the paper \cite{vas_alg_an}. In
addition, if the parameters are such that $gv \in L_\varkappa$
with $\frac{1}{\varkappa} =r +\frac 1q-\frac 1p$, then asymptotics
of widths and spectral numbers is obtained.

For piecewise continuous weights asymptotics of widths for $p\ge
q$ was obtained by Buslaev \cite{busl_aa}, and for $g\in L_{p'}[a,
\, b]$, $v\in L_q[a, \, b]$, $r=1$, by Edmunds and Lang
\cite{edm_lang} (see also Buslaev's paper \cite{busl_dan} for
$r\in \N$ and some supplementary conditions on weights). For
$p=q$, $r=1$ and more general conditions on weights asymptotics
was obtained by Evans, Harris, Lang, Edmunds and Kerman in
\cite{ev_har_lang, lang_j_at1, lang_remainder}, and for $p\ge q$,
$r\in \N$, in \cite{vas_m_sb}.

The problem of determining exact values of widths was studied in
papers of Kolmogorov, Tikhomirov, Babadzanov, Makovoz, Ligun,
Pinkus and others \cite{kolmog_dn, tikh_babaj, bibl6, bib_makovoz,
tikh_nvtp_art, ligun_aa, pinkus_79} (for details, see
\cite{busl_tikh}). In paper of Malykhin \cite{malykhin_y_v} the
asymptotics of $$d_{n+r}(W^r_\infty[-1, \, 1], \, C[-1, \,
1])/d_{r}(W^r_\infty[-1, \, 1], \, C[-1, \, 1]), \quad r\to
\infty,$$ was obtained. In papers of Babenko and Parfinovich
\cite{bab_parf, bab_parf1} the subspaces of splines of different
defect were studied as extremal subspaces in the problem of
calculating widths.

\section{The application of Buslaev and Tikhomirov's method}

We may assume that $[a, \, b]=[0, \, 1]$.

First we prove that
\begin{align}
\label{dn_low_est} d_n(W^{r,k}_{p,g}[0, \, 1], \, L_{q,v}[0, \,
1]) \ge \tilde{\theta}_n^{\, -1}, \quad d^n(W^{r,k}_{p,g}[0, \,
1], \, L_{q,v}[0, \, 1]) \ge \tilde{\theta}_n^{\, -1}.
\end{align}
This together with (\ref{bndnln}) yield that the similar lower
estimate holds for linear widths.

The system (\ref{eq_main}) with $[a, \, b]=[0, \, 1]$ can be
written as the system of integral equations:
\begin{align}
\label{eq_main_int} \left\{
\begin{array}{l}
x=\tilde I^k_{r,g,1}(g^{p'-1}y_{(p')}), \\
y=\theta^q \tilde I^{r-k}_{r,v,1}(v^{q-1} x_{(q)}),
\\ \left \| g^{p'-1} y_{(p')} \right\| _{L_p[0, \, 1]}=1.
\end{array}
\right.
\end{align}
Notice that $\|g^{p'-1}y_{(p')}\|_{L_p[0, \, 1]} =
\|gy\|_{L_{p'}[0, \, 1]}$.

Let $u$ be a piecewise continuous function with values $\pm 1$.
The Buslaev's iteration process \cite[\S 6]{busl_tikh} is written
as follows:
$$
x_0(\cdot, \, u)=\tilde I^k_{r,g,1}u(\cdot);
$$
for $m\in \N$ we set
\begin{align}
\label{ym_u} y_m(\cdot , \, u) = \theta_{m-1}^q(u) \tilde
I^{r-k}_{r,v,1} [(vx_{m-1})_{(q)}],
\end{align}
\begin{align}
\label{xm_u} x_m(\cdot, \, u) =\tilde I^k_{r,g,1}[(gy_m)_{(p')}];
\end{align}
here $\theta_{m-1}(u)>0$ is chosen so that
\begin{align}
\label{gymlp1} \|gy_m\| _{L_{p'}[0, \, 1]}=1
\end{align}
(it is possible, since $v(\cdot)x_{m-1}(\cdot, \, u)\ne 0$ a.e.,
which can be easily proved by induction). Since the operator
$\tilde I^k_{r,g,v}:L_p[0, \, 1] \rightarrow L_q[0, \, 1]$ and its
dual $\tilde I^{r-k}_{r,v,g}:L_{q'}[0, \, 1] \rightarrow L_{p'}[0,
\, 1]$ are compact, by induction method it can be proved that
$x_m(\cdot, \, u)\in L_{q,v}[0, \, 1]$. Indeed, $u\in L_p[0, \,
1]$. Therefore, $x_0(\cdot, \, u)\in L_{q,v}[0, \, 1]$. Let
$x_{m-1}(\cdot, \, u) \in L_{q,v}[0, \, 1]$. Then
\begin{align}
\label{xm1} \begin{array}{c} (v(\cdot)x_{m-1}(\cdot, \, u))_{(q)}
\in L_{q'}[0, \, 1] \; \Rightarrow \; g(\cdot)y_m(\cdot, \, u) \in
L_{p'}[0, \, 1]\; \Rightarrow \\ \Rightarrow\; (g(\cdot)y_m(\cdot,
\, u))_{(p')} \in L_p[0, \, 1] \; \Rightarrow \;x_m(\cdot, \,
u)\in L_{q,v}[0, \, 1].
\end{array}
\end{align}

The following lemma is similar to Lemma 1 from \cite[\S
6]{busl_tikh}.
\begin{Lem}
\label{l1} The following estimates hold:
$$
\|x_{m-1}(\cdot, \, u)\|_{L_{q,v}[0, \, 1]} \le [\theta
_{m-1}(u)]^{-1} \le \|x_m(\cdot, \, u)\|_{L_{q,v}[0, \, 1]}.
$$
\end{Lem}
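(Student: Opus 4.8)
The plan is to recast the iteration in a self-dual operator form and then read off both inequalities from the duality identity $\langle T\varphi,\psi\rangle=\langle\varphi,T^\ast\psi\rangle$ together with H\"older's inequality. I set $T=\tilde I^k_{r,g,v}:L_p[0,\,1]\to L_q[0,\,1]$, whose dual is $T^\ast=\tilde I^{r-k}_{r,v,g}:L_{q'}[0,\,1]\to L_{p'}[0,\,1]$, and write $\langle f,\,h\rangle=\int_0^1 fh$. Multiplying (\ref{xm_u}) by $v$ and (\ref{ym_u}) by $g$ and using the relations $v\,\tilde I^k_{r,g,1}=T$ and $g\,\tilde I^{r-k}_{r,v,1}=T^\ast$, I would put $\xi=vx_{m-1}(\cdot,\,u)$, $\eta=gy_m(\cdot,\,u)$ and $\zeta=vx_m(\cdot,\,u)$, so that
$$\eta=\theta_{m-1}^q(u)\,T^\ast\xi_{(q)},\qquad \zeta=T\eta_{(p')},\qquad \|\eta\|_{L_{p'}}=1.$$
Throughout I would use the elementary identities $\langle h,\,h_{(\sigma)}\rangle=\|h\|_{L_\sigma}^\sigma$ and $\|h_{(\sigma)}\|_{L_{\sigma'}}=\|h\|_{L_\sigma}^{\sigma-1}$; in particular the normalization becomes $\theta_{m-1}^q(u)\,\|T^\ast\xi_{(q)}\|_{L_{p'}}=1$.

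For the upper estimate $\|x_{m-1}(\cdot,\,u)\|_{L_{q,v}}=\|\xi\|_{L_q}\le\theta_{m-1}^{-1}(u)$ I would use that $\xi$ is itself of the form $\xi=T\eta'_{(p')}$ with $\|\eta'\|_{L_{p'}}=1$: for $m\ge2$ one takes $\eta'=gy_{m-1}(\cdot,\,u)$, normalized by (\ref{gymlp1}); for $m=1$ one has $\xi=Tu$, and since $u$ takes the values $\pm1$ we have $u_{(p')}=u$ and $\|u\|_{L_{p'}[0,\,1]}=1$, so $\eta'=u$ serves. Then
$$\|\xi\|_{L_q}^q=\langle\xi,\,\xi_{(q)}\rangle=\langle T\eta'_{(p')},\,\xi_{(q)}\rangle=\langle\eta'_{(p')},\,T^\ast\xi_{(q)}\rangle\le\|\eta'_{(p')}\|_{L_p}\,\|T^\ast\xi_{(q)}\|_{L_{p'}}=\theta_{m-1}^{-q}(u),$$
the last step using $\|\eta'_{(p')}\|_{L_p}=\|\eta'\|_{L_{p'}}^{p'-1}=1$ and the normalization. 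Hence $\|\xi\|_{L_q}\le\theta_{m-1}^{-1}(u)$.

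For the lower estimate I would compute the cross pairing, using $T^\ast\xi_{(q)}=\theta_{m-1}^{-q}(u)\,\eta$:
$$\langle\zeta,\,\xi_{(q)}\rangle=\langle T\eta_{(p')},\,\xi_{(q)}\rangle=\langle\eta_{(p')},\,T^\ast\xi_{(q)}\rangle=\theta_{m-1}^{-q}(u)\,\langle\eta_{(p')},\,\eta\rangle=\theta_{m-1}^{-q}(u).$$
H\"older's inequality gives $\theta_{m-1}^{-q}(u)\le\|\zeta\|_{L_q}\,\|\xi_{(q)}\|_{L_{q'}}=\|\zeta\|_{L_q}\,\|\xi\|_{L_q}^{q-1}$, and inserting the already-proved bound $\|\xi\|_{L_q}\le\theta_{m-1}^{-1}(u)$ cancels the factor $\|\xi\|_{L_q}^{q-1}$ and leaves $\theta_{m-1}^{-1}(u)\le\|\zeta\|_{L_q}=\|x_m(\cdot,\,u)\|_{L_{q,v}}$, which is the desired right-hand inequality.

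I expect the main thing to watch is legitimacy rather than difficulty: every pairing must converge and the duality $\langle T\varphi,\psi\rangle=\langle\varphi,T^\ast\psi\rangle$ must apply, which is exactly where I would invoke the compactness hypothesis and the membership chain (\ref{xm1}) placing $\xi_{(q)}\in L_{q'}[0,\,1]$ and $\eta_{(p')},\,\eta'_{(p')}\in L_p[0,\,1]$. The only case needing separate attention is the base case $m=1$, where $x_0$ does not arise from a normalized $y$; it is handled above through the special structure $u=\pm1$. Everything else reduces to the bookkeeping of H\"older's inequality and the normalization (\ref{gymlp1}).
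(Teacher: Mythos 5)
Your proof is correct and is essentially the paper's own argument recast in operator notation: the paper likewise establishes $\langle T\varphi,\psi\rangle=\langle\varphi,T^{\ast}\psi\rangle$ (via Fubini, with the same integrability chain (\ref{xm1})) and then applies H\"older to exactly your two pairings $\langle\eta,\eta'_{(p')}\rangle$ and $\langle\zeta,\xi_{(q)}\rangle$, combining them in the same way to pass from $\|x_{m-1}\|_{L_{q,v}}\le\theta_{m-1}^{-1}$ to $\theta_{m-1}^{-1}\le\|x_m\|_{L_{q,v}}$. The only differences are cosmetic: you work with $\xi$, $\eta$, $\zeta$ and the adjoint relation directly, thereby avoiding the sign factors $(-1)^{r-k}$ attached to $x^{(r)}_m/g$, and you spell out the base case $m=1$ that the paper leaves implicit.
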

\begin{proof}
The arguments are almost similar as in \cite{busl_tikh}, but
instead of integration by parts we apply the Fubini theorem.
Everywhere we denote $x_j=x_j(\cdot, u)$, $\theta_j=\theta_j(u)$.
We have
$$
1=\int \limits _0^1 \left| \frac{x_m^{(r)}}{g} \right|^p\, dt
\stackrel{(\ref{xm_u}), (\ref{gymlp1})}{=} \int \limits _0^1
\frac{x_m^{(r)}}{g} \left(\frac{x_m^{(r)}}{g}\right)_{(p)}\, dt
\stackrel{(\ref{xm_u})}{=} \int \limits _0^1 (-1)^{r-k}
\frac{x_m^{(r)}}{g} gy_m\, dt=:I.
$$

Let
\begin{align}
\label{phi_psi} \varphi =\frac{x_m^{(r)}}{g}
\stackrel{(\ref{xm_u})}{=} (gy_m)_{(p')}, \quad \psi =
v^{q-1}[x_{m-1}]_{(q)}.
\end{align}
Then by (\ref{xm1}) we get
$$
\varphi \in L_p[0, \, 1], \;\; \psi \in L_{q'}[0, \, 1], \;\; gy_m
\in L_{p'}[0, \, 1],
$$
$$
I \stackrel{(\ref{ym_u})}{=} [\theta_{m-1}]^q\int \limits _0^1
\varphi \cdot \tilde I^{r-k}_{r,v,g}\psi
\,dt\stackrel{(\ref{tirmu1}), (\ref{tir1})}{=}
$$
$$
=\frac{[\theta_{m-1}]^q}{(k-1)!(r-k-1)!}\int \limits _0^1
\varphi(t) g(t) \int _0^t (t-s)^{r-k-1}\int \limits
_s^1(\tau-s)^{k-1} v(\tau) \psi(\tau)\, d\tau\, ds\, dt=
$$
$$
=\frac{[\theta_{m-1}]^q}{(k-1)!(r-k-1)!}\int \limits _0^1
\psi(\tau) v(\tau) \int \limits_0^\tau (\tau-s)^{k-1} \int \limits
_s^1(t-s)^{r-k-1}g(t)\varphi(t)\, dt\, ds\, d\tau
\stackrel{(\ref{tirmu1}), (\ref{tir1})}{=}
$$
$$
=[\theta_{m-1}]^q\int \limits _0^1 \psi\cdot \tilde
I^k_{r,g,v}\varphi \, d\tau \stackrel{(\ref{xm_u}),
(\ref{phi_psi})}{=} [\theta_{m-1}]^q\int \limits_0^1
v^{q-1}[x_{m-1}]_{(q)} \cdot vx_m\, dt\le
$$
$$
\le [\theta_{m-1}]^q \|x_{m-1}\|_{L_{q,v}[0, \, 1]}^{q-1}
\|x_m\|_{L_{q,v}[0, \, 1]}.
$$
Thus,
\begin{align}
\label{1le} 1\le [\theta_{m-1}]^q \|x_{m-1}\|_{L_{q,v}[0, \,
1]}^{q-1} \|x_m\|_{L_{q,v}[0, \, 1]}.
\end{align}

Further,
$$
1 \stackrel{(\ref{xm_u}), (\ref{gymlp1})}{=}\left\|
\frac{x^{(r)}_{m-1}}{g}\right\| _{L_p[0, \, 1]} \left\|
\frac{x^{(r)}_m}{g} \right\|^{p-1}_{L_p[0, \, 1]} \ge \int \limits
_0^1 \left( \frac{x_m^{(r)}}{g}\right)_{(p)}
\frac{x_{m-1}^{(r)}}{g} \, dt \stackrel{(\ref{xm_u})}{=} \int
\limits_0^1 gy_m \frac{x_{m-1}^{(r)}}{g}\, dt=:J.
$$
Applying (\ref{ym_u}) together with the Fubini theorem, we get
that
$$
J=[\theta_{m-1}]^q \int \limits_0^1 v^{q-1}[x_{m-1}]_{(q)} \cdot
\tilde I^k_{r,g,v} \frac{x_{m-1}^{(r)}}{g}\, dt=[\theta_{m-1}]^q
\|x_{m-1}\|^q_{L_{q,v}[0, \, 1]}.
$$
Hence, $\|x_{m-1}\|_{L_{q,v}[0, \, 1]} \le [\theta_{m-1}]^{-1}$.
This together with (\ref{1le}) yields that $[\theta_{m-1}]^{-1}\le
\|x_m\|_{L_{q,v}[0, \, 1]}$.
\end{proof}
Repeating arguments from \cite{busl_tikh} (see Lemma 2 from
Section 6 and its corollary, as well as Lemma 1 from Section 8),
we get the following assertions.
\begin{enumerate}
\item One can choose a subsequence of $\{x_m(\cdot, \, u)\}_{m\in \N}$
convergent to a spectral function in the space $L_{q,v}[0, \, 1]$.
\item The set $SP_0$ is non-empty.
\item The estimates (\ref{dn_low_est}) hold.
\end{enumerate}

Now we prove the inequality
\begin{align}
\label{dn_up_est} \lambda_n(W^{r,k}_{p,g}[0, \, 1], \, L_{q,v} [0,
\, 1]) \le \tilde \theta_n^{-1}.
\end{align}
From (\ref{bndnln}) it follows that the same inequality holds for
Kolmogorov and Gelfand widths.

Similarly as in \cite[p. 367]{vas_spn} we prove that the set
$\widetilde{sp}_n$ is closed.

Let $(\overline{x}, \, \overline{y}, \, \overline{\theta}) \in
SP_m$, $0\le m\le n$. We show that
\begin{align}
\label{up_est} \lambda_n(W^{r,k}_{p,g}[0, \, 1], \, L_{q,v}[0, \,
1])\le \overline{\theta}^{\, -1}.
\end{align}
Since the operator $\tilde I^k_{r,g,v}: L_p[0, \, 1] \rightarrow
L_q[0, \, 1]$ is compact, by (\ref{w_rk_pg_def}) and Heinrich's
results (\ref{heinr_result}) it is sufficient to prove that
$$
a_n(\tilde I^k_{r,g,v}: L_p[0, \, 1] \rightarrow L_q[0, \,
1])\le \overline{\theta}^{\, -1}
$$
(recall that $a_n$ are the approximation numbers).

Applying the Rolle theorem and repeating arguments from
Proposition 1 in \cite{vas_spn} (see also \cite[p.
1594]{busl_tikh}), we obtain the following assertion.
\begin{Sta}
\label{per_zn} Let $(\overline{x}, \, \overline{y}, \,
\overline{\theta}) \in SP_m$, $m\in \Z_+$. Then the following
assertions hold.
\begin{enumerate}
\item The functions $\overline{x}$, $\overline{y}$ and their derivatives $\overline{x}^{(j)}$,
$\overline{y}^{(j)}$, $1\le j\le r-1$, have exactly $m$ zeros on
$(0, \, 1)$.
\item Let
$$
0<\xi_1<\dots <\xi_m<1, \quad 0<\eta_1<\dots <\eta_m<1, \quad
\overline{x}(\xi_i)=0, \quad \overline{y}(\eta_i)=0, \quad 1\le
i\le m.
$$
Then $\xi_i$ are points of sign change of the function
$\overline{x}$, $\eta_i$ are points of sign change of the function
$\overline{y}$.
\item We have $\dot{\overline{x}}(\xi_i)\ne 0$, $\dot{\overline{y}}(\eta_i)\ne
0$, $1\le i\le m$.
\end{enumerate}
\end{Sta}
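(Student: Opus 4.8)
Throughout write $Z(f)$ for the number of zeros of $f$ in $(0,1)$ counted with multiplicity and recall that $S^-(f)$ denotes its number of points of sign change; always $S^-(f)\le Z(f)$. The plan is to first pin down the zeros of $\overline{x}$ and $\overline{y}$ themselves, and only then propagate the count to the derivatives by the Rolle theorem. I start by reading off from (\ref{eq_main}) the coupling of the zero sets: since $g,v>0$ a.e. and $h_{(\sigma)}$ vanishes exactly where $h$ does, the first equation gives $\sgn\overline{x}^{(r)}=(-1)^{r-k}\sgn\overline{y}$, so $\overline{x}^{(r)}$ and $\overline{y}$ have the same zeros and the same points of sign change, and the second equation does the same for $\overline{y}^{(r)}$ and $\overline{x}$. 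In the integral form (\ref{eq_main_int}) this reads $\overline{x}=\tilde I^k_{r,g,1}[(g\overline{y})_{(p')}]$ and $\overline{y}=\overline{\theta}^{\,q}\tilde I^{r-k}_{r,v,1}[(v\overline{x})_{(q)}]$, where $(g\overline{y})_{(p')}$ has the same sign pattern as $\overline{y}$ and $(v\overline{x})_{(q)}$ the same as $\overline{x}$.

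The core step is identical to Proposition 1 of \cite{vas_spn} and rests on the total positivity (variation--diminishing property) of the composed Riemann--Liouville kernels of $\tilde I^k_{r,g,1}$ and $\tilde I^{r-k}_{r,v,1}$ from (\ref{tirmu1}), (\ref{tir1}). It supplies both $S^-(\tilde I f)\le S^-(f)$ and the sharp bound $Z(\tilde I f)\le S^-(f)$. Using these in both directions gives $S^-(\overline{x})\le S^-(\overline{y})$ and $S^-(\overline{y})\le S^-(\overline{x})$; since $(\overline{x},\overline{y},\overline{\theta})\in SP_m$ forces $S^-(\overline{x})=m$, we obtain $S^-(\overline{y})=m$, and then $Z(\overline{x})\le S^-(\overline{y})=m$ and $Z(\overline{y})\le S^-(\overline{x})=m$. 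Together with $Z\ge S^-=m$ this yields $Z(\overline{x})=Z(\overline{y})=m$ with all zeros simple. A simple zero is a point of sign change, so the $m$ zeros of $\overline{x}$ (resp. $\overline{y}$) are exactly the $\xi_i$ (resp. $\eta_i$), and $\dot{\overline{x}}(\xi_i)\ne 0$, $\dot{\overline{y}}(\eta_i)\ne 0$. This proves statements 2 and 3 and the part of statement 1 concerning $\overline{x}$, $\overline{y}$.

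For the intermediate derivatives I would run the monotone Rolle chain
$$
m=Z(\overline{x})\le Z(\dot{\overline{x}})\le\dots\le Z(\overline{x}^{(r-1)})\le Z(\overline{x}^{(r)})=Z(\overline{y})=m,
$$
anchoring each step by the forced boundary zero: for $0\le j\le k-1$ the equality $\overline{x}^{(j)}(a)=0$ adds the left endpoint as a zero before Rolle is applied, and for $k\le j\le r-1$ the equality $\overline{x}^{(j)}(b)=0$ adds the right endpoint, the anchoring endpoint switching precisely at $j=k$. The top equality $Z(\overline{x}^{(r)})=Z(\overline{y})=m$ comes from the coupling above, so the chain is squeezed and every $Z(\overline{x}^{(j)})=m$; moreover the zeros stay simple and distinct, since Rolle applied to $m$ distinct interior zeros together with a forced boundary zero produces $m$ distinct interior zeros at the next level. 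Running the same argument for $\overline{y}$ with $k$ replaced by $r-k$ finishes statement 1.

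The main obstacle is the sharp count $Z\le S^-$ of the second paragraph, i.e. excluding multiple and tangential (non--sign--changing) zeros of $\overline{x}$ and $\overline{y}$. The Rolle theorem alone is insufficient here: an interior double zero is compatible with all the Rolle inequalities as soon as a boundary zero is available, and the cyclic chain only forces $Z(\overline{x})=Z(\overline{y})$, not their equality with $m$. Hence the argument must genuinely invoke the total positivity of the Riemann--Liouville kernels exactly as in \cite{vas_spn} and \cite[p.~1594]{busl_tikh}. By contrast, once $Z(\overline{x})=Z(\overline{y})=m$ with simple zeros is in hand, the derivative count is a routine Rolle squeezing, the only genuinely new element being the correct placement of the endpoint anchors dictated by the split boundary conditions $\overline{x}(a)=\dots=\overline{x}^{(k-1)}(a)=\overline{x}^{(k)}(b)=\dots=\overline{x}^{(r-1)}(b)=0$.
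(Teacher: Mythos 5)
Your plan inverts the logic of the paper's proof, and the one step you lean on is precisely the step you have not proved. The paper's argument (the one it imports from Proposition 1 of \cite{vas_spn} and p.~1594 of \cite{busl_tikh}) is the cyclic Rolle count that you declare insufficient; your replacement for it is a black box. Concretely: the sharp variation-diminishing bound $Z(\tilde I f)\le S^-(f)$, with zeros counted with multiplicity, is not a citable consequence of ``total positivity'' for these kernels, because they are totally nonnegative but \emph{not} strictly totally positive. For $r=2$, $k=1$ the kernel of $\tilde I^k_{r,g,1}$ is $\min\{t,\tau\}\,g(\tau)$ (see (\ref{httt})), and already its $2\times 2$ minors vanish when both $t_1<t_2$ lie to the left of both $\tau_1<\tau_2$; this is exactly what Lemma \ref{lem_kg0} records in general --- the minors of $K_{r,k}$ are positive only under the alternation condition (\ref{nu_jkr}). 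For merely totally nonnegative kernels the classical theory gives $S^-(Tf)\le S^-(f)$, not the multiplicity-counting bound you need to conclude simplicity of zeros (assertion 3). That stronger bound is essentially equivalent to the proposition itself, so your ``core step'' begs the question; it is also a misattribution, since Proposition 1 of \cite{vas_spn} is proved by the Rolle argument, not by variation diminishing.

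Second, your stated reason for abandoning the Rolle route --- ``an interior double zero is compatible with all the Rolle inequalities'', ``the cyclic chain only forces $Z(\overline{x})=Z(\overline{y})$, not their equality with $m$'' --- is incorrect. Suppose $\overline{x}$ has, besides its $m$ sign changes, a tangential zero $\xi^*$. Since $\overline{x}$ is $C^1$ on $(0,\,1)$ and does not change sign at $\xi^*$, also $\dot{\overline{x}}(\xi^*)=0$, and $\xi^*$ is distinct from the Rolle zeros lying strictly between consecutive zeros of $\overline{x}$; hence $\dot{\overline{x}}$ has at least $m+2$ interior zeros (a multiple sign-changing zero gives at least $m+1$ by the same bookkeeping). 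The anchors $\overline{x}^{(j)}(0)=0$ for $j\le k-1$ and $\overline{x}^{(j)}(1)=0$ for $j\ge k$ preserve this excess down to $\overline{x}^{(r-1)}$; between consecutive zeros of $\overline{x}^{(r-1)}$ the derivative $\overline{x}^{(r)}$ integrates to zero, so it changes sign in the measure-theoretic sense on each gap, giving $S^-(\overline{x}^{(r)})\ge m+2$, i.e. $S^-(\overline{y})\ge m+2$ by the first equation of (\ref{eq_main}). Running the same anchored chain through $\overline{y}$ and the second equation returns $S^-(\overline{x})\ge m+2>m$, contradicting $(\overline{x},\,\overline{y},\,\overline{\theta})\in SP_m$. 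Thus the closed cycle pins every count to exactly $m$ and excludes multiple and tangential zeros with no input from total positivity. Your third paragraph (the anchored Rolle squeeze for the intermediate derivatives, with the anchor switching at $j=k$ and with $k$ replaced by $r-k$ for $\overline{y}$) is correct and coincides with the paper's; the gap is that its input, $Z(\overline{x})=Z(\overline{y})=m$ with simple zeros, rests in your write-up on an unproved lemma, whereas the paper obtains it from the very Rolle cycle you discarded.
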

Now we obtain the alternation condition for points $\xi_i$,
$\eta_i$.
\begin{Sta}
\label{peremez1} Let the points $\xi_i$, $\eta_i$ $(1\le i\le m)$
be such as in Proposition \ref{per_zn}. Then
\begin{align}
\label{xi_j_eta} \eta_{j+k-r}<\xi_j<\eta_{j+k}.
\end{align}
\end{Sta}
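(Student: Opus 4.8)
The plan is to reduce the alternation (\ref{xi_j_eta}) to a pair of interlacing chains obtained by differentiating $x$ and applying Rolle's theorem, using the sign relation in (\ref{eq_main}) only to identify the zeros of the top derivative with the $\eta_i$. First I would record that, by the first equation of (\ref{eq_main}), $\sgn x^{(r)}=(-1)^{r-k}\sgn y$, since $g>0$ and $y_{(p')}$ vanishes exactly where $y$ does; hence $x^{(r)}$ changes sign precisely at the points $\eta_1<\dots<\eta_m$. Writing $\xi^{(j)}_1<\dots<\xi^{(j)}_m$ for the zeros of $x^{(j)}$ in $(0,1)$ (so that $\xi^{(0)}_i=\xi_i$ and $\xi^{(r)}_i=\eta_i$), Proposition \ref{per_zn} guarantees that each of $x,x',\dots,x^{(r-1)}$ --- and, through $y$, also $x^{(r)}$ --- has exactly $m$ such zeros. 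The boundary conditions give $x^{(j)}(0)=0$ for $0\le j\le k-1$ and $x^{(j)}(1)=0$ for $k\le j\le r-1$, so the differentiation splits into two regimes.

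Next I would build the two Rolle chains. For $0\le j\le k-1$ the function $x^{(j)}$ vanishes at $0$ and at $\xi^{(j)}_1,\dots,\xi^{(j)}_m$; Rolle's theorem places a zero of $x^{(j+1)}$ in each of the $m$ intervals $(\xi^{(j)}_{i-1},\xi^{(j)}_i)$ (with $\xi^{(j)}_0:=0$), and since $x^{(j+1)}$ has exactly $m$ zeros these are all of them, whence $\xi^{(j)}_{i-1}<\xi^{(j+1)}_i<\xi^{(j)}_i$. For $k\le j\le r-1$ the function $x^{(j)}$ vanishes at $\xi^{(j)}_1,\dots,\xi^{(j)}_m$ and at $1$, and the same counting forces $\xi^{(j)}_i<\xi^{(j+1)}_i<\xi^{(j)}_{i+1}$ (with $\xi^{(j)}_{m+1}:=1$). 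At the last step $j=r-1$ the derivative $x^{(r)}$ exists only almost everywhere, so there I would use the generalized form of Rolle's theorem: between two zeros of the locally absolutely continuous function $x^{(r-1)}$ the integrable function $x^{(r)}$ must change sign, and these sign changes are exactly the $\eta_i$. The exactness of the count in Proposition \ref{per_zn} is what upgrades the one-sided Rolle inequalities into genuine interlacing and, incidentally, reproves that all the zeros are simple.

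Finally I would compose the two chains, tracking the index drift. From the first chain $\xi^{(j+1)}_i<\xi^{(j)}_i$ and $\xi^{(j+1)}_i>\xi^{(j)}_{i-1}$; from the second $\xi^{(j+1)}_i>\xi^{(j)}_i$ and $\xi^{(j+1)}_i<\xi^{(j)}_{i+1}$. Chaining the lower bounds through levels $r\to k$ (index fixed) and then $k\to 0$ (index dropping by one at each step) gives $\eta_i=\xi^{(r)}_i>\xi^{(k)}_i>\xi^{(0)}_{i-k}=\xi_{i-k}$; chaining the upper bounds through $r\to k$ (index rising by one each step) and then $k\to 0$ (index fixed) gives $\eta_i<\xi^{(k)}_{i+r-k}<\xi_{i+r-k}$. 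Thus $\xi_{i-k}<\eta_i<\xi_{i+r-k}$, and substituting $i=j+k$ in the left inequality and $i=j+k-r$ in the right one yields $\eta_{j+k-r}<\xi_j<\eta_{j+k}$, with the usual conventions $\xi_l=\eta_l=0$ for $l\le 0$ and $=1$ for $l>m$ making the degenerate-index cases trivial. The main obstacle is precisely this bookkeeping: the two chains tilt in opposite directions because their boundary zeros sit at opposite endpoints, so one must keep careful track of which bound preserves the index and which shifts it, and handle the almost-everywhere top step through sign changes rather than pointwise Rolle.
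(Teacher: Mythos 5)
Your proof is correct and follows essentially the same route as the paper: both arguments run Rolle chains through the successive derivatives, using the boundary zeros at $0$ for orders below $k$ and at $1$ for orders from $k$ to $r-1$, identify the sign changes of $\overline{x}^{(r)}$ with the zeros of $\overline{y}$ via the first equation of (\ref{eq_main}), and then compose the two resulting alternations. The paper merely compresses your level-by-level bookkeeping by pivoting through the single intermediate family $\mu_j$ of zeros of $\overline{x}^{(k)}$, obtaining $\mu_j<\xi_j<\mu_{j+k}$ and $\mu_j<\eta_j<\mu_{j+r-k}$ and combining them exactly as you do.
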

\begin{proof}
From the Rolle theorem and the condition $\overline{x}(0)=\dots
=\overline{x}^{(k-1)}(0)=0$ it follows that the function
$\overline{x}^{(k)}$ has $m$ zeroes $0<\mu_1<\dots <\mu_m<1$ on
the interval $(0, \, 1)$; moreover, the alternation condition
$\mu_j<\xi_j<\mu_{j+k}$ holds. By (\ref{eq_main}), the points of
sign change of the function $\overline{x}^{(r)}$ coincide with
zeroes of the function $\overline{y}$ on $(0, \, 1)$. By the Rolle
theorem and the condition $\overline{x}^{(k)}(1)=\dots
=\overline{x}^{(r-1)}(1)=0$, the alternation condition
$\mu_j<\eta_j<\mu_{j+r-k}$ holds. Hence,
$\xi_j<\mu_{j+k}<\eta_{j+k}$, $\xi_j>\mu_j>\eta_{j+k-r}$. This
completes the proof of (\ref{xi_j_eta}).
\end{proof}

{\bf Definition of the kernel $G(t, \, \tau, \, \xi, \, \eta)$.}
Let $\mu=\{\mu_j\}_{j=1}^J\subset [0, \, 1]$,
$\nu=\{\nu_j\}_{j=1}^J\subset [0, \, 1]$, $l\in \N$. We set
\begin{align}
\label{kl_munu} K_l(\mu, \, \nu) = \begin{vmatrix}
(\mu_1-\nu_1)_+^{l-1} & (\mu_2-\nu_1)_+^{l-1} & \dots &
(\mu_J-\nu_1)_+^{l-1}
\\ (\mu_1-\nu_2)_+^{l-1} & (\mu_2-\nu_2)_+^{l-1} & \dots &
(\mu_J-\nu_2)_+ ^{l-1} \\ \dots & \dots & \dots & \dots
\\ (\mu_1-\nu_J)_+^{l-1} & (\mu_2-\nu_J)_+^{l-1} &
\dots & (\mu_J-\nu_J)_+^{l-1}\end{vmatrix}
\end{align}
(if $l=1$, then we set $x_+^0=0$ for $x<0$ and $x_+^0=1$ for $x\ge
0$). In \cite[Lemma 9.2]{karlin_studden} it was proved that if
$\mu_1<\dots <\mu_J$, $\nu_1< \dots <\nu_J$, then $K_l(\mu, \,
\nu)\ge 0$; in addition, if the alternation condition
$\nu_j<\mu_j<\nu_{j+l}$ holds, then $K_l(\mu, \, \nu)>0$.

Let now $r\ge 2$, $1\le k\le r-1$, $\mu=\{\mu_j\}_{j=1}^J \subset
[0, \, 1]$, $\nu=\{\nu_j\}_{j=1}^J\subset [0, \, 1]$. We set
\begin{align}
\label{krk} K_{r,k}(\mu, \, \nu) = \int \limits _{[0, \, 1]^J}
K_k(\mu, \, \alpha) K_{r-k}(\nu, \, \alpha)\, d\alpha,
\end{align}
where $\alpha=(\alpha_1, \, \dots , \, \alpha_J)$.
\begin{Lem}
\label{lem_kg0} Let $0<\mu_1<\dots <\mu_J<1$, $0<\nu_1< \dots
<\nu_J<1$. Then $K_{r,k}(\mu, \, \nu)\ge 0$. In addition, if the
alternation condition
\begin{align}
\label{nu_jkr} \nu_{j+k-r}<\mu _j < \nu_{j+k},
\end{align}
holds, then $K_{r,k}(\mu, \, \nu)>0$.
\end{Lem}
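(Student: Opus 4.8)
The plan is to reduce the statement to the two sign facts about the single determinants $K_k$ and $K_{r-k}$ recalled after (\ref{kl_munu}). First I would note that, as a function of $\alpha=(\alpha_1,\dots,\alpha_J)$, each factor $K_k(\mu,\alpha)$ and $K_{r-k}(\nu,\alpha)$ is alternating, since a transposition of two $\alpha_i$ interchanges two rows of the determinant; hence their product $K_k(\mu,\alpha)K_{r-k}(\nu,\alpha)$ is a symmetric function of $\alpha$. Consequently the integral in (\ref{krk}) over the cube $[0,\,1]^J$ equals $J!$ times the integral over the open ordered simplex $\Delta=\{0<\alpha_1<\dots<\alpha_J<1\}$, the complement having measure zero. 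On $\Delta$ both $\mu$ and $\alpha$, as well as both $\nu$ and $\alpha$, are strictly increasing, so the cited inequality of Karlin and Studden \cite{karlin_studden} gives $K_k(\mu,\alpha)\ge 0$ and $K_{r-k}(\nu,\alpha)\ge 0$ there. The integrand is therefore nonnegative on $\Delta$, which yields $K_{r,k}(\mu,\nu)\ge 0$ at once, using only that $\mu$ and $\nu$ are increasing.

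For the strict inequality it suffices to exhibit a subset of $\Delta$ of positive measure on which both determinants are strictly positive. By the strict part of the Karlin--Studden result, $K_k(\mu,\alpha)>0$ whenever $\alpha_j<\mu_j<\alpha_{j+k}$ for all $j$, and $K_{r-k}(\nu,\alpha)>0$ whenever $\alpha_j<\nu_j<\alpha_{j+r-k}$ for all $j$, with the usual convention that constraints whose index falls outside $\{1,\,\dots,\,J\}$ are vacuous. Rewriting each upper constraint as a lower bound on the relevant $\alpha_i$, both families of inequalities together amount to requiring $\alpha_j\in I_j:=(L_j,\,R_j)$ for every $j$, where $L_j=\max(\mu_{j-k},\,\nu_{j-r+k})$ and $R_j=\min(\mu_j,\,\nu_j)$, with the convention $\mu_s=\nu_s=0$ for $s\le 0$. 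Thus the task becomes to choose $\alpha_1<\dots<\alpha_J$ with $\alpha_j\in I_j$; such a point lies automatically in $\Delta$, since $R_J=\min(\mu_J,\,\nu_J)<1$.

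The crux is the non-emptiness of each $I_j$, and this is exactly where hypothesis (\ref{nu_jkr}) enters. The four inequalities $\mu_{j-k}<\mu_j$, $\nu_{j-r+k}<\nu_j$, $\mu_{j-k}<\nu_j$ and $\nu_{j-r+k}<\mu_j$ hold because $\mu$ and $\nu$ are increasing and because applying (\ref{nu_jkr}) at indices $j-k$ and $j$ gives $\mu_{j-k}<\nu_j$ and $\nu_{j+k-r}<\mu_j$ respectively; hence $L_j<R_j$. Since $L_j$ and $R_j$ are both non-decreasing in $j$, a greedy choice works: having picked $\alpha_{j-1}<R_{j-1}\le R_j$, the interval $(\max(\alpha_{j-1},\,L_j),\,R_j)$ is non-empty, because its left endpoint is strictly below $R_j$ (using $\alpha_{j-1}<R_{j-1}\le R_j$ and $L_j<R_j$), so $\alpha_j$ can be selected in it. The resulting point lies in an open box contained in $\Delta$ on which both alternation conditions hold strictly, so both determinants are positive there, and by continuity this persists on a neighbourhood of positive measure, giving $K_{r,k}(\mu,\nu)>0$. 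I expect the only genuinely delicate step to be the verification that the $I_j$ are non-empty, as that is the step that actually consumes the alternation hypothesis (\ref{nu_jkr}); the monotonicity of $L_j,\,R_j$ and the greedy selection are then routine.
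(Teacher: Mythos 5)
Your proposal is correct and takes essentially the same approach as the paper: nonnegativity rests on the cancellation of the two alternating signs (the paper sorts $\alpha$ pointwise and cancels $(-1)^\sigma$ factors where you symmetrize the product onto the ordered simplex), and positivity rests on constructing an increasing $\alpha$ that interlaces both $\mu$ and $\nu$, consuming the hypothesis (\ref{nu_jkr}) through exactly the same four inequalities. The paper's explicit choice $\alpha_j=\min\{\mu_j,\,\nu_j\}-\varepsilon$ is just a particular point near the right endpoints of your intervals $I_j$, so your greedy selection is a cosmetic variant of the same construction.
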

\begin{proof}
There exists a rearrangement $\sigma\in S_J$ such that
$\alpha_{\sigma(1)}\le \dots \le \alpha _{\sigma(J)}$. We set
$\tilde \alpha =(\alpha_{\sigma(1)}, \, \dots, \, \alpha
_{\sigma(J)})$. Then $K_k(\mu, \, \tilde\alpha)\ge 0$,
$K_{r-k}(\nu, \, \tilde\alpha)\ge 0$. Rearranging lines in the
determinant, we obtain that
\begin{align}
\label{k_k_mu} K_k(\mu, \, \alpha) K_{r-k}(\nu, \,
\alpha)=(-1)^\sigma K_k(\mu, \, \tilde\alpha)\cdot (-1)^\sigma
K_{r-k}(\nu, \, \tilde\alpha)= K_k(\mu, \, \tilde\alpha)\cdot
K_{r-k}(\nu, \, \tilde\alpha) \ge 0.
\end{align}
It remains to integrate over $\alpha$.

Now let the alternation condition hold. We show that there exists
a sequence $0<\alpha_1<\dots <\alpha_J<1$ such that
$\alpha_j<\mu_j<\alpha_{j+k}$, $\alpha_j<\nu_j<\alpha_{j+r-k}$.
This together with (\ref{k_k_mu}) implies that $K_{r,k}(\mu, \,
\nu)>0$.

We take a sufficiently small $\varepsilon>0$ and set $\alpha_j
=\min\{\mu_j, \, \nu_j\} -\varepsilon$. Since $\mu_j<\mu_{j+1}$,
$\nu_j<\nu_{j+1}$, we have $\alpha_j<\alpha_{j+1}$ for small
$\varepsilon>0$. The inequalities $\alpha_j<\mu_j$,
$\alpha_j<\nu_j$ hold by construction. We show that
$\mu_j<\alpha_{j+k}$ and $\nu_j<\alpha_{j+r-k}$. To this end, it
is sufficient to check the inequalities $\mu_j<\min\{\mu_{j+k}, \,
\nu_{j+k}\}$, $\nu_j<\min\{\mu_{j+r-k}, \, \nu_{j+r-k}\}$. They
hold by the strict monotonicity of $\{\mu_i\}_{i=1}^J$,
$\{\nu_i\}_{i=1}^J$ and the alternation condition (\ref{nu_jkr}).
\end{proof}
\begin{Lem}
\label{det} Let $\varphi, \, \psi\in L_\infty([0, \, 1]^2)$. For
$\mu=\{\mu_j\}_{j=1}^J$, $\nu=\{\nu_j\}_{j=1}^J$,
$\alpha=\{\alpha_j\}_{j=1}^J \subset [0, \, 1]$ we set
$$
\Phi(\mu, \, \alpha) =\begin{vmatrix} \varphi(\mu_1, \, \alpha_1)
& \varphi(\mu_2, \, \alpha_1) & \dots & \varphi(\mu_J, \,
\alpha_1)
\\ \varphi(\mu_1, \, \alpha_2)
& \varphi(\mu_2, \, \alpha_2) & \dots & \varphi(\mu_J, \,
\alpha_2) \\ \dots & \dots & \dots & \dots \\ \varphi(\mu_1, \,
\alpha_J) & \varphi(\mu_2, \, \alpha_J) & \dots & \varphi(\mu_J,
\, \alpha_J)
\end{vmatrix},
$$
$$
\Psi(\nu, \, \alpha) =\begin{vmatrix} \psi(\nu_1, \, \alpha_1) &
\psi(\nu_2, \, \alpha_1) & \dots & \psi(\nu_J, \, \alpha_1)
\\ \psi(\nu_1, \, \alpha_2)
& \psi(\nu_2, \, \alpha_2) & \dots & \psi(\nu_J, \, \alpha_2) \\
\dots & \dots & \dots & \dots \\ \psi(\nu_1, \, \alpha_J) &
\psi(\nu_2, \, \alpha_J) & \dots & \psi(\nu_J, \, \alpha_J)
\end{vmatrix},
$$
$$
\Lambda(\mu, \, \nu) =\int \limits _{[0, \, 1]^J}\Phi(\mu, \,
\alpha)\Psi(\nu, \, \alpha)\, d\alpha, \quad H(t, \, \tau) =\int
\limits _0^1 \varphi(t, \, s)\psi(\tau, \, s)\, ds.
$$
Then
$$
\Lambda(\mu, \, \nu)=J!\begin{vmatrix} H(\mu_1, \, \nu_1) &
H(\mu_2, \, \nu_1) & \dots & H(\mu_J, \, \nu_1)
\\ H(\mu_1, \, \nu_2)
& H(\mu_2, \, \nu_2) & \dots & H(\mu_J, \, \nu_2) \\
\dots & \dots & \dots & \dots \\ H(\mu_1, \, \nu_J) & H(\mu_2, \,
\nu_J) & \dots & H(\mu_J, \, \nu_J)
\end{vmatrix}.
$$
\end{Lem}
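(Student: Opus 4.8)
The plan is to recognize this as the continuous Cauchy--Binet (Andréief) identity and to prove it by expanding both determinants with the Leibniz formula, integrating term by term, and then collapsing the resulting double sum over permutations. First I would fix $\mu$ and $\nu$ and abbreviate $f_j(s)=\varphi(\mu_j,\,s)$, $g_j(s)=\psi(\nu_j,\,s)$, so that $\Phi(\mu,\,\alpha)=\det[f_j(\alpha_i)]_{i,j=1}^J$, $\Psi(\nu,\,\alpha)=\det[g_j(\alpha_i)]_{i,j=1}^J$, and $H(\mu_j,\,\nu_k)=\int_0^1 f_j(s)g_k(s)\,ds$.

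Next I would apply the Leibniz expansion to each determinant,
$$\Phi(\mu,\,\alpha)=\sum_{\sigma\in S_J}(-1)^\sigma\prod_{i=1}^J f_{\sigma(i)}(\alpha_i), \qquad \Psi(\nu,\,\alpha)=\sum_{\tau\in S_J}(-1)^\tau\prod_{i=1}^J g_{\tau(i)}(\alpha_i),$$
multiply the two sums, and integrate over $[0,\,1]^J$. Since $\varphi,\,\psi\in L_\infty([0,\,1]^2)$ and the cube has finite measure, the integrand is bounded, so Fubini's theorem lets the integral factor over the coordinates $\alpha_1,\,\dots,\,\alpha_J$ and each factor is exactly an $H$-value; this yields
$$\Lambda(\mu,\,\nu)=\sum_{\sigma,\tau\in S_J}(-1)^\sigma(-1)^\tau\prod_{i=1}^J H(\mu_{\sigma(i)},\,\nu_{\tau(i)}).$$

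The decisive step is the reindexing of this double sum, and it is the only point demanding care. Writing $\tau=\rho\sigma$ with $\rho\in S_J$, one has $(-1)^\sigma(-1)^\tau=(-1)^\rho$, and the substitution $j=\sigma(i)$ turns the product into $\prod_{j=1}^J H(\mu_j,\,\nu_{\rho(j)})$, which no longer depends on $\sigma$. Here I would verify that $j\mapsto\sigma(i)$ is a genuine bijection of $\{1,\,\dots,\,J\}$ and that $\tau(i)=\rho(\sigma(i))=\rho(j)$, so that the inner sum over $\rho$ reconstructs $\det[H(\mu_j,\,\nu_k)]_{j,k=1}^J$; the remaining sum over the $J!$ choices of $\sigma$ then supplies the factor $J!$. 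Finally, since a matrix and its transpose have equal determinants, $\det[H(\mu_j,\,\nu_k)]_{j,k}$ coincides with the determinant written in the statement, which completes the proof. Beyond the boundedness needed for Fubini there is no analytic obstacle; the whole content is this combinatorial bookkeeping.
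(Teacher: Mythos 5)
Your proof is correct and follows essentially the same route as the paper's: Leibniz expansion of both determinants, Fubini to factor the integral over $[0,\,1]^J$ into a product of one-dimensional integrals giving $H$-values, and the reindexing $\rho=\tau\sigma^{-1}$ (the paper's $\pi\circ\sigma^{-1}$) to collapse the double sum into $J!$ times the determinant. The only cosmetic difference is that you make the appeal to Fubini and the transpose-invariance of the determinant explicit, which the paper leaves tacit.
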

The assertion is a consequence of the formula (3.12) in
\cite{karlin_studden}. For convenience, we give the proof.
\begin{proof}
We have
$$
\Lambda(\mu, \, \nu)=\sum \limits _{\sigma\in S_J} \sum \limits
_{\pi\in S_J} (-1)^{\sigma+\pi} \int \limits _{[0, \, 1]^J} \prod
_{i=1}^J\varphi(\mu_{\sigma(i)}, \, \alpha_i ) \prod _{j=1}^J
\psi(\nu_{\pi(j)}, \, \alpha _j)\, d\alpha_1\dots d\alpha_J=
$$
$$
=\sum \limits _{\sigma\in S_J} \sum \limits _{\pi\in S_J}
(-1)^{\sigma+\pi} \prod _{j=1}^J \left(\int \limits_0^1
\varphi(\mu_{\sigma(j)}, \, \alpha_j)\psi(\nu_{\pi(j)}, \, \alpha
_j)\, d\alpha_j\right)=
$$
$$
=\sum \limits _{\sigma\in S_J} \sum \limits _{\pi\in S_J}
(-1)^{\sigma+\pi} \prod _{j=1}^J H(\mu_{\sigma(j)}, \,
\nu_{\pi(j)})=\sum \limits _{\sigma\in S_J} \sum \limits _{\pi\in
S_J}(-1)^{\sigma+\pi}\prod _{i=1}^J H(\mu_i, \,
\nu_{\pi(\sigma^{-1}(i))})=
$$
$$
=\sum \limits _{\sigma\in S_J} \sum \limits _{\rho\in
S_J}(-1)^{\rho}\prod _{i=1}^J H(\mu_i, \, \nu_{\rho(i)}) =J! \det
(H(\mu_i, \, \nu_j))_{1\le i,j\le J}.
$$
This completes the proof.
\end{proof}

Now let $m\in \Z_+$, $0<\xi_1<\dots <\xi_m<1$, $0<\eta_1<\dots
<\eta_m<1$, and let the alternation condition
$\eta_{j+k-r}<\xi_j<\eta_{j+k}$ hold. For $t\in [0, \, 1]$, $\tau
\in [0, \, 1]$ we set
\begin{align}
\label{gtt_xi} G(t, \, \tau, \, \xi, \, \eta)
=\frac{1}{(k-1)!(r-k-1)!m} \frac{K_{r,k}(t, \, \xi_1, \, \dots, \,
\xi_m, \, \tau, \, \eta_1, \, \dots, \, \eta_m)} {K_{r,k}(\xi_1,
\, \dots, \, \xi_m, \, \eta_1, \, \dots, \, \eta_m)}.
\end{align}
By Lemma \ref{lem_kg0} and the alternation condition, the
denominator is strict positive. We denote
\begin{align}
\label{htt} H(t, \, \tau) =\int \limits_0^1
(t-s)_+^{k-1}(\tau-s)_+^{r-k-1}\, ds.
\end{align}
Then for a fixed $t\in (0, \, 1)$
\begin{align}
\label{h_est} H(t, \, \tau) \underset{r,k,t}{\asymp} 1
\quad\text{if } \tau\text{ is in a left neighborhood of }1,
\end{align}
\begin{align}
\label{h_est1} H(t, \, \tau) \underset{r,k,t}{\asymp} \tau^{r-k}
\quad \text{if }\tau\text{ is in a right neighborhood of }0.
\end{align}

\begin{Lem}
\label{g_prop} The following assertions hold.
\begin{enumerate}
\item For any $t\in [0, \, 1]$, $\tau\in [0, \, 1]$, $1\le j\le m$ the
equalities $G(\xi_j, \, \tau, \, \xi, \, \eta)=0$ and $G(t, \,
\eta_j, \, \xi, \, \eta)=0$ hold; in particular,
\begin{align}
\label{il01g} \int \limits _0^1 G(\xi_j, \, \tau, \, \xi, \,
\eta)\varphi(\tau)\, d\tau =0, \quad \varphi\in L_1^{{\rm loc}}(0,
\, 1),
\end{align}
\begin{align}
\label{il01g_eta} \int \limits _0^1 G(t, \, \eta_j, \, \xi, \,
\eta)\varphi(\tau)\, d\tau =0, \quad \varphi\in L_1^{{\rm loc}}(0,
\, 1).
\end{align}
\item If $x\in W^{r,k}_{p,g}[0, \, 1]$, then
\begin{align}
\label{xt1rk} x(t) - (-1)^{r-k}\int \limits_0^1 G(t, \, \tau, \,
\xi, \, \eta) x^{(r)}(\tau)\, d\tau =\sum \limits _{j=1}^m c_j(x)
H(t, \, \eta_j),
\end{align}
where $x\mapsto c_j(x)$ are linear continuous functionals on
${\cal W}^{r,k}_{p,g}[0, \, 1]$.
\item Let $(\overline{x}, \, \overline{y}, \, \overline{\theta}) \in
SP_m$, let $0<\xi_1<\dots<\xi_m<1$ be zeroes of $\overline{x}$,
and let $0<\eta_1< \dots <\eta_m<1$ be zeroes of $\overline{y}$.
Then
\begin{align}
\label{ovr_x} \overline{x}(t)=(-1)^{r-k}\int \limits_0^1 G(t, \,
\tau, \, \xi, \, \eta) \overline{x}^{(r)}(\tau)\, d\tau, \quad
\overline{y}(\tau) = (-1)^k\int \limits_0^1 G(t, \, \tau, \, \xi,
\, \eta) \overline{y}^{(r)}(t) \, dt.
\end{align}
In addition,
\begin{align}
\label{xtgt} (-1)^{r-k}\overline{x}(t)G(t, \, \tau, \, \xi, \,
\eta) \overline{x}^{(r)}(\tau) \ge 0.
\end{align}
\end{enumerate}
\end{Lem}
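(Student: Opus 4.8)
The plan is to reduce everything to the kernel $H$ from (\ref{htt}) by means of Lemma \ref{det}. Taking $\varphi(t,s)=(t-s)_+^{k-1}$ and $\psi(\tau,s)=(\tau-s)_+^{r-k-1}$, the determinants $\Phi,\Psi$ of Lemma \ref{det} become exactly $K_k,K_{r-k}$, the function $H$ there coincides with (\ref{htt}), and $\Lambda=K_{r,k}$; hence $K_{r,k}(\mu,\nu)=J!\,\det\bigl(H(\mu_i,\nu_j)\bigr)_{i,j=1}^{J}$ as an algebraic identity for any $\mu,\nu$. For assertion~1 I would substitute $t=\xi_j$ into the numerator of (\ref{gtt_xi}): since $\mu=(t,\xi_1,\dots,\xi_m)$ and $\nu=(\tau,\eta_1,\dots,\eta_m)$, the row indexed by $t$ then coincides with the row indexed by $\xi_j$, so the determinant, and therefore $G(\xi_j,\tau,\xi,\eta)$, vanishes; substituting $\tau=\eta_j$ makes the column indexed by $\tau$ coincide with that indexed by $\eta_j$. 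The integral identities (\ref{il01g}), (\ref{il01g_eta}) then follow by linearity.

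For assertion~2 I would first record the Green representation for the boundary conditions of $W^{r,k}_{p,g}[0,1]$. Using $x^{(k)}(1)=\dots=x^{(r-1)}(1)=0$ to write $x^{(k)}(s)=\frac{(-1)^{r-k}}{(r-k-1)!}\int_s^1(\tau-s)^{r-k-1}x^{(r)}(\tau)\,d\tau$, then $x(0)=\dots=x^{(k-1)}(0)=0$ to write $x(t)=\frac1{(k-1)!}\int_0^t(t-s)^{k-1}x^{(k)}(s)\,ds$, and interchanging the order of integration (Fubini), one obtains $x(t)=\frac{(-1)^{r-k}}{(k-1)!(r-k-1)!}\int_0^1 H(t,\tau)x^{(r)}(\tau)\,d\tau$, the inner kernel being $\int_0^{\min(t,\tau)}(t-s)^{k-1}(\tau-s)^{r-k-1}\,ds=H(t,\tau)$; convergence near $0$ and $1$ is guaranteed by (\ref{gv}) together with (\ref{h_est}), (\ref{h_est1}). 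Expanding the numerator of (\ref{gtt_xi}) along the row indexed by $t$, whose cofactor at the $(t,\tau)$-entry is precisely the denominator determinant, one gets, with the prefactor chosen so that the coefficient of $H(t,\tau)$ equals $1$, that $G(t,\tau,\xi,\eta)=\frac{1}{(k-1)!(r-k-1)!}\bigl(H(t,\tau)+\sum_{j=1}^m b_j(\tau)H(t,\eta_j)\bigr)$ with $b_j$ independent of $t$. Inserting this into $(-1)^{r-k}\int_0^1 G\,x^{(r)}$, applying the representation to the $H(t,\tau)$-term, and collecting the rest as $\sum_j c_j(x)H(t,\eta_j)$ yields (\ref{xt1rk}); continuity of $c_j(x)=\frac{(-1)^{r-k}}{(k-1)!(r-k-1)!}\int_0^1 b_j(\tau)x^{(r)}(\tau)\,d\tau$ on ${\cal W}^{r,k}_{p,g}[0,1]$ follows from Hölder's inequality once one checks, via (\ref{h_est}), (\ref{h_est1}), that $b_j(\tau)\asymp\tau^{r-k}$ near $0$ and is bounded near $1$, so that $b_jg\in L_{p'}[0,1]$ by (\ref{gv}).

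For the reproducing identities (\ref{ovr_x}) I would apply (\ref{xt1rk}) to $\overline x$ and evaluate at $t=\xi_i$: the left side vanishes by $\overline x(\xi_i)=0$ and (\ref{il01g}), giving the homogeneous system $\sum_{j}c_j(\overline x)H(\xi_i,\eta_j)=0$, $1\le i\le m$. Its matrix has determinant $\frac1{m!}K_{r,k}(\xi,\eta)>0$ by Lemma \ref{lem_kg0} and the alternation (\ref{xi_j_eta}), so $c_j(\overline x)=0$ for all $j$ and the first identity follows; the second is the same argument with the roles $k\leftrightarrow r-k$, $\xi\leftrightarrow\eta$, $x\leftrightarrow y$ interchanged, using the symmetry of $H$ and (\ref{il01g_eta}).

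The main obstacle is the sign relation (\ref{xtgt}). I would first determine $\sgn G$: for $t\in(\xi_{i-1},\xi_i)$, $\tau\in(\eta_{l-1},\eta_l)$ (with $\xi_0=\eta_0=0$), reordering $\mu=(t,\xi_1,\dots,\xi_m)$ and $\nu=(\tau,\eta_1,\dots,\eta_m)$ into increasing order permutes the rows and columns of the numerator determinant with signs $(-1)^{i-1}$ and $(-1)^{l-1}$; since the reordered sequences are increasing, Lemma \ref{det} and Lemma \ref{lem_kg0} give $\sgn G(t,\tau,\xi,\eta)=(-1)^{i+l}$ times a nonnegative quantity. Thus on each strip $G$ keeps one sign, and crossing any $\eta_l$ reverses it. By (\ref{eq_main}), $\overline x^{(r)}=(-1)^{r-k}g^{p'}\overline y_{(p')}$ changes sign exactly at the $\eta_l$; hence $(-1)^{r-k}G(t,\tau,\xi,\eta)\overline x^{(r)}(\tau)$ does not change sign as $\tau$ varies. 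By the first identity in (\ref{ovr_x}) its integral over $\tau$ equals $\overline x(t)$, so that constant sign is $\sgn\overline x(t)$; multiplying by $\overline x(t)$ gives (\ref{xtgt}). I expect the delicate point to be the parity bookkeeping, namely verifying that $G(t,\cdot)$ and $\overline x^{(r)}$ flip sign at exactly the same points, which is what lets the reproducing formula fix the global sign without a separate comparison of the leading signs of $\overline x$ and $\overline y$ near the endpoint.
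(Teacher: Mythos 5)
Your treatment of assertions 1 and 2 and of the reproducing identities (\ref{ovr_x}) is correct and is essentially the paper's own proof: the identity $K_{r,k}(\mu,\nu)=J!\det(H(\mu_i,\nu_j))$ via Lemma \ref{det}, vanishing of the determinant by repeated rows/columns, the cofactor expansion along the row indexed by $t$ with the $H(t,\tau)$-term reproducing $x(t)$, continuity of the $c_j$ from (\ref{gv}), (\ref{h_est}), (\ref{h_est1}), and the homogeneous system $\sum_j c_j(\overline x)H(\xi_i,\eta_j)=0$ killed by the nonsingularity (\ref{chg0}). Where you genuinely depart from the paper is the sign relation (\ref{xtgt}). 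The paper also starts from the strip-wise sign pattern $G\cdot(-1)^{i+j}\ge 0$, but then argues that the triple product $\overline x(t)G(t,\tau,\xi,\eta)\overline x^{(r)}(\tau)$ has constant sign \emph{jointly} in $(t,\tau)$, reduces to checking $(-1)^{r-k}\overline x(t)\overline x^{(r)}(\tau)\ge 0$ near zero, and settles that by an endpoint parity propagation: normalizing $\overline x^{(r)}\ge 0$ near $0$, passing through its $m$ sign changes to get its sign near $1$, transferring this to $\overline x^{(k)}$ near $1$ via the integral representation, using the $m$ zeros of $\overline x^{(k)}$ (Rolle, Proposition \ref{per_zn}) to return to a sign near $0$, and integrating up to $\overline x$. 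Your argument replaces all of this: since $G(t,\cdot)$ and $\overline x^{(r)}$ flip sign at the same points $\eta_l$ (the latter because $\overline x^{(r)}=(-1)^{r-k}g^{p'}\overline y_{(p')}$ with $g>0$ a.e.), the integrand $(-1)^{r-k}G(t,\tau)\overline x^{(r)}(\tau)$ has constant sign in $\tau$ for each fixed $t$, and the already-proved identity (\ref{ovr_x}) forces that sign to be $\sgn\overline x(t)$ (the case $\overline x(t)=0$ being trivial). This is valid and arguably cleaner: it avoids the auxiliary facts about the zeros and endpoint signs of $\overline x^{(k)}$ entirely, at the price of using the reproducing formula as the sign-fixing device. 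One small caution: your phrase ``crossing any $\eta_l$ reverses it'' must be read weakly — Lemma \ref{lem_kg0} only gives $G\cdot(-1)^{i+l}\ge 0$, and $G$ could vanish on a strip — but your argument only uses this weak inequality, so nothing breaks.
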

\begin{proof}
From Lemma \ref{det} and (\ref{kl_munu}), (\ref{krk}),
(\ref{gtt_xi}) it follows that
\begin{align}
\label{gttxi_eta} G(t, \, \tau, \, \xi, \, \eta) =
\frac{1}{C(k-1)!(r-k-1)!}
\begin{vmatrix} H(t, \, \tau) & H(\xi_1, \, \tau) & \dots & H(\xi_m, \,
\tau) \\ H(t, \, \eta_1) & H(\xi_1, \, \eta_1) & \dots & H(\xi_m,
\, \eta_1) \\ \dots & \dots & \dots & \dots \\ H(t, \, \eta_m) &
H(\xi_1, \, \eta_m) & \dots & H(\xi_m, \, \eta_m)
\end{vmatrix},
\end{align}
\begin{align}
\label{chg0} C=\begin{vmatrix}  H(\xi_1, \, \eta_1) & \dots &
H(\xi_m, \, \eta_1) \\ \dots & \dots & \dots \\  H(\xi_1, \,
\eta_m) & \dots & H(\xi_m, \, \eta_m)
\end{vmatrix} >0
\end{align}
by Proposition \ref{peremez1} and Lemma \ref{lem_kg0}. This
implies the first assertion of Lemma.

Let us prove the second assertion. Indeed, from (\ref{htt}),
(\ref{gttxi_eta}) and (\ref{chg0}) we get that
$$
(-1)^{r-k}\int \limits_0^1 G(t, \, \tau, \, \xi, \, \eta)
x^{(r)}(\tau)\, d\tau=\sum \limits _{j=1}^m \tilde c_j(x) H(t, \,
\eta_j)+
$$
$$
+\frac{(-1)^{r-k}}{(k-1)!(r-k-1)!} \int \limits _{[0, \, 1]}
\left(\int \limits _{[0, \, 1]} (t-s)_+^{k-1}(\tau-s)_+^{r-k-1}\,
ds\right) x^{(r)}(\tau)\, d\tau=
$$
$$
=\sum \limits _{j=1}^m \tilde c_j(x) H(t, \, \eta_j)+$$$$+
\frac{(-1)^{r-k}}{(k-1)!(r-k-1)!} \int \limits_0^t (t-s)^{k-1}
\int \limits _s^1 (\tau-s)^{r-k-1} x^{(r)}(\tau)\, d\tau\, ds
\stackrel{(\ref{w_rk_pg_def})}{=}
$$
$$
=\sum \limits _{j=1}^m \tilde c_j(x) H(t, \, \eta_j)+x(t),
$$
where $\tilde c_j(x)$ are linear functionals. We set
$c_j(x)=-\tilde c_j(x)$. The values $c_j(x)$ are some linear
combinations of the integrals
$$
\int \limits _0^1 H(\xi_i, \, \tau)x^{(r)}(\tau)\, d\tau=\int
\limits _0^1 H(\xi_i, \, \tau)g(\tau)\varphi(\tau)\, d\tau, \quad
\varphi =\frac{x^{(r)}}{g}.
$$
From (\ref{gv}), (\ref{h_est}) and (\ref{h_est1}) it follows that
the right part continuously depends on $\varphi\in L_p[0, \, 1]$.
This implies the continuity of the functionals $c_j$ on the space
${\cal W}^{r,k}_{p,g}[0, \, 1]$.

Let us prove the third assertion. By the second assertion, which
is already proved,
\begin{align}
\label{ovr_v_t} \overline{x}(t) - (-1)^{r-k}\int \limits_0^1 G(t,
\, \tau, \, \xi, \, \eta) \overline{x}^{(r)}(\tau)\, d\tau =\sum
\limits _{j=1}^m c_j(\overline{x}) H(t, \, \eta_j).
\end{align}
By the condition of assertion 3 in Lemma, $\overline{x}(\xi_i)=0$.
This together with (\ref{il01g}) implies that the left-hand side
of (\ref{ovr_v_t}) equals to zero in points $\xi_i$. Hence,
$$
\sum \limits _{j=1}^m c_j(\overline{x}) H(\xi_i, \, \eta_j)=0,
\quad 1\le i\le m.
$$
By (\ref{chg0}), we have $c_j(\overline{x})=0$.

The second equality (\ref{ovr_x}) can be proved similarly (here we
apply (\ref{il01g_eta})).

Let us prove (\ref{xtgt}). If $t<\xi_1<\dots<\xi_m$,
$\tau<\eta_1<\dots <\eta_m$, then $G(t, \, \tau, \, \xi, \,
\eta)\ge 0$ by Lemma \ref{lem_kg0} and (\ref{gtt_xi}). Let $t\in
(\xi_i, \, \xi_{i+1})$, $\tau \in (\eta_j, \, \eta_{j+1})$. Then
$$G(t, \, \tau, \, \xi, \, \eta)\cdot (-1)^{i+j} \ge 0.$$ Since
$\xi_i$ are points of sign change of the function $\overline{x}$,
and $\eta_j$ are the points of sign change of the function
$\overline{x}^{(r)}$ by the first equation (\ref{eq_main}), we get
that the sign of $\overline{x}(t)G(t, \, \tau, \, \xi, \, \eta)
\overline{x}^{(r)}(\tau)$ is constant. Therefore, it is sufficient
to prove the inequality $(-1)^{r-k}\overline{x}(t) \overline{x}
^{(r)}(\tau)\ge 0$ for $t$, $\tau$ from a small neighborhood of
zero. Without loss of generality we may assume that $\overline{x}
^{(r)}(\tau)\ge 0$ in a neighborhood of zero. Then $(-1)^m
\overline{x} ^{(r)}(\tau)\ge 0$ in a neighborhood of 1. Therefore,
$(-1)^{m+r-k} \overline{x} ^{(k)}(\tau)\ge 0$ in a neighborhood of
1, which implies that $(-1)^{r-k}\overline{x} ^{(k)}(\tau)\ge 0$
in a neighborhood of 0. Hence, $(-1)^{r-k}\overline{x} (\tau)\ge
0$ in a neighborhood of 0.
\end{proof}
The further arguments in obtaining the upper estimate are the same
as in \cite[\S 7]{busl_tikh}, \cite[p. 389--390]{vas_spn}.

\section{Some properties of the kernel $G(t, \, \tau, \, \xi, \, \eta)$.}
Let $0<\xi_1<\dots <\xi_n<1$, $0<\eta_1<\dots <\eta_n<1$, and let
the alternation condition
\begin{align}
\label{eta_jkr} \eta_{j+k-r} <\xi_j <\eta_{j+k}
\end{align}
hold. We set $\xi_0:=\eta_0:=0$, $\xi_{n+1}:=\eta_{n+1}:=1$,
$\xi=(\xi_i)_{i=1}^n$, $\eta=(\eta_i)_{i=1}^n$. Let
$\xi_{i-1}<t<\xi_i$, $\eta_{j-1}<\tau<\eta_j$. We set
\begin{align}
\label{alpha} \alpha _l=\xi_l, \quad 1\le l\le i-1, \quad \alpha
_i=t, \quad \alpha_l =\xi_{l-1}, \quad i+1\le l\le n+1;
\end{align}
\begin{align}
\label{beta} \beta_l=\eta_l, \quad 1\le l\le j-1, \quad
\beta_j=\tau, \quad \beta_l=\eta_{l-1}, \quad j+1\le l\le n+1.
\end{align}
Then $\alpha_1<\alpha_2<\dots<\alpha_{n+1}$,
$\beta_1<\beta_2<\dots <\beta_{n+1}$.

Applying Lemma \ref{lem_kg0} and (\ref{gtt_xi}), we obtain the
following assertion.
\begin{Sta}
\label{a_b_perem} If
\begin{align}
\label{ab_f_per} \beta_{l+k-r}<\alpha_l<\beta_{l+k}
\end{align}
for any $l$, then $G(t, \, \tau, \, \xi, \, \eta)\ne 0$.
\end{Sta}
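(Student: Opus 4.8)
The plan is to reduce the non-vanishing of $G$ to the strict positivity already established in Lemma~\ref{lem_kg0}, applied to the \emph{ordered} sequences $\alpha$ and $\beta$ rather than to the raw argument lists. First I would invoke the definition (\ref{gtt_xi}): the scalar prefactor is a nonzero constant, and the denominator $K_{r,k}(\xi_1,\dots,\xi_n,\eta_1,\dots,\eta_n)$ is strictly positive by Lemma~\ref{lem_kg0} together with the standing alternation condition (\ref{eta_jkr}). Hence $G(t,\tau,\xi,\eta)\ne 0$ is equivalent to the non-vanishing of the numerator $K_{r,k}(t,\xi_1,\dots,\xi_n,\tau,\eta_1,\dots,\eta_n)$, and it suffices to analyse this quantity.

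The key observation is that the first block of arguments $(t,\xi_1,\dots,\xi_n)$ is, up to reordering, exactly the strictly increasing sequence $(\alpha_1,\dots,\alpha_{n+1})$ defined in (\ref{alpha}): since $\xi_{i-1}<t<\xi_i$, inserting $t$ into its proper slot $i$ transforms $(t,\xi_1,\dots,\xi_n)$ into $(\alpha_1,\dots,\alpha_{n+1})$. Likewise $(\tau,\eta_1,\dots,\eta_n)$ is the reordering of $(\beta_1,\dots,\beta_{n+1})$ from (\ref{beta}). I would then use the antisymmetry of $K_{r,k}$ in each of its two argument blocks: by (\ref{krk}) and (\ref{kl_munu}), permuting the $\mu$-arguments permutes the columns of the determinant $K_k(\mu,\alpha)$, while permuting the $\nu$-arguments permutes the columns of $K_{r-k}(\nu,\alpha)$, so each permutation merely contributes an overall sign. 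Consequently $K_{r,k}(t,\xi_1,\dots,\xi_n,\tau,\eta_1,\dots,\eta_n)=(-1)^{(i-1)+(j-1)}K_{r,k}(\alpha,\beta)$, and for the purpose of non-vanishing only this $\pm1$ factor is relevant.

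Finally I would verify that hypothesis (\ref{ab_f_per}) is precisely the alternation condition (\ref{nu_jkr}) of Lemma~\ref{lem_kg0} for the increasing sequences $\alpha=(\alpha_l)_{l=1}^{n+1}$ and $\beta=(\beta_l)_{l=1}^{n+1}$, namely $\beta_{l+k-r}<\alpha_l<\beta_{l+k}$ for all admissible $l$. Since $\alpha_1<\dots<\alpha_{n+1}$ and $\beta_1<\dots<\beta_{n+1}$, Lemma~\ref{lem_kg0} then yields $K_{r,k}(\alpha,\beta)>0$, whence the numerator is nonzero and $G(t,\tau,\xi,\eta)\ne 0$.

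The step I expect to be the main obstacle is the index bookkeeping: one must confirm that (\ref{ab_f_per}) reproduces (\ref{nu_jkr}) for \emph{every} $l$, including the boundary indices where $l+k-r\le 0$ or $l+k\ge n+2$, using the conventions that out-of-range lower indices correspond to the left endpoint $0$ and out-of-range upper indices to the right endpoint $1$ (consistent with $\alpha_l,\beta_l\in(0,1)$ for $1\le l\le n+1$). Tracking the reordering signs is routine once one notes that only their product's modulus matters; the genuine content is this alignment of the alternation inequalities, after which the claim is immediate from Lemma~\ref{lem_kg0}.
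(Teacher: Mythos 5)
Your proposal is correct and is exactly the paper's (implicit, one-line) argument: the paper derives Proposition~\ref{a_b_perem} precisely by applying Lemma~\ref{lem_kg0} to the ordered sequences $\alpha$, $\beta$ obtained by inserting $t$ and $\tau$ into the $\xi$- and $\eta$-lists, together with the definition (\ref{gtt_xi}) and the block-antisymmetry of $K_{r,k}$, which contributes only the irrelevant sign $(-1)^{(i-1)+(j-1)}$. Your filling-in of the sign bookkeeping and of the positivity of the denominator via (\ref{eta_jkr}) is accurate, so nothing is missing.
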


For $1\le i\le n+1$, $1\le j\le n+1$ we set $\Delta_{i,j}=
(\xi_{i-1}, \, \xi_i) \cap (\eta _{j-1}, \, \eta_j)$.

\begin{Sta}
Let $\Delta_{i,j} \ne \varnothing$. Then
\begin{align}
\label{ijkr} i+k-r \le j\le i+k.
\end{align}
\end{Sta}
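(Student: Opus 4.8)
The plan is to translate the geometric hypothesis $\Delta_{i,j}\ne\varnothing$ into two numerical inequalities and then combine each with one half of the alternation condition (\ref{eta_jkr}). Since $\Delta_{i,j}=(\xi_{i-1},\,\xi_i)\cap(\eta_{j-1},\,\eta_j)$ is an intersection of two open intervals, its non-emptiness yields the pair of ``crossing'' inequalities
\[
\xi_{i-1}<\eta_j,\qquad \eta_{j-1}<\xi_i;
\]
indeed, any $t\in\Delta_{i,j}$ satisfies $\xi_{i-1}<t<\eta_j$ and $\eta_{j-1}<t<\xi_i$. These two inequalities will produce the lower and upper bounds in (\ref{ijkr}), respectively.

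For the upper bound $j\le i+k$ I would chain the second crossing inequality with the right half of (\ref{eta_jkr}) applied at index $i$: for $1\le i\le n$ we have $\xi_i<\eta_{i+k}$, so together with $\eta_{j-1}<\xi_i$ this gives $\eta_{j-1}<\eta_{i+k}$, and since the extended sequence $\{\eta_l\}$ (with $\eta_l=0$ for $l\le 0$ and $\eta_l=1$ for $l\ge n+1$) is non-decreasing, we conclude $j-1<i+k$, i.e. $j\le i+k$. Symmetrically, for the lower bound $i+k-r\le j$ I would combine the first crossing inequality with the left half of (\ref{eta_jkr}) applied at index $i-1$: for $2\le i\le n+1$ we have $\eta_{(i-1)+k-r}<\xi_{i-1}<\eta_j$, and monotonicity of $\{\eta_l\}$ gives $(i-1)+k-r<j$, that is $i+k-r\le j$. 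The one structural fact driving both steps is the contrapositive of monotonicity, $\eta_a>\eta_b\Rightarrow a>b$, which remains valid under the non-strict extension of $\{\eta_l\}$ to all integers.

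The only genuine care is at the two extreme values of $i$, where the alternation condition is unavailable for the relevant index. For the upper bound this is $i=n+1$, handled trivially by $j\le n+1<n+1+k=i+k$ (using $k\ge 1$); for the lower bound it is $i=1$, handled trivially by $i+k-r=1+k-r\le 0<j$ (using $k\le r-1$). I do not expect a real obstacle beyond this — the whole argument is index-chasing against the monotone sequence $\{\eta_l\}$ — so the main point to get right is the bookkeeping of the boundary conventions $\eta_0=0$, $\eta_{n+1}=1$, which is exactly what lets me avoid separate sub-cases for $i+k>n+1$ or $(i-1)+k-r<0$ other than the two trivial endpoints just noted.
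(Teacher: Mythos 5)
Your proof is correct, and it is a slightly cleaner route than the paper's. The paper proves this proposition by a case distinction on which left endpoint is interior to the other interval: either $\xi_{i-1}\le \eta_{j-1}<\xi_i$ or $\eta_{j-1}<\xi_{i-1}<\eta_j$; in the first case it chains against the $\eta$-sequence, in the second against the $\xi$-sequence (using the alternation in the form $\xi_{j-1-k}<\eta_{j-1}$ and $\eta_j<\xi_{j+r-k}$), each case producing a pair of bounds of which the union gives (\ref{ijkr}). You observe instead that non-emptiness of $\Delta_{i,j}$ yields \emph{both} crossing inequalities $\xi_{i-1}<\eta_j$ and $\eta_{j-1}<\xi_i$ simultaneously, so no case split is needed: each crossing inequality, chained with one half of the alternation condition (\ref{eta_jkr}) and the non-decreasing extension of $\{\eta_l\}$, delivers one of the two bounds, and only comparisons within the $\eta$-sequence are ever used. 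Your treatment of the extreme indices ($i=n+1$ via $k\ge 1$, and $i=1$ via $k\le r-1$) matches the trivial endpoint estimates the paper invokes in its two cases. What your organization buys is symmetry and the elimination of the case analysis; the underlying ingredients — alternation plus index monotonicity — are identical, so nothing is lost relative to the paper's argument.
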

\begin{proof}
We have $1\le i\le n+1$, $1\le j\le n+1$.

If $\Delta_{i,j} \ne \varnothing$, then either $\xi_{i-1}\le
\eta_{j-1}<\xi_i$ or $\eta_{j-1}<\xi_{i-1}<\eta_j$.

Consider the first case. If $i=1$, then $i+k-r<j$. If $i=n+1$,
then $j<i+k$. For $i\ge 2$ the inequality $\eta _{i-1+k-r}
\stackrel{(\ref{eta_jkr})}{<} \xi_{i-1}$ holds; hence, $\eta
_{i-1+k-r} < \eta _{j-1}$ and $j>i+k-r$. For $i\le n$ the
inequality $\xi_i\stackrel{(\ref{eta_jkr})}{<}\eta_{i+k}$ holds;
therefore, $\eta_{j-1}< \xi_i<\eta_{i+k}$ and $j<i+k+1$. Thus,
\begin{align}
\label{ineq1} i+k-r<j\le i+k.
\end{align}

Let us consider the second case. If $j=1$, then $j<i+k$. If
$j=n+1$, then $i-r+k<j$. For $j\ge 2$ we have
$\xi_{j-1-k}\stackrel{(\ref{eta_jkr})}{<}\eta_{j-1}$; therefore,
$\xi_{j-1-k}<\eta_{j-1}<\xi_{i-1}$ and $i>j-k$. For $j\le n$ the
inequality $\eta_j\stackrel{(\ref{eta_jkr})}{<}\xi _{j+r-k}$
holds; hence, $\xi_{i-1}<\eta_j <\xi_{j+r-k}$ and $j>i-1-r+k$.
Thus,
\begin{align}
\label{ineq2} i-r+k\le j<i+k.
\end{align}

The union of inequalities (\ref{ineq1}) and (\ref{ineq2}) gives
(\ref{ijkr}).
\end{proof}
\begin{Lem}
\label{posit1} Let
\begin{align}
\label{dijvrn} \Delta_{i,j}\ne \varnothing.
\end{align}
\begin{enumerate}
\item If $i-r+k+1\le j\le i+k-1$, then for any $\tau\in \Delta
_{i,j}$, $t\in \Delta _{i,j}$ we have $G(t, \, \tau, \, \xi, \,
\eta) \ne 0$.
\item If $j=i-r+k$, then for any $\tau \in \Delta_{i,j}$, $t\in
\Delta_{i,j}$ such that $t>\tau$ we have $G(t, \, \tau, \, \xi, \,
\eta) \ne 0$.
\item If $j=i+k$, then for any $\tau \in \Delta_{i,j}$, $t\in
\Delta_{i,j}$ such that $t<\tau$, we have $G(t, \, \tau, \, \xi,
\, \eta) \ne 0$.
\end{enumerate}
\end{Lem}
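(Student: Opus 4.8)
The plan is to deduce the three assertions from Proposition \ref{a_b_perem}. Fix $t, \tau \in \Delta_{i,j}$ and form the augmented sequences $\alpha = (\alpha_l)_{l=1}^{n+1}$ and $\beta = (\beta_l)_{l=1}^{n+1}$ by (\ref{alpha}) and (\ref{beta}); that is, $t$ is inserted into $\{\xi_l\}$ between $\xi_{i-1}$ and $\xi_i$, and $\tau$ into $\{\eta_m\}$ between $\eta_{j-1}$ and $\eta_j$. By Proposition \ref{a_b_perem} it then suffices to verify the alternation condition (\ref{ab_f_per}), i.e. $\beta_{l+k-r} < \alpha_l < \beta_{l+k}$ for every $l \in \{1, \dots, n+1\}$, under the natural conventions $\beta_m = 0$ for $m \le 0$ and $\beta_m = 1$ for $m \ge n+2$ (which make the boundary inequalities automatic, since each $\alpha_l \in (0, 1)$).

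The verification uses only four elementary inputs: the original alternation condition (\ref{eta_jkr}), equivalently $\eta_{p+k-r} < \xi_p < \eta_{p+k}$ for all $p$; the range restriction $i + k - r \le j \le i + k$ from (\ref{ijkr}); the two inequalities $\xi_{i-1} < \eta_j$ and $\eta_{j-1} < \xi_i$, which hold because $\Delta_{i,j} \ne \varnothing$; and the memberships $t, \tau \in (\xi_{i-1}, \xi_i) \cap (\eta_{j-1}, \eta_j)$. I would check the right-hand inequality $\alpha_l < \beta_{l+k}$ by splitting on whether $\alpha_l$ equals $\xi_l$ (when $l \le i-1$), $t$ (when $l = i$), or $\xi_{l-1}$ (when $l \ge i+1$), and on the position of $l+k$ relative to $j$; the left-hand inequality $\beta_{l+k-r} < \alpha_l$ is handled symmetrically. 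In the generic subcases each inequality is either a shifted instance of (\ref{eta_jkr}) or follows at once from $\xi_{i-1} < \eta_j$, $\eta_{j-1} < \xi_i$ and the location of $t$ or $\tau$. Several ``wrong-direction'' subcases are vacuous thanks to (\ref{ijkr}): for instance $l \ge i+1$ together with $l + k = j$ would force $j \ge i + k + 1$, contradicting $j \le i + k$.

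The crux is to isolate the only two subcases in which the inequality genuinely compares $t$ with $\tau$, both occurring at $l = i$: the upper inequality $t = \alpha_i < \beta_{i+k}$ reduces to $t < \tau$ precisely when $j = i+k$, and the lower inequality $\beta_{i+k-r} < \alpha_i = t$ reduces to $\tau < t$ precisely when $j = i + k - r$. Consequently, if $i - r + k + 1 \le j \le i + k - 1$ neither critical comparison arises and (\ref{ab_f_per}) holds for all $t, \tau \in \Delta_{i,j}$, giving assertion 1; if $j = i + k$ only the first appears and is met under the hypothesis $t < \tau$, giving assertion 3; and if $j = i - r + k$ only the second appears and is met under $t > \tau$, giving assertion 2. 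I expect the main obstacle to be the index bookkeeping — systematically running through the three index regimes for $\alpha$ and for $\beta$ and confirming that every non-critical subcase is either a direct instance of (\ref{eta_jkr}) or is excluded by (\ref{ijkr}).
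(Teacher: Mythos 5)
Your proposal is correct and follows essentially the same route as the paper's proof: insert $t$ and $\tau$ into the node sequences via (\ref{alpha}), (\ref{beta}), verify the alternation condition (\ref{ab_f_per}) case by case using only (\ref{eta_jkr}), (\ref{ijkr}) and the nonemptiness of $\Delta_{i,j}$, and then apply Proposition \ref{a_b_perem}. In particular, your isolation of the two critical subcases at $l=i$ --- where $\beta_{i+k}=\tau$ exactly when $j=i+k$ and $\beta_{i+k-r}=\tau$ exactly when $j=i-r+k$, forcing the hypotheses $t<\tau$ and $t>\tau$ of assertions 3 and 2 respectively --- is precisely the mechanism by which the paper's case analysis yields the three assertions.
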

\begin{proof}
In all cases
\begin{align}
\label{tdij} t\in \Delta _{i,j}, \quad \tau\in \Delta_{i,j};
\end{align}
therefore,
$$
\xi_0<\xi_1<\dots<\xi_{i-1}<t<\xi_i<\dots<\xi_n<\xi_{n+1},
$$
$$
\eta_0<\eta_1<\dots<\eta_{j-1}<\tau<\eta_j<\dots<\eta_n<\eta_{n+1}.
$$
We define the points $\alpha_l$ and $\beta_l$ by formulas
(\ref{alpha}), (\ref{beta}). Let us prove that the alternation
conditions $\beta_{l+k-r}<\alpha_l<\beta_{l+k}$ hold and apply
Proposition \ref{a_b_perem}.

First we prove that $\beta_{l+k-r}<\alpha_l$.
\begin{enumerate}
\item If $l\le i-1$, then $\alpha_l\stackrel{(\ref{alpha})}{=}\xi_l$.
Further, $l+k-r \le i-1+k-r \stackrel{(\ref{ijkr})}{\le} j-1$.
Hence, $\beta_{l+k-r} \stackrel{(\ref{beta})}{=} \eta_{l+k-r}$. It
remains to apply (\ref{eta_jkr}).
\item If $l=i$, then $\alpha_l \stackrel{(\ref{alpha})}{=} t$.
By (\ref{ijkr}), $j\ge i+k-r$.
\begin{itemize}
\item If $i+k-r\le j-1$, then $\beta_{i+k-r}\stackrel
{(\ref{beta})}{=} \eta_{i+k-r}$; hence, it is sufficient to check
the inequality $\eta_{i+k-r}<t$. It follows from relations
$t\stackrel{(\ref{tdij})}{>}\eta_{j-1}\ge \eta_{i+k-r}$.
\item If $j=i-r+k$, then $\beta_{i+k-r} \stackrel{(\ref{beta})}
{=}\tau$. By conditions of Lemma (see assertion 2), we have
$t>\tau$.
\end{itemize}
\item If $l\ge i+1$, then $\alpha_l \stackrel{(\ref{alpha})}{=}\xi_{l-1}$.
\begin{itemize}
\item Let $l+k-r\le j-1$. Then $\beta_{l+k-r} \stackrel{(\ref{beta})}{=}\eta_{l+k-r}$.
The desired inequality follows from relations $\beta_{l+k-r}
=\eta_{l+k-r} \le \eta_{j-1} \stackrel{(\ref{dijvrn})}{<} \xi_i
\le \xi_{l-1} =\alpha_l$.
\item Let $l+k-r=j$. Then $\beta_{l+k-r} \stackrel{(\ref{beta})}{=}\tau$. The desired
inequality follows from relations $\beta_{l+k-r} =\tau
\stackrel{(\ref{tdij})}{<} \xi_i\le \xi_{l-1}=\alpha_l$.
\item Let $l+k-r\ge j+1$. Then $\beta_{l+k-r}
\stackrel{(\ref{beta})}{=}\eta_{l+k-r-1}$. It remains to apply
(\ref{eta_jkr}).
\end{itemize}
\end{enumerate}
Now we prove that $\alpha_l<\beta_{l+k}$.
\begin{enumerate}
\item If $l\le i-1$, then $\alpha_l \stackrel{(\ref{alpha})}{=} \xi_l$.
\begin{itemize}
\item Let $l+k\le j-1$. Then $\beta_{l+k} \stackrel{(\ref{beta})}{=} \eta_{l+k}$.
The desired inequality follows from (\ref{eta_jkr}).
\item Let $l+k=j$. Then $\beta_{l+k} \stackrel{(\ref{beta})}{=} \tau$. The desired inequality
follows from relations $\alpha_l=\xi_l\le \xi_{i-1}
\stackrel{(\ref{tdij})}{<} \tau=\beta_{l+k}$.
\item Let $l+k\ge j+1$. Then $\beta_{l+k} \stackrel{(\ref{beta})}{=} \eta_{l+k-1}$.
The desired inequality follows from relations $\alpha_l=\xi_l\le
\xi_{i-1} \stackrel{(\ref{dijvrn})}{<} \eta_j\le
\eta_{l+k-1}=\beta_{l+k}$.
\end{itemize}
\item If $l=i$, then $\alpha_l \stackrel{(\ref{alpha})}{=} t$. By (\ref{ijkr}), $i+k\ge j$.
\begin{itemize}
\item Let $i+k>j$. Then $\beta_{i+k}\stackrel{(\ref{beta})}{=}\eta_{i+k-1}\ge \eta_j
\stackrel{(\ref{tdij})}{>} t=\alpha_i$.
\item Let $i+k=j$. Then $\beta_{i+k} \stackrel{(\ref{beta})}{=} \tau$,
and the inequality $t<\tau$ follows from the condition of Lemma
(see assertion 3).
\end{itemize}
\item Let $l\ge i+1$. Then $\alpha_l \stackrel{(\ref{alpha})}{=} \xi_{l-1}$. Since $l+k>i+k
\stackrel{(\ref{ijkr})}{\ge} j$, we have
$\beta_{l+k}\stackrel{(\ref{beta})}{=} \eta_{l-k-1}$. Hence, the
desired inequality follows from (\ref{eta_jkr}).
\end{enumerate}
This completes the proof.
\end{proof}
\begin{Lem}
\label{posit2} Let $\Delta_{i,j} \ne \varnothing$, $\xi_{i-1}\ne
\eta_{j-1}$, $\tau _*= \max \{\xi_{i-1}, \, \eta_{j-1}\}$. Then
there exist $t_*\in (0, \, 1)$ and $\delta>0$ such that $G(t, \,
\tau, \, \xi, \, \eta) \ne 0$ for a.e. $(t, \, \tau) \in
(t_*-\delta, \, t_*+\delta) \times (\tau_* -\delta, \,
\tau_*+\delta)$.
\end{Lem}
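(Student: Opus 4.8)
The plan is to read off $G(t,\tau,\xi,\eta)\ne0$ from the alternation criterion of Proposition~\ref{a_b_perem}, exactly as in Lemma~\ref{posit1}. Recall that for $t$ in the $\xi$-gap $(\xi_{i'-1},\xi_{i'})$ and $\tau$ in the $\eta$-gap $(\eta_{j'-1},\eta_{j'})$ the points $\alpha,\beta$ of (\ref{alpha}), (\ref{beta}) satisfy, by Proposition~\ref{a_b_perem}, $G\ne0$ as soon as $\beta_{l+k-r}<\alpha_l<\beta_{l+k}$ for all $l$; only this sufficient direction is needed. Since $G$ vanishes on the grid lines $t=\xi_l$ and $\tau=\eta_l$, which form a null set, it is enough to produce $t_*$ and $\delta$ so that the alternation holds at every $(t,\tau)$ in the neighbourhood with $t\notin\{\xi_l\}_l$, $\tau\notin\{\eta_l\}_l$. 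Because $\tau_*$ is the left end of $\Delta_{i,j}$, I treat separately the cases $\tau_*=\eta_{j-1}$ (case A) and $\tau_*=\xi_{i-1}$ (case B).

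First I would pin down the range of $j$. In case A non-emptiness of $\Delta_{i,j}$ gives $\xi_{i-1}<\eta_{j-1}<\xi_i$; combining this with (\ref{eta_jkr}) for the indices $i-1$ and $i$ yields $i+k-r+1\le j\le i+k$, so that the lower boundary value $j=i+k-r$ cannot occur. Symmetrically, in case B one has $\eta_{j-1}<\xi_{i-1}<\eta_j$ and $i+k-r\le j\le i+k-1$, so the upper boundary value $j=i+k$ is excluded. Thus on each side exactly one of the two boundary configurations of Lemma~\ref{posit1} is ruled out.

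Next comes the choice of $t_*$, which is the heart of the matter. In case A the gap of $\tau$ jumps from $j-1$ to $j$ as $\tau$ passes $\tau_*=\eta_{j-1}$, while $t$ stays in gap $i$; I take $t_*\in\bigl(\max\{\xi_{i-1},\eta_{i+k-r}\},\ \min\{\xi_i,\eta_{i+k-1}\}\bigr)$, a non-empty interval by (\ref{eta_jkr}). For the two boundary values of $j$ the knot $\tau_*=\eta_{j-1}$ is an endpoint of this interval ($j=i+k$ forces $t_*<\tau_*$ and $j=i+k-r+1$ forces $t_*>\tau_*$), so the sign of $t-\tau$ is automatically the one the boundary cell demands, while for interior $j$ that sign is immaterial. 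In case B the gap of $\tau$ stays equal to $j$, but the value $\tau_*=\xi_{i-1}$ enters the alternation only through the two comparisons $\alpha_{j-k}<\tau$ and $\tau<\alpha_{j-k+r}$ (the conditions at $l=j-k$ and $l=j-k+r$). I therefore choose the gap of $t$ so that $\alpha_{j-k}<\xi_{i-1}<\alpha_{j-k+r}$ strictly: for $i+k-r\le j\le i+k-2$ this already holds with $t$ in gap $i$ and $t_*>\xi_{i-1}$, since then $\alpha_{j-k}=\xi_{j-k}<\xi_{i-1}$ and $\alpha_{j-k+r}>\xi_{i-1}$; for the extreme $j=i+k-1$ one has $\alpha_{j-k}=\xi_{i-1}$ when $t$ lies in gap $i$, so I instead place $t$ in gap $i-1$ with $t_*<\xi_{i-1}$, which turns $\alpha_{j-k}=\alpha_{i-1}$ into $t<\xi_{i-1}$ and keeps $\alpha_{j-k+r}=\xi_{i+r-2}>\xi_{i-1}$.

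Finally, with $\delta$ so small that the neighbourhood meets only the gaps just named and that all the chosen strict inequalities survive, I would verify $\beta_{l+k-r}<\alpha_l<\beta_{l+k}$ for every admissible $(t,\tau)$ on both sides of $\tau_*$, following the computation of Lemma~\ref{posit1} except at the two indices involving $\tau_*$. The main obstacle—and the whole reason the lemma is not a triviality—is that a single $t_*$ must work for $\tau$ on both sides of $\tau_*$, whereas the bounds used in Lemma~\ref{posit1} tacitly assume $t$ and $\tau$ in one common cell $\Delta_{i,j}$ and are too crude here. The way around it is to use the sharp form of the alternation inequalities and to have frozen, by the placement of $t_*$, the sign of the only two $\tau_*$-sensitive comparisons (the relation $t\lessgtr\tau$ and the relation between $\xi_{i-1}$ and $\tau$); every remaining comparison is between genuine knots $\xi_\bullet$ and $\eta_\bullet$ and holds by (\ref{eta_jkr}) irrespective of the side of $\tau_*$. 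Proposition~\ref{a_b_perem} then gives $G(t,\tau,\xi,\eta)\ne0$ for a.e. $(t,\tau)$ in $(t_*-\delta,t_*+\delta)\times(\tau_*-\delta,\tau_*+\delta)$.
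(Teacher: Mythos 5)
Your overall strategy is the same as the paper's: split according to whether $\tau_*=\eta_{j-1}$ or $\tau_*=\xi_{i-1}$, fix one $t_*$, and check the alternation (\ref{ab_f_per}) on both sides of $\tau_*$ so that Proposition \ref{a_b_perem} applies. Your Case A is correct: the window $t_*\in\bigl(\max\{\xi_{i-1},\eta_{i+k-r}\},\min\{\xi_i,\eta_{i+k-1}\}\bigr)$ is a legitimate uniform substitute for the paper's choices $t_*\in\Delta_{i,j}$ or $\Delta_{i,j-1}$, precisely because it also encodes the two alternation conditions that constrain $t$ itself, namely $\beta_{i+k-r}<t<\beta_{i+k}$. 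In Case B, however, you drop exactly those conditions. There the alternation still requires $t<\beta_{i+k}=\eta_{i+k-1}$ (recall $j\le i+k-1$ in this case), and in the proof of Lemma \ref{posit1} this inequality is obtained from $t<\eta_j\le\eta_{i+k-1}$, i.e.\ from $t$ lying in the common cell $\Delta_{i,j}$ --- not merely in the $\xi$-gap $(\xi_{i-1},\xi_i)$. Your prescription ``$t$ in gap $i$, $t_*>\xi_{i-1}$'' allows $t_*>\eta_{i+k-1}$ whenever $\eta_{i+k-1}<\xi_i$, and then the verification cannot go through; the analogous omission occurs in your extreme sub-case $j=i+k-1$, where the required lower bound $\eta_{j-r}<t$ is not implied by $t\in(\xi_{i-2},\xi_{i-1})$, $t_*<\xi_{i-1}$.

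This is a genuine gap, not a presentational one: for some choices allowed by your rule the conclusion itself is false. Take $r=2$, $k=1$, $n=2$, $\xi_1=0.2$, $\xi_2=0.6$, $\eta_1=0.4$, $\eta_2=0.55$ (so (\ref{eta_jkr}) holds), and $i=2$, $j=1$; then $\Delta_{2,1}=(0.2,0.4)\ne\varnothing$, $\tau_*=\xi_1=0.2$, and $j$ lies in your ``interior'' range $i+k-r\le j\le i+k-2$, so your rule permits $t_*=0.58$. By (\ref{httt}), $H(t,\tau)=\min\{t,\tau\}$, hence for every $t\in(\eta_2,\xi_2)=(0.55,0.6)$ and every $\tau<\eta_2$ one has $H(t,s)=s=H(\xi_2,s)$ for each $s\in\{\tau,\eta_1,\eta_2\}$: the columns of the determinant (\ref{gttxi_eta}) corresponding to $t$ and to $\xi_2$ coincide, so $G(t,\tau,\xi,\eta)\equiv 0$ on the whole rectangle $(t_*-\delta,t_*+\delta)\times(\tau_*-\delta,\tau_*+\delta)$ for every small $\delta$. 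Thus with your $t_*$ the lemma's conclusion fails, and no check of (\ref{ab_f_per}) can succeed. The repair is exactly what the paper does (and what you did in Case A): in Case B place $t_*$ in the non-empty cell $\Delta_{i,j}$ for $j\le i+k-2$, and in $\Delta_{i-1,j}$ when $j=i+k-1$, i.e.\ intersect your $\xi$-gap with the appropriate $\eta$-gap as well.
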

\begin{proof}
{\it Case $\xi_{i-1}>\eta_{j-1}$}. Then $\tau_*=\xi_{i-1}$ and
$\xi_{i-1}<\eta_j$. We show that $i\ge j-k+1$. Indeed, otherwise
$i\stackrel{(\ref{ijkr})}{=}j-k$,
$\eta_{j-1}<\xi_{i-1}=\xi_{j-k-1}$; this contradicts with
(\ref{eta_jkr}).
\begin{enumerate}
\item Let $i\ge j-k+2$. We take $t_*\in \Delta_{i,j}$ and
sufficiently small $\delta >0$. Then $\tau_*<t_*$. If $\tau \in
(\tau_*, \, \tau_*+\delta)$, $|t-t_*|<\delta$, then $\tau<t$, $t$,
$\tau\in \Delta_{i,j}$, and by Lemma \ref{posit1} we get $G(t, \,
\tau, \, \xi, \, \eta) \ne 0$. Let $\tau\in (\tau_*-\delta, \,
\tau_*)$. Since $\xi_{i-1}>\eta_{j-1}$, the points $\alpha_l$ and
$\beta_l$ are defined by formulas (\ref{alpha}), (\ref{beta}), as
in the case $\tau \in (\tau_*, \, \tau_*+\delta)$. Hence, it
suffices to check that $\tau> \alpha _{j-k}$ and to apply
Proposition \ref{a_b_perem}. Indeed, since $j-k\le i-2$, we have
$\alpha_{j-k} \stackrel{(\ref{alpha})}{=} \xi_{j-k}\le
\xi_{i-2}<\tau$.
\item Let $i=j-k+1$. We take sufficiently small $\delta>0$ and $t_*\in \Delta
_{i-1,j}$. If $\tau \in (\tau_*-\delta, \, \tau_*)$, then for
small $\delta>0$ we have $\tau >t$. Since $i=j-k+1$, the case
$j=i-1-r+k$ is impossible. Hence, $G(t, \, \tau, \, \xi, \, \eta)
\ne 0$ by Lemma \ref{posit1} with $i:=i-1$, $j:=j$. Let $\tau \in
(\tau_*, \, \tau_*+\delta)$. Then points $\alpha_l$, $\beta_l$ are
defined as in case $\tau \in (\tau_*-\delta, \, \tau_*)$:
$\alpha_l=\xi_l$ for $l\le i-2$, $\alpha_{i-1}=t$,
$\alpha_l=\xi_{l-1}$ for $l\ge i$, $\beta_l$ is defined by
(\ref{beta}). Thus, in order to check the conditions
(\ref{ab_f_per}) it is sufficient to prove that $\tau<\alpha
_{j+r-k}$. Indeed, $j+r-k= i+r-1\ge i+1$; therefore,
$\alpha_{j+r-k} =\xi_{j+r-k-1}\ge \xi_i>\tau$.
\end{enumerate}

{\it Case $\xi_{i-1}<\eta_{j-1}$.} Then $i\le j+r-k-1$ (otherwise,
$i \stackrel{(\ref{ijkr})}{=} j+r-k$, $\xi_{i-1}<\eta_{j-1}=
\eta_{i-r+k-1}$, which contradicts with (\ref{eta_jkr})).
Moreover,
\begin{align}
\label{xi_i_eta_j} \xi_i>\eta_{j-1}, \quad \tau_*= \eta_{j-1}.
\end{align}
\begin{enumerate}
\item Let $i\le j+r-k-2$. We take $t_*\in \Delta_{i,j-1}$.
Then $\tau_*=\eta_{j-1}>t_*$. If $|t-t_*|<\delta$, $\tau\in
(\tau_*-\delta, \, \tau_*)$, then for small $\delta>0$ the
inequality $t<\tau$ holds and $\tau \in \Delta_{i,j-1}$. Here
$\alpha_l$ are defined by (\ref{alpha}), $\beta_l=\eta_l$ for
$l\le j-2$, $\beta_{j-1}=\tau$, $\beta_l=\eta_{l-1}$ for $l\ge j$.
By Lemma \ref{posit1} with $i:=i$, $j:=j-1$, $G(t, \, \tau, \,
\xi, \, \eta)\ne 0$. Let $\tau \in (\tau_*, \, \tau_*+\delta)$.
Then the numbers $\alpha_l$ are defined by (\ref{alpha}) as in the
case $\tau\in (\tau_*-\delta, \, \tau_*)$. The numbers $\beta_l$
are defined as follows: $\beta_l=\eta_l$ for $l\le j-1$,
$\beta_j=\tau$, $\beta_l=\eta_{l-1}$ for $l\ge i+1$. Thus, in
definition of $\beta_l$ we change $\beta_{j-1}$ and $\beta_j$:
$\beta_{j-1}=\eta_{j-1}$, $\beta_j=\tau$. Hence, in order to prove
(\ref{ab_f_per}) it is sufficient to show that
\begin{align}
\label{a1} \alpha_{j-1-k} <\eta_{j-1}< \alpha_{j-1+r-k},
\end{align}
\begin{align}
\label{a2} \alpha_{j-k}<\tau< \alpha_{j+r-k}.
\end{align}
Since $t_*\in \Delta_{i,j-1}$, we have $\xi_{i-1}<t<\xi_i$. By
conditions of Lemma, $\Delta_{i,j}\ne \varnothing$. Therefore, by
(\ref{ijkr}), $j-1-k\le i-1$. Hence,
$\alpha_{j-1-k}\stackrel{(\ref{alpha})}{=} \xi_{j-1-k}
<\eta_{j-1}$ by (\ref{eta_jkr}). Further, since $i\le j+r-k-2$, we
have $j-1+r-k\ge i+1$ and $\alpha_{j-1+r-k}
\stackrel{(\ref{alpha})}{=} \xi_{j-2+r-k}\ge \xi_i\stackrel
{(\ref{xi_i_eta_j})}{>}\eta_{j-1}$. This completes the proof of
(\ref{a1}). Let us check (\ref{a2}). We have $\alpha_{j+r-k}=\xi
_{j+r-k-1}\ge \xi_{i+1}>\tau$. In order to prove that $\tau>
\alpha_{j-k}$ it is sufficient to show that $\eta_{j-1}
>\alpha_{j-k}$ (see the second inequality of (\ref{xi_i_eta_j})).
If $j-k\le i-1$, then $\alpha_{j-k} \stackrel{(\ref{alpha})}{=}
\xi_{j-k} \le \xi_{i-1} <\eta_{j-1}$. If $j-k=i$, then
$\alpha_{j-k}=\alpha_i \stackrel{(\ref{alpha})}{=} t<\eta_{j-1}$
(since $t_*\in \Delta_{i,j-1}$ and $\delta$ is sufficiently
small).

\item Let $i=j+r-k-1$. We take $t_* \in \Delta_{i,j}$.
Then $t_*>\tau_*$. If $|t -t_*|< \delta$, $\tau \in (\tau_*, \,
\tau_*+\delta)$, then $\tau<t$ for small $\delta>0$. The case
$i+k=j$ is impossible, since $r\ge 2$. Therefore, $G(t, \, \tau,
\, \xi, \, \eta)\ne 0$ by Lemma \ref{posit1}. Let $\tau \in
(\tau_*-\delta, \, \tau_*)$. Then $\beta_{j-1}=\tau$,
$\beta_j=\eta_{j-1}$, other numbers $\alpha_l$ and $\beta_l$ are
the same as for $\tau \in (\tau_*, \, \tau_*+\delta)$; i.e., they
are defined by (\ref{alpha}) and (\ref{beta}). Hence, in order to
check the condition (\ref{ab_f_per}), it is sufficient to prove
that
\begin{align}
\label{a3} \alpha _{j-k-1}<\tau<\alpha_{j-1+r-k},
\end{align}
\begin{align}
\label{a4} \alpha _{j-k}<\eta_{j-1}<\alpha_{j+r-k}.
\end{align}
We have $j-k-1=i-r\le i-1$; therefore, $\alpha _{j-k-1}
\stackrel{(\ref{alpha})}{=} \xi_{j-k-1}\le \xi_{i-1}<\tau$ (the
last inequality holds since $\xi_{i-1}<\eta_{j-1}$, and $\tau$ is
close to $\tau_*\stackrel{(\ref{xi_i_eta_j})}{=} \eta_{j-1}$).
Further, $j-1+r-k=i$; hence, $\alpha_{j-1+r-k}
=\alpha_i\stackrel{(\ref{alpha})}{=} t>\tau$. This completes the
proof of (\ref{a3}). Let us check (\ref{a4}). We have
$\alpha_{j+r-k}=\alpha_{i+1} \stackrel{(\ref{alpha})}{=} \xi_i
\stackrel{(\ref{xi_i_eta_j})} {>} \eta_{j-1}$. Since $r\ge 2$, we
get $j-k=i-r+1\le i-1$, and
$\alpha_{j-k}\stackrel{(\ref{alpha})}{=}\xi_{j-k}\le
\xi_{i-1}<\eta_{j-1}$.
\end{enumerate}
This completes the proof.
\end{proof}
\begin{Lem}
\label{posit3} Let $r\ge 3$, $\Delta_{i,j}\ne \varnothing$,
$\tau_*=\xi_{i-1}=\eta_{j-1}$. Then there exist $\delta>0$ and
$t_*\in (0, \, 1)$ such that $G(t, \, \tau, \, \xi, \, \eta) \ne
0$ for a.e. $(t, \, \tau) \in (t_*-\delta, \, t_*+\delta) \times
(\tau_*-\delta, \, \tau_*+\delta)$.
\end{Lem}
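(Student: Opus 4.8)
The plan is to reduce this degenerate case $\xi_{i-1}=\eta_{j-1}=\tau_*$ to the already-proven Lemmas~\ref{posit1} and~\ref{posit2} by perturbing the coincident knots slightly apart. The difficulty is that when $\xi_{i-1}$ and $\eta_{j-1}$ coincide, the point $\tau_*$ is simultaneously a boundary of an $x$-interval and a $y$-interval, so near $(\tau_*,\tau_*)$ the index assignment from~(\ref{alpha}) and~(\ref{beta}) is ambiguous and the alternation conditions~(\ref{ab_f_per}) must be verified in each of the four quadrants determined by the signs of $t-\tau_*$ and $\tau-\tau_*$. Since $r\ge 3$ there is enough slack in the inequalities $j+k-r\le \ell\le j+k$ to allow this.

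First I would fix, by~(\ref{ijkr}) applied to $\Delta_{i,j}\ne\varnothing$ with $\xi_{i-1}=\eta_{j-1}$, the admissible range of the index shift $j-i$; the equality of knots forces a specific relation, and I would determine from~(\ref{eta_jkr}) exactly which value $j-i+k$ takes (for a genuine double knot the generic situation is $j=i+k-r+1$ or a neighbouring value, where the constraint $r\ge 3$ becomes essential to avoid the excluded endpoints $j=i+k-r$ and $j=i+k$ handled only conditionally in Lemma~\ref{posit1}). Then I would choose $t_*$ to lie in the interior of whichever cell $\Delta_{i,j}$ or $\Delta_{i-1,j}$ or $\Delta_{i,j-1}$ keeps $t_*$ strictly separated from $\tau_*$, and pick $\delta>0$ small enough that the square $(t_*-\delta,t_*+\delta)\times(\tau_*-\delta,\tau_*+\delta)$ meets only the intended cells.

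Next, for a point $(t,\tau)$ in the square I would define $\alpha_\ell$ by~(\ref{alpha}) and $\beta_\ell$ by~(\ref{beta}) according to the cell containing $(t,\tau)$, and then verify the alternation inequalities $\beta_{\ell+k-r}<\alpha_\ell<\beta_{\ell+k}$ for every $\ell$, exactly as in the proofs of Lemmas~\ref{posit1} and~\ref{posit2}. The verification splits into the ranges $\ell\le i-1$, $\ell=i$, $\ell\ge i+1$ (and the analogous split for $\beta$), and in each range one substitutes the explicit values of $\alpha_\ell,\beta_{\ell\pm\cdot}$ and invokes the global alternation~(\ref{eta_jkr}) together with the strict monotonicity of $\{\xi_i\}$ and $\{\eta_j\}$. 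The only inequalities that are not immediate are those straddling the coincident knot, i.e.\ the ones involving the index $\ell$ for which $\beta_{\ell+k-r}$ or $\beta_{\ell+k}$ lands on $\tau_*=\xi_{i-1}=\eta_{j-1}$; here the hypothesis $r\ge 3$ supplies the extra gap $\eta_{j-2}<\eta_{j-1}<\eta_j$ and $\xi_{i-2}<\xi_{i-1}<\xi_i$ needed to keep the strict inequalities even as $(t,\tau)$ crosses the diagonal. Once~(\ref{ab_f_per}) holds, Proposition~\ref{a_b_perem} yields $G(t,\tau,\xi,\eta)\ne 0$.

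The main obstacle I expect is the bookkeeping across the four quadrants around the double knot: on each side of $\tau_*$ the definitions~(\ref{alpha}),~(\ref{beta}) assign indices differently, so I would organize the argument by first treating the two quadrants with $t,\tau$ on the same side of $\tau_*$ (where $(t,\tau)$ stays in a single cell and Lemma~\ref{posit1} applies almost verbatim) and then the two ``mixed'' quadrants (where the analysis parallels the two cases of Lemma~\ref{posit2}, with $r\ge 3$ ensuring no forbidden boundary index $j=i+k-r$ or $j=i+k$ is forced). Careful choice of $t_*$ in a cell strictly on one side of $\tau_*$ reduces the number of quadrants one must examine, and the remaining inequalities follow mechanically from~(\ref{eta_jkr}) and monotonicity.
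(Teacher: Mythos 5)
Your overall skeleton --- place $t_*$ in a cell adjacent to $\tau_*$, treat $\tau$ on the two sides of $\tau_*$ separately, invoke Lemma \ref{posit1} on the side where $(t,\tau)$ stays in a single cell, and verify (\ref{ab_f_per}) by hand plus Proposition \ref{a_b_perem} on the other side --- is exactly the paper's strategy. But two of your concrete choices are wrong, and together they suppress the one idea that makes the proof work. First, the cells $\Delta_{i-1,j}$ and $\Delta_{i,j-1}$ you offer as placements for $t_*$ are empty here: since $\xi_{i-1}=\eta_{j-1}=\tau_*$, we have $\Delta_{i-1,j}=(\xi_{i-2},\,\tau_*)\cap(\tau_*,\,\eta_j)=\varnothing$ and likewise $\Delta_{i,j-1}=\varnothing$. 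The only admissible placements are $\Delta_{i,j}$ (right of $\tau_*$) and $\Delta_{i-1,j-1}$ (left of $\tau_*$), and the choice between them is not free. Second, the coincidence of knots does not ``force a specific relation'' on $j-i$: together with (\ref{eta_jkr}) it only \emph{excludes} the two values $j=i+k$ and $j=i+k-r$ (this is (\ref{iknj}) in the paper), and that exclusion needs no hypothesis on $r$ at all --- so your repeated claim that $r\ge 3$ is what rules out these endpoints mislocates where the hypothesis enters.

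What can genuinely fail is the hand verification of (\ref{ab_f_per}) in the mixed quadrant, and it fails for a different pair of index relations. If $i=j+1-k$ and you take $t_*\in\Delta_{i,j}$, then for $\tau\in(\tau_*-\delta,\,\tau_*)$ one index gives $\alpha_{i-1}=\xi_{i-1}=\eta_{j-1}=\beta_{i-1+k}$, so the strict inequality $\alpha_l<\beta_{l+k}$ is violated and Proposition \ref{a_b_perem} cannot be applied; symmetrically, if $i=j-1+r-k$ and you take $t_*\in\Delta_{i-1,j-1}$, then for $\tau\in(\tau_*,\,\tau_*+\delta)$ one gets $\beta_{i+k-r}=\eta_{j-1}=\xi_{i-1}=\alpha_i$, violating $\beta_{l+k-r}<\alpha_l$. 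The true content of $r\ge 3$ is that these two bad relations cannot hold simultaneously (otherwise $i=j-k+1$ and $i=j-k-1+r$ give $r=2$), so at least one of the two placements of $t_*$ survives; the proof must therefore open with this dichotomy and select the cell according to which relation fails. Your ``mechanical'' verification, run without this case split and without the criterion for switching cells, breaks down in exactly one quadrant whenever one of the bad relations holds, so this is a genuine gap rather than omitted bookkeeping.
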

\begin{proof}
First we prove that $i-1+k\ne j$ or $i\ne j-1+r-k$. Indeed,
otherwise $i=j-k+1$, $i=j-k-1+r$; i.e., $1=r-1$. It contradicts
with condition $r\ge 3$.

We show that
\begin{align}
\label{iknj} i+k\ne j, \quad i-r+k\ne j.
\end{align}
Indeed, if $i+k=j$ or $i=j+r-k$, then $\xi_{i-1}=\eta_{i+k-1}$ or
$\xi_{i-1}=\eta_{i-r+k-1}$. This contradicts with the alternation
condition (\ref{eta_jkr}).

{\it Case $i\ne j+1-k$}. We take $t_*\in \Delta _{i,j}$.

Let $\tau\in (\tau_*,\, \tau_*+\delta)$, $|t-t_*|<\delta$. Then
$t\in \Delta_{i,j}$, $\tau \in \Delta_{i,j}$ for small $\delta>0$.
Since $i+k\ne j$, $i-r+k\ne j$, we get $G(t, \, \tau, \, \xi, \,
\eta)\ne 0$ by Lemma \ref{posit1}.

Let $\tau \in (\tau_*-\delta, \, \tau_*)$. Then
\begin{align}
\label{alpha1} \alpha_l=\xi_l, \quad l\le i-1, \quad \alpha_i=t,
\quad \alpha_l=\xi_{l-1}, \quad l\ge i+1,
\end{align}
\begin{align}
\label{beta1} \beta_l=\eta_l, \quad l\le j-2, \quad
\beta_{j-1}=\tau, \quad \beta_l=\eta_{l-1}, \quad l\ge j.
\end{align}

Let us prove that
\begin{align}
\label{bet_lkr} \beta_{l+k-r}<\alpha_l.
\end{align}
\begin{enumerate}
\item If $l\le i-1$, then $\alpha_l\stackrel{(\ref{alpha1})}{=}\xi_l$, 
$l+k-r\le i-1+k-r \stackrel{(\ref{ijkr})}{\le} j-1$, and
$\beta_{l+k-r}\stackrel{(\ref{beta1})}{\le} \tau
<\xi_{i-1}=\xi_l$.
\item If $l=i$, then $\alpha_l\stackrel{(\ref{alpha1})}{=}t$. Further, $l+k-r=i+k-r
\stackrel{(\ref{ijkr})}{\le} j$; hence, $\beta_{l+k-r}\le \beta _j
\stackrel{(\ref{beta1})}{=}\eta_{j-1}$. It remains to apply the
inequality $t>\eta_{j-1}$ (it holds since $t_*\in \Delta_{i,j}$
and $\delta$ is sufficiently small).
\item If $l\ge i+1$, then $\alpha_l\stackrel{(\ref{alpha1})}{=}\xi_{l-1}$. Let $\beta_{l+k-r}\le
\tau$. Then (\ref{bet_lkr}) follows from inequalities
$\tau<\xi_i\le \xi_{l-1}$. If $\beta_{l+k-r}>\tau$, then
$\beta_{l+k-r} \stackrel{(\ref{beta1})}{=}\eta_{l+k-r-1}$, and
(\ref{bet_lkr}) follows from (\ref{eta_jkr}).
\end{enumerate}

Now we prove that
\begin{align}
\label{alp_l} \alpha_l<\beta _{l+k}.
\end{align}
\begin{enumerate}
\item If $l\le i-1$, then $\alpha_l\stackrel{(\ref{alpha1})}{=} \xi_l$.
\begin{itemize}
\item If $l+k\le j-2$, then
$\beta_{l+k}\stackrel{(\ref{beta1})}{=}\eta_{l+k}$, and
(\ref{alp_l}) follows from (\ref{eta_jkr}).
\item If $l+k=j-1$, then
$\beta_{l+k}\stackrel{(\ref{beta1})}{=} \tau>\xi_{i-2}$ (the last
inequality holds for small $\delta>0$, since $\tau_*=\xi_{i-1}$
and $\tau>\tau_*-\delta$). Hence, if $l+k=j-1$ and $l\le i-2$,
then $\beta_{l+k}>\xi_{i-2}\ge \xi_l=\alpha_l$. Let $l+k=j-1$,
$l=i-1$. Then $\xi_l=\xi_{i-1}=\eta_{j-1}=\eta_{l+k}$; this
contradicts with (\ref{eta_jkr}).
\item If $l+k\ge j$, then
$\beta_{l+k} =\eta_{l+k-1}$. Hence, (\ref{alp_l}) is equivalent to
$\xi_l<\eta_{l+k-1}$. We have $\xi_l\le \xi_{i-1}=\eta_{j-1}\le
\eta_{l+k-1}$; moreover, $\xi_l=\eta_{l+k-1}$ holds only for
$l=i-1$, $l+k=j$. Then $i=j+1-k$, which contradicts with our
assumption.
\end{itemize}
\item If $l=i$, then $\alpha_l\stackrel{(\ref{alpha1})}{=}t \in (\eta_{j-1}, \, \eta_j)$.
Further, $l+k=i+k \stackrel{(\ref{ijkr})}{\ge} j$; thus,
$\beta_{l+k}\stackrel{(\ref{beta1})}{=} \eta_{i+k-1}$. By
(\ref{ijkr}) and (\ref{iknj}), we have $i+k-1\ge j$; therefore,
$\beta_{l+k}\ge \eta_j>t=\alpha_l$.
\item If $l\ge i+1$, then $\alpha_l\stackrel{(\ref{alpha1})}{=}\xi_{l-1}$, $l+k\ge i+1+k
\stackrel{(\ref{ijkr})}{>} j$; hence,
$\beta_{l+k}\stackrel{(\ref{beta1})}{=}\eta_{l+k-1}$, and
(\ref{alp_l}) follows from (\ref{eta_jkr}).
\end{enumerate}
From (\ref{bet_lkr}), (\ref{alp_l}) and Proposition
\ref{a_b_perem} it follows that $G(t, \, \tau, \, \xi, \, \eta)\ne
0$.

{\it Case $i\ne j-1+r-k$}. We take $t_*\in \Delta_{i-1,j-1}$.

Let $\tau\in (\tau_*-\delta, \, \tau_*)$, $|t-t_*|<\delta$. Then
$t\in \Delta_{i-1,j-1}$, $\tau \in \Delta_{i-1,j-1}$. Since
$i-1+k\ne j-1$, $i-1-r+k\ne j-1$ by (\ref{iknj}), we have $G(t, \,
\tau, \, \xi, \, \eta)\ne 0$ by Lemma \ref{posit1}.

Let now $\tau \in (\tau_*, \, \tau_*+\delta)$, $\delta>0$ is
sufficiently small. Then
\begin{align}
\label{alpha2} \alpha_l=\xi_l, \quad l\le i-2, \quad
\alpha_{i-1}=t, \quad \alpha_l=\xi_{l-1}, \quad l\ge i,
\end{align}
\begin{align}
\label{beta2} \beta_l=\eta_l, \quad l\le j-1, \quad \beta_j=\tau,
\quad \beta_l=\eta_{l-1}, \quad l\ge j+1.
\end{align}

We prove that
\begin{align}
\label{alblk} \alpha_l< \beta_{l+k}.
\end{align}
\begin{enumerate}
\item If $l\le i-2$, then $\alpha_l\stackrel{(\ref{alpha2})}{=}\xi_l$.
In the case $l+k\le j-1$ we get $\beta_{l+k}
\stackrel{(\ref{beta2})}{=}\eta_{l+k}$, and (\ref{alblk}) follows
from (\ref{eta_jkr}). If $l+k=j$, then
$\beta_{l+k}\stackrel{(\ref{beta2})}{=} \tau>\xi_{i-1}>\xi_l$. If
$l+k\ge j+1$, then
$\beta_{l+k}\stackrel{(\ref{beta2})}{=}\eta_{l+k-1}\ge
\eta_j>t>\xi_{i-2}\ge \xi_l$.
\item If $l=i-1$, then $\alpha_l \stackrel{(\ref{alpha2})}{=} t$, $\beta_{l+k}=\beta_{i-1+k}$.
By (\ref{ijkr}), $j\le i+k$. If $j\le i-1+k$, then $\beta
_{i-1+k}\ge \beta _j \stackrel{(\ref{beta2})}{=} \tau>t$. Let
$j=i+k$. Then $\beta_{i-1+k}=\beta _{j-1}
\stackrel{(\ref{beta2})}{=}\eta_{j-1}>t$.
\item If $l\ge i$, then $\alpha_l\stackrel{(\ref{alpha2})}{=}\xi_{l-1}$. We have $l+k\ge i+k
\stackrel{(\ref{ijkr})}{\ge} j$. Let $l=i$. Then
$\alpha_l=\xi_{i-1}<\tau \stackrel{(\ref{beta2})}{=} \beta_j\le
\beta_{l+k}$. If $l\ge i+1$, then
$\alpha_l=\xi_{l-1}\stackrel{(\ref{eta_jkr})}{<} \eta_{l+k-1}
\stackrel{(\ref{beta2})}{=}\beta_{l+k}$ (the last equality holds
since $l+k\ge i+1+k \stackrel{(\ref{ijkr})}{\ge} j+1$).
\end{enumerate}

We prove that
\begin{align}
\label{blkr} \beta_{l+k-r}<\alpha_l.
\end{align}
\begin{enumerate}
\item If $l\le i-2$, then $\alpha_l\stackrel{(\ref{alpha2})}{=}\xi_l$. We show that $l+k-r\le
j-1$. Indeed, by (\ref{ijkr}), $i\le j+r-k$; hence, $l+k-r \le
i-2+k-r\le j-1$. Therefore, $\beta _{l+k-r}
\stackrel{(\ref{beta2})}{=}\eta_{l+k-r}$, and (\ref{blkr}) follows
from the alternation condition for $\xi, \, \eta$.
\item If $l=i-1$, then $\alpha_l\stackrel{(\ref{alpha2})}{=} t>\eta_{j-2}$.
Therefore, it is sufficient to prove that $\beta_{l+k-r}\le
\eta_{j-2}$. By (\ref{ijkr}) and (\ref{iknj}), $i\le j+r-k-1$;
hence, $l+k-r=i-1+k-r\le j-2$. Therefore, $\beta_{l+k-r}
\stackrel{(\ref{beta2})}{=}\eta_{l+k-r}\le \eta_{j-2}$.
\item If $l\ge i$, then $\alpha_l \stackrel{(\ref{alpha2})}{=} \xi_{l-1}$.
\begin{itemize}
\item Let $l+k-r\ge j+1$. Then $\beta_{l+k-r}
\stackrel{(\ref{beta2})}{=} \eta_{l+k-r-1}$, and (\ref{blkr})
follows from the alternation condition for $\xi, \, \eta$.
\item If $l+k-r=j$, then $\beta_{l+k-r} \stackrel{(\ref{beta2})}{=}
\tau<\xi_i$. Hence, it is sufficient to check that $\xi_{l-1}\ge
\xi_i$, which means that $l-1\ge i$. Suppose the contrary: let
$l=i$. Then $i=j+r-k$, which contradicts with (\ref{iknj}).
\item Let $l+k-r\le j-1$. Then
$\beta_{l+k-r}\stackrel{(\ref{beta2})}{=}\eta_{l+k-r}$, and
(\ref{blkr}) is equivlent to the inequality
$\xi_{l-1}>\eta_{l+k-r}$. We have $\xi_{l-1}\ge
\xi_{i-1}=\eta_{j-1}\ge \eta_{l+k-r}$; in addition,
$\xi_{l-1}=\eta_{l+k-r}$ only if $l=i$, $l+k-r=j-1$. Then
$i=j-1+r-k$, which contradicts with our assumption.
\end{itemize}
\end{enumerate}

From (\ref{alblk}), (\ref{blkr}) and Proposition \ref{a_b_perem}
it follows that $G(t, \, \tau, \, \xi, \, \eta)\ne 0$. This
completes the proof.
\end{proof}
\begin{Cor}
\label{r32l} Let $\tau_*\in (0, \, 1)$. Suppose that $r\ge 3$, or
$r=2$ and $\tau_*\notin\{\eta_j:\; 1\le j\le n, \;
\eta_j=\xi_j\}$. Then there exist $\delta>0$ and $t_*\in (0, \,
1)$ such that $G(t, \, \tau, \, \xi, \, \eta) \ne 0$ for a.e. $(t,
\, \tau) \in (t_*-\delta, \, t_*+\delta) \times (\tau_*-\delta, \,
\tau_*+\delta)$.
\end{Cor}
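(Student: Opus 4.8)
The plan is to reduce Corollary \ref{r32l} to the three technical lemmas (Lemma \ref{posit1}, Lemma \ref{posit2}, Lemma \ref{posit3}) by a case analysis on the location of $\tau_*\in(0,\,1)$ relative to the grids $\{\xi_i\}$ and $\{\eta_j\}$. Since $\tau_*$ is an arbitrary interior point, there is a unique index $j$ with $\eta_{j-1}\le\tau_*<\eta_j$ (using $\eta_0=0$, $\eta_{n+1}=1$) and a unique $i$ with $\xi_{i-1}\le\tau_*<\xi_i$. The goal is always the same conclusion: produce $t_*$ and $\delta>0$ making $G$ nonzero on a full two-dimensional neighborhood. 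So first I would fix these indices and observe that $\Delta_{i,j}=(\xi_{i-1},\,\xi_i)\cap(\eta_{j-1},\,\eta_j)$ is the relevant cell, with $\tau_*$ lying on its lower-left corner structure.

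The natural trichotomy is: (a) $\tau_*$ lies strictly inside a cell, i.e. $\tau_*\ne\xi_{i-1}$ and $\tau_*\ne\eta_{j-1}$ simultaneously — but more precisely $\tau_*\in\Delta_{i,j}$ as an interior point means $\xi_{i-1}<\tau_*$ and $\eta_{j-1}<\tau_*$; (b) $\tau_*$ coincides with exactly one grid point, i.e. $\xi_{i-1}=\tau_*\ne\eta_{j-1}$ or $\eta_{j-1}=\tau_*\ne\xi_{i-1}$, which is precisely the hypothesis $\xi_{i-1}\ne\eta_{j-1}$ of Lemma \ref{posit2}; and (c) the doubly-degenerate case $\tau_*=\xi_{i-1}=\eta_{j-1}$, covered by Lemma \ref{posit3}. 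In case (a) I would invoke Lemma \ref{posit1}: choosing $t_*\in\Delta_{i,j}$ and taking $t$ near $t_*$ and $\tau$ near $\tau_*$ keeps both in $\Delta_{i,j}$, and the subcases $j=i-r+k$ or $j=i+k$ impose only a one-sided constraint $t>\tau$ or $t<\tau$, which I can always satisfy on a neighborhood by selecting $t_*$ on the appropriate side of $\tau_*$. Case (b) is exactly Lemma \ref{posit2}, and case (c) is Lemma \ref{posit3}.

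The only subtlety, and the reason $r$ enters the hypothesis, is that Lemma \ref{posit3} requires $r\ge 3$. The plan is therefore to handle the doubly-degenerate case $\tau_*=\xi_{i-1}=\eta_{j-1}$ under the blanket assumption $r\ge 3$ directly via Lemma \ref{posit3}, while under $r=2$ this case can only arise when $\tau_*=\xi_{i-1}=\eta_{j-1}$ with consecutive indices forced by the alternation condition $\eta_{j+k-r}<\xi_j<\eta_{j+k}$ (which for $r=2$, $k=1$ is tight enough that such a coincidence forces $\xi_{j}=\eta_{j}$ for the corresponding index). Hence for $r=2$ I explicitly exclude $\tau_*$ from the finite bad set $\{\eta_j:\;1\le j\le n,\;\eta_j=\xi_j\}$, exactly as in the statement, so that the doubly-degenerate case never occurs and only Lemmas \ref{posit1} and \ref{posit2} are needed.

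The step I expect to be most delicate is verifying in case (b) and the $r=2$ reduction that the coincidence pattern of $\tau_*$ with the grid points is correctly translated into the index hypotheses of Lemmas \ref{posit2} and \ref{posit3}; in particular I must check that when $r=2$ and $\tau_*\notin\{\eta_j:\eta_j=\xi_j\}$, one genuinely cannot have $\xi_{i-1}=\eta_{j-1}$ for the indices determined by $\tau_*$. This is a short argument from the alternation condition (\ref{eta_jkr}): a coincidence $\xi_{i-1}=\eta_{j-1}$ would, after matching indices, force some $\xi_\ell=\eta_\ell$, i.e. place $\tau_*$ in the excluded set, giving a contradiction. Once this bookkeeping is settled, the corollary follows immediately by quoting the appropriate lemma in each case.
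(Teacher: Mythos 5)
Your proposal is correct and follows essentially the same route as the paper, which states Corollary \ref{r32l} without proof precisely because it is the assembly of Lemmas \ref{posit1}, \ref{posit2} and \ref{posit3} via the trichotomy you describe (interior point of a cell, single coincidence $\xi_{i-1}\ne\eta_{j-1}$, double coincidence $\xi_{i-1}=\eta_{j-1}$). Your bookkeeping is also sound: choosing $t_*$ strictly on the correct side of $\tau_*$ handles the one-sided subcases of Lemma \ref{posit1}, and for $r=2$, $k=1$ the alternation condition $\eta_{l-1}<\xi_l<\eta_{l+1}$ indeed forces any coincidence $\xi_{i-1}=\eta_{j-1}$ to have $j=i$, so the doubly-degenerate case is exactly the excluded set $\{\eta_j:\;\eta_j=\xi_j\}$.
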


\section{Auxiliary assertions for $r=2$}

In this section we obtain analogues of Lemmas 10 and 11 from
\cite{vas_spn}. The main part of the proof is similar to arguments
from \cite[\S 5]{vas_spn}.

Since $r=2$, $1\le k\le r-1$, we have $k=1$. From (\ref{htt}) it
follows that
\begin{align}
\label{httt} H(t, \, \tau) =\min\{t, \, \tau\}.
\end{align}

Since the operator $\tilde I^k_{r,g,v}:L_p[0, \, 1] \rightarrow
L_q[0, \, 1]$ is bounded, from (\ref{gv}) it follows that
\begin{align}
\label{int_tvq} \int \limits _0^T v^q(t) t^q\, dt<\infty
\quad\text{for any } 0<T<1.
\end{align}

Let $(\overline{x}, \, \overline{y}, \overline{\theta}) \in SP_n$.
Then (see (\ref{eq_main}))
\begin{align}
\label{xy_spn2} \left\{\begin{array}{l} \ddot{\overline{x}}
=-g^{p'}\overline{y}_{(p')}, \quad \ddot{\overline{y}} =-
\overline{\theta}^q v^q
\overline{x}_{(q)}, \\
\overline{x}(0)=\dot{\overline{x}}(1) =\overline{y}(0)=
\dot{\overline{y}}(1)=0, \\ \left\|
\frac{\ddot{\overline{x}}}{g}\right\| _{L_p[0, \, 1]} =1,
\end{array} \right.
\end{align}
$0<\xi_1<\xi_2<\dots <\xi_n<1$ are points of sign change of the
function $\overline{x}$, $0<\eta_1<\eta_2<\dots<\eta_n<1$ are
points of sign change of the function $\overline{y}$. From the
Rolle's theorem and Proposition \ref{per_zn} it follows that
$\dot{\overline{x}}$ has exactly $n$ points of sign change.
Without loss of generality we may assume that $\overline{x}(t)>0$
in the right punctured neighborhood of zero. Hence,
$\dot{\overline{x}}(t)>0$ in the right punctured neighborhood of
zero. Then
\begin{enumerate}
\item if $n$ is even, then $\dot{\overline{x}}(t)>0$ and
$\ddot{\overline{x}}(t)\le 0$ in the left punctured neighborhood
of 1. By the first equation (\ref{xy_spn2}), $\overline{y}(t)>0$
in the left punctured neighborhood of 1. Since $n$ is even,
$\overline{y}(t)>0$ in the right punctured neighborhood of zero.
\item if $n$ is odd, then $\dot{\overline{x}}(t)<0$ and
$\ddot{\overline{x}}(t)\ge 0$ in the left punctured neighborhood
of 1. By the first equation (\ref{xy_spn2}), $\overline{y}(t)<0$
in the left punctured neighborhood of 1. Since $n$ is,
$\overline{y}(t)>0$ in the right punctured neighborhood of zero.
\end{enumerate}
Thus,
\begin{align}
\label{sign} \overline{x}(t)>0, \quad \overline{y}(t)>0 \quad
\text{in the right neighborhood of zero}.
\end{align}

Denote $\xi_0=\eta_0=0$, $\xi_{n+1}=\eta_{n+1}=1$.

Let
\begin{align}
\label{lambda_def} \Lambda:= \{l=\overline{1, \, n}:\;
\eta_i=\xi_i\} \ne \varnothing.
\end{align}
We denote the elements of $\Lambda$ by $l_s$, $1\le s\le m$. Also
we set $l_0=0$, $l_{m+1}=n+1$.

For $f\in W^{2,1}_{p,g}[0, \, 1]$ we denote
\begin{align}
\label{ql_f} Q_Lf(t) =\int \limits _0^1 G(t, \, \tau, \, \xi, \,
\eta) \ddot f(\tau)\, d\tau.
\end{align}
Then
\begin{align}
\label{qlxij0} Q_Lf(\xi_j)\stackrel{(\ref{il01g})}{=}0, \quad 1\le
j\le n,
\end{align}
\begin{align}
\label{pl_incl} P_Lf:=f+Q_Lf\stackrel{(\ref{xt1rk}), (\ref{ql_f})}
{\in} L_n:= \left\{ \sum \limits _{j=1}^n c_j h_j:\; c_j\in \R,
\quad 1\le j\le n\right\},
\end{align}
where
\begin{align}
\label{hjt} h_j(t)=H(t, \, \eta_j)\stackrel{(\ref{httt})}{=}\min
\{t, \, \eta_j\}.
\end{align}

Let $\psi _1(t) =\min \{t, \, \eta_1\}$. For $2\le j\le n$ we set
\begin{align}
\label{psi_j_def} \psi_j(t)=\left\{
\begin{array}{l} 0, \quad 0\le t\le \eta_{j-1},
\\ t-\eta_{j-1}, \quad \eta_{j-1}\le t\le \eta_j, \\
\eta_j-\eta_{j-1}, \quad \eta_j\le t\le 1.\end{array}\right.
\end{align}
Then
\begin{align}
\label{hj_sum} h_j=\sum \limits_{i=1}^j \psi_i;
\end{align}
hence,
\begin{align}
\label{l_n_psi_j} L_n={\rm span}\, \{\psi_j\}_{j=1}^n.
\end{align}

For $0\le s\le m-1$ we denote $$\eta^{(s)} = (\eta _{l_s+1}, \,
\eta _{l_s+2}, \, \dots, \, \eta _{l_{s+1}}), \quad \xi^{(s)} =
(\xi _{l_s+1}, \, \xi _{l_s+2}, \, \dots, \, \xi _{l_{s+1}}).$$ In
addition, we set $\eta^{(m)} = (\eta _{l_m+1}, \, \eta _{l_m+2},
\, \dots, \, \eta _n)$, $\xi^{(m)} = (\xi _{l_m+1}, \, \xi
_{l_m+2}, \, \dots, \, \xi _n)$. For $f\in W^{2,1}_{p,g}[0, \,
1]$, $0\le s\le m$ we set
$$
Q^0_s f(t)= \int \limits _{\eta_{l_s}} ^{\eta_{l_{s+1}}} G(t, \,
\tau, \, \xi ^{(s)}, \, \eta^{(s)}) \ddot f(\tau)\, d\tau.
$$

The following assertion is the analogue of Lemma 5 from
\cite{vas_spn}.

\begin{Lem}
\label{lem_ogr} The equality $Q_Lf|_{[\eta_{l_s}, \,
\eta_{l_{s+1}}]} =Q^0_sf$ holds.
\end{Lem}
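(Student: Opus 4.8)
The plan is to reduce the stated equality to two pointwise facts about the kernel and then read them off from the determinant representation (\ref{gttxi_eta}), whose entries are all of the form $H(\cdot,\cdot)=\min(\cdot,\cdot)$ by (\ref{httt}). Write $\zeta_s:=\eta_{l_s}=\xi_{l_s}$ for the pinch points (the two values coincide by (\ref{lambda_def})), with $\zeta_0=0$ and $\zeta_{m+1}=1$. Since $Q_Lf(t)=\int_0^1 G(t,\tau,\xi,\eta)\ddot f(\tau)\,d\tau$ by (\ref{ql_f}), it suffices to prove, for each $s$, that: (i) $G(t,\tau,\xi,\eta)=0$ for a.e. $\tau\notin[\zeta_s,\zeta_{s+1}]$ whenever $t\in[\zeta_s,\zeta_{s+1}]$; and (ii) $G(t,\tau,\xi,\eta)=G(t,\tau,\xi^{(s)},\eta^{(s)})$ whenever $t,\tau\in[\zeta_s,\zeta_{s+1}]$. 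Granting these, for $t\in[\zeta_s,\zeta_{s+1}]=[\eta_{l_s},\eta_{l_{s+1}}]$ the integral over $[0,1]$ collapses by (i) to the integral over $[\eta_{l_s},\eta_{l_{s+1}}]$, and there the kernel is replaced by the block kernel by (ii); this is exactly $Q_Lf(t)=Q^0_sf(t)$.

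For (i) I would use the elementary collapse $\min(c,\rho)=c$ for $c\le\rho$. Suppose $\tau>\zeta_{s+1}$ (the case $\tau<\zeta_s$ is symmetric, interchanging the roles of rows and columns about $\zeta_s$). In the matrix of (\ref{gttxi_eta}) every column argument among $\{t,\xi_1,\dots,\xi_{l_{s+1}}\}$ is $\le\zeta_{s+1}$ and every row argument among $\{\tau,\eta_{l_{s+1}},\dots,\eta_n\}$ is $\ge\zeta_{s+1}$, so in that sub-block each entry equals its own column argument and therefore all these rows coincide. Subtracting one of them from the others leaves $n-l_{s+1}+1$ rows supported only on the $n-l_{s+1}$ columns with argument $>\zeta_{s+1}$; having more such rows than columns, they are linearly dependent and the determinant vanishes. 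The count is uniform in $s$ and covers $\tau$ in any block to the right (resp. left).

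For (ii), with $t,\tau\in[\zeta_s,\zeta_{s+1}]$, I would peel off the outside-block indices. Using the same collapse of $\min$ across $\zeta_{s+1}$ (resp. $\zeta_s$), subtracting the boundary column $\xi_{l_{s+1}}$ from the columns to its right, and symmetrically the boundary row and the left-hand columns, triangulates the outside-block part of the matrix of (\ref{gttxi_eta}) and factors out a scalar. The very same row and column operations applied to the denominator $C$ of (\ref{chg0}) factor out the identical scalar, because it is built only from the outside-block entries, which are shared by numerator and $C$; hence it cancels in the quotient. What remains is the quotient of the block determinants with every argument decreased by $\zeta_s$; since $\min(c,\rho)-\zeta_s=\min(c-\zeta_s,\rho-\zeta_s)$, both are min-determinants of the shifted block configuration, and a rank-one update computation (or the node relations (\ref{il01g})) shows that this common shift does not change the quotient, both sides vanishing together when it would. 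This identifies the residual quotient with $G(t,\tau,\xi^{(s)},\eta^{(s)})$, positivity of whose denominator is guaranteed by Lemma \ref{lem_kg0} and the alternation condition (\ref{eta_jkr}) restricted to the block.

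The main obstacle is the bookkeeping in (ii) for an interior block, where outside indices sit on both sides of $[\zeta_s,\zeta_{s+1}]$ and the factorization must be arranged so that the numerator determinant and $C$ acquire exactly the same scalar, uniformly over $s$ (including the end blocks, where $\zeta_0=0$ or $\zeta_{m+1}=1$ and one side is absent). I would organize this by induction on the number of pinch points lying outside the block, peeling one boundary block at a time, each step being the single min-collapse already used in (i). The vanishing statement (i) is short once the dimension count is set up, so the determinant factorization and the cancellation of common factors in (ii) are where the care is needed.
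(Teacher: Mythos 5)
Your overall strategy --- reducing the lemma to the two pointwise kernel identities (i) and (ii) --- is a genuinely different route from the paper's. The paper never evaluates $G$ pointwise on the blocks: using (\ref{q0s}) and the block analogue (\ref{il_e_ls}) of (\ref{xt1rk}), it shows that both $Q_Lf$ and $Q^0_sf$, restricted to $[\eta_{l_s}, \, \eta_{l_{s+1}}]$, differ from $f(\eta_{l_s})-f(t)$ by elements of ${\rm span}\,\{\psi_j\}_{j=l_s+1}^{l_{s+1}}$, that both vanish at the interior nodes $\xi_{l_s+1}, \, \dots, \, \xi_{l_{s+1}}$, and that the interpolation matrix $(\psi_j(\xi_i))$ is nonsingular, whence the two corrections coincide. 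Your claim (i) is true and your row-counting proof of it is correct; moreover, a pointwise identity (ii) would even make the boundary condition $\dot f(1)=0$, which the paper needs for the block $s=m$, superfluous.

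However, the last step of your (ii) is false: the quotient of min-determinants is \emph{not} invariant under subtracting the common constant $\eta_{l_s}$ from all arguments, and no rank-one-update cancellation rescues this. Concretely, take $n=1$, $\xi_1=\eta_1=\zeta\in(0, \, 1)$, so $m=1$, $l_1=1$ and $\xi^{(1)}=\eta^{(1)}=\varnothing$. For $t, \, \tau\in[\zeta, \, 1]$ your peeling (correctly) yields $G(t, \, \tau, \, \xi, \, \eta)=\min(t, \, \tau)-\zeta$, whereas the unshifted block quotient is $\min(t, \, \tau)$; the two differ by the constant $\zeta$, and the corresponding integrals against $\ddot f$ differ by $\zeta\int_\zeta^1 \ddot f(\tau)\, d\tau=-\zeta\dot f(\zeta)$ (recall $\dot f(1)=0$), which is nonzero for generic $f\in W^{2,1}_{p,g}[0, \, 1]$. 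What your row and column operations actually prove is that on the block the full kernel equals the kernel built from the \emph{shifted} base kernel $\min(t, \, \tau)-\eta_{l_s}$, i.e.\ the kernel of (\ref{gtt_xi})--(\ref{gttxi_eta}) constructed for the interval $[\eta_{l_s}, \, \eta_{l_{s+1}}]$ with nodes $\xi^{(s)}, \, \eta^{(s)}$. That block-adapted kernel is the only reading of $G(t, \, \tau, \, \xi^{(s)}, \, \eta^{(s)})$ in the definition of $Q^0_s$ under which the lemma (and the paper's own step (\ref{il_e_ls})) is true; under the unshifted reading you are steering toward, the lemma itself fails by the same counterexample. The fix is therefore to delete the shift-invariance claim, interpret $Q^0_s$ with the kernel adapted to $[\eta_{l_s}, \, \eta_{l_{s+1}}]$, and stop at the shifted quotient: with that correction your argument is complete and is a clean alternative to the paper's interpolation argument.
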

\begin{proof}
Let $0\le s\le m-1$. By direct calculations it can be checked that
\begin{align}
\label{q0s} \int \limits _{\eta_{l_s}}^{\eta_{l_{s+1}}} \min
\{t-\eta_{l_s}, \, \tau-\eta_{l_s}\}\ddot f(\tau)\, d\tau=
-f(t)+f(\eta_{l_s})+(t-\eta_{l_s}) \dot f(\eta_{l_{s+1}}).
\end{align}
In addition, $$(-f(t)+f(\eta_{l_s})+(t-\eta_{l_s}) \dot
f(\eta_{l_{s+1}}))|_{t=\eta_{l_s}}=0,$$
$$\left.\frac{d}{dt}(-f(t)+f(\eta_{l_s})+(t-\eta_{l_s}) \dot
f(\eta_{l_{s+1}})) \right|_{t=\eta_{l_{s+1}}} =0.$$ Therefore,
\begin{align}
\label{il_e_ls} \int \limits _{\eta_{l_s}} ^{\eta_{l_{s+1}}}
\min\{t-\eta_{l_s}, \, \tau -\eta_{l_s}\} \ddot f(\tau)\, d\tau
-Q^0_sf(t) \in {\rm span} \{\psi_j\}_{j=l_s+1} ^{l_{s+1}}
\end{align}
(it is the analogue of formulas (\ref{pl_incl}) and
(\ref{l_n_psi_j})). Since for any $t\in [\eta_{l_s}, \,
\eta_{l_{s+1}}]$ we have $t-\eta_{l_s} = \sum \limits
_{j=l_s+1}^{l_{s+1}} \psi_j(t)$, by (\ref{q0s}) and
(\ref{il_e_ls}) there are numbers $\{b_j\}_{j=l_k+1} ^{l_{s+1}}$
such that
\begin{align}
\label{ft} f(t)-f(\eta_{l_s})+Q^0_sf(t)=\sum \limits_{j=l_s+1}
^{l_{s+1}} b_j\psi_j(t), \quad \eta_{l_s}\le t\le \eta_{l_{s+1}}.
\end{align}

By (\ref{pl_incl}) and (\ref{hj_sum}), there are numbers
$\{c_j\}_{j=1} ^n$ such that $f(t)+Q_Lf(t)=\sum \limits _{j=1}^n
c_j\psi_j$. For $j\ge l_{s+1}+1$ we have $\psi_j|_{[\eta_{l_s}, \,
\eta_{l_{s+1}}]} \stackrel{(\ref{psi_j_def})}{\equiv} 0$, for
$j\le l_s$ we have $\psi_j|_{[\eta_{l_s}, \, \eta_{l_{s+1}}]}
\equiv {\rm const}$. Hence, there is $A\in \R$ such that
$f(t)+Q_Lf(t)=A+\sum \limits _{j=l_s+1}^{l_{s+1}} c_j\psi_j$.
Since $Q_Lf(\eta_{l_s})\stackrel{(\ref{lambda_def})}{=}
Q_Lf(\xi_{l_s})\stackrel{(\ref{qlxij0})}{=} 0$ and
$\psi_j(\eta_{l_s})\stackrel{(\ref{psi_j_def})}{=} 0$, $l_s+1\le
j\le l_{s+1}$, we get $A=f(\eta_{l_s})$ and
\begin{align}
\label{ft1} f(t)-f(\eta_{l_s})+Q_Lf(t)=\sum \limits_{j=l_s+1}
^{l_{s+1}} c_j\psi_j(t), \quad \eta_{l_s}\le t\le \eta_{l_{s+1}}.
\end{align}

Since $Q_Lf(\xi_i)\stackrel{(\ref{qlxij0})}{=}0$ and similarly
$Q^0_sf(\xi_i)=0$, $l_s+1\le i\le l_{s+1}$, we get from (\ref{ft})
and (\ref{ft1}) that
$$
\sum \limits_{j=l_s+1} ^{l_{s+1}} (c_j-b_j)\psi_j(\xi_i)=0, \quad
l_s+1\le i\le l_{s+1}.
$$
Let us prove that the matrix $(\psi_j(\xi_i))_{l_s+1\le i,\, j\le
l_{s+1}}$ is non-degenerate. By (\ref{hjt}) and (\ref{hj_sum}), it
holds if and only if the matrix $(\min(\xi_i, \,
\eta_j))_{l_s+1\le i,\, j\le l_{s+1}}$ is non-degenerate; the last
property follows from (\ref{httt}) and analogue of (\ref{chg0})
for $l_s+1\le i,\, j\le l_{s+1}$ (see Proposition \ref{peremez1}
and Lemma \ref{lem_kg0}).

Thus, $c_j=b_j$. This together with (\ref{ft}) and (\ref{ft1})
yields the assertion of Lemma for $s<m$.

Let $s=m$. Then from (\ref{q0s}), the condition $\dot f(1)=0$ and
the equality
$$
\int \limits _{\eta_{l_m}} ^1 \min\{t-\eta_{l_m}, \, \tau
-\eta_{l_m}\} \ddot f(\tau)\, d\tau -Q^0_mf(t) \in {\rm span}
\{\psi_j\}_{j=l_m+1} ^n
$$
it follows that
$$
f(t)-f(\eta_{l_m})+Q^0_mf(t)=\sum \limits_{j=l_m+1} ^n
b_j\psi_j(t), \quad \eta_{l_m}\le t\le 1.
$$
Similarly as formula (\ref{ft1}) it can be proved that
$$
f(t)-f(\eta_{l_m})+Q_Lf(t)=\sum \limits_{j=l_m+1} ^n c_j\psi_j(t),
\quad \eta_{l_m}\le t\le 1.
$$
After that we argue similarly as for $s<m$.
\end{proof}

For $0\le \alpha <\beta\le 1$ we set $\Phi _{[\alpha, \,
\beta]}(f) =\int \limits _\alpha ^\beta |vf|^q\, dt$. Denote
$$
\Phi_s(f) = \left\{ \begin{array}{l} \Phi_{[\eta_{l_s-1}, \, \eta
_{l_s}]}(f), \quad 1\le s\le m, \\ \Phi _{[\eta _{l_m}, \, \eta
_{l_m+1}]}(f), \quad s=m+1,\end{array} \right.
$$
\begin{align}
\label{vrph_s} \varphi_s=\psi_{l_s}, \quad 1\le s\le m,
\end{align}
\begin{align}
\label{vrph_m1} \varphi_{m+1} = \eta_{l_m+1}-\eta_{l_m}-
\psi_{l_m+1} \quad\text{for}\;l_m<n, \quad \varphi_{m+1}=1
\quad\text{for}\;l_m=n.
\end{align}

The following assertion can be proved by direct calculations (see,
e.g., \cite{fabian_hajek}, p. 268, Exercise 8.29).
\begin{Sta}
\label{diff} Let $\mu$ be a measure, $1<q<\infty$, $\Phi(f)=\int
\limits _0^1 |f(t)|^q\, d\mu(t)$. Then the function $\Phi$ has the
Lagrange variation, which is equal to
\begin{align}
\label{phi_f_h} \Phi'(f)[h] =q\int \limits_0^1 (f)_{(q)} h\, dt.
\end{align}
\end{Sta}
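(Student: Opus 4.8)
The plan is to read $\Phi'(f)[h]$ as the directional (G\^ateaux) variation
\begin{align*}
\Phi'(f)[h]=\lim_{t\to 0}\frac{\Phi(f+th)-\Phi(f)}{t}
\end{align*}
and to evaluate it by differentiating under the integral sign. Since $\Phi$ is finite precisely on $L_q(\mu)$, I would carry out the computation for $f$, $h\in L_q(\mu)$, so that $\Phi(f)<\infty$.

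First I would analyse the scalar integrand. Because $1<q<\infty$, the function $s\mapsto |s|^q$ is continuously differentiable on all of $\R$, its derivative being $q|s|^{q-1}\sgn s=q\,s_{(q)}$; in particular this derivative vanishes at $s=0$, so no singularity occurs on the zero set of $f$. Consequently, for $\mu$-a.e.\ $x$,
\begin{align*}
\frac{d}{dt}\,|f(x)+th(x)|^q\Big|_{t=0}=q\,(f(x))_{(q)}\,h(x).
\end{align*}

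The only genuine obstacle is the interchange of the limit in $t$ with the integral, and I would justify it by Lagrange's mean value theorem together with dominated convergence. For $0<|t|\le 1$ there is, for each $x$, a number $\theta=\theta(x,t)\in(0,\,1)$ with
\begin{align*}
\frac{|f(x)+th(x)|^q-|f(x)|^q}{t}=q\,|f(x)+\theta t h(x)|^{q-1}\sgn\big(f(x)+\theta t h(x)\big)\,h(x),
\end{align*}
whose modulus is at most $q(|f|+|h|)^{q-1}|h|$. Using the elementary inequality $(a+b)^{q-1}\le C_q(a^{q-1}+b^{q-1})$ for $a$, $b\ge 0$, this is dominated by $C_q q\,(|f|^{q-1}|h|+|h|^q)$; by H\"older's inequality with exponents $q'=\frac{q}{q-1}$ and $q$ one has $\int_0^1 |f|^{q-1}|h|\,d\mu\le \|f\|_{L_q(\mu)}^{q-1}\|h\|_{L_q(\mu)}<\infty$, while $\int_0^1|h|^q\,d\mu<\infty$, so the majorant is $\mu$-integrable. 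The Lebesgue dominated convergence theorem then permits passing to the limit under the integral, which gives $\Phi'(f)[h]=q\int_0^1 (f)_{(q)}\,h\,d\mu$, i.e.\ formula (\ref{phi_f_h}). No step other than this dominated-convergence bound is delicate, the continuous differentiability of $|s|^q$ for $q>1$ being exactly what disposes of the zero set of $f$.
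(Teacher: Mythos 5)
Your proof is correct and is essentially the paper's own approach: the paper gives no details, merely asserting that the proposition "can be proved by direct calculations" with a citation to Fabian--H\'{a}jek (Exercise 8.29), and your computation --- G\^ateaux (Lagrange) variation, pointwise differentiation of $s\mapsto|s|^q$ using $q>1$, the mean value theorem, and dominated convergence with the H\"older-integrable majorant $C_q q\,(|f|^{q-1}|h|+|h|^q)$ --- is exactly that direct calculation, carried out on $L_q(\mu)$ where $\Phi$ is finite. Note only that the right-hand side should read $q\int_0^1 (f)_{(q)}h\,d\mu(t)$, as you in fact derived (and as the paper's later application with $d\mu=v^q\,dt$ confirms); the $dt$ in the paper's display (\ref{phi_f_h}) is an abuse of notation.
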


\begin{Rem}
This function is Fr\'{e}chet differentiable and even in a more
strong sense; it follows from the uniform smoothness of the space
$L_q$ for $1<q<\infty$ (see \cite{fabian_hajek}, \S 9).
\end{Rem}

The following lemma is similar to Proposition 6 from
\cite{vas_spn}.
\begin{Lem}
\label{lem_ineq} The following inequalities hold:
$$
\dot{\overline{x}}(\eta_{l_s}) \Phi_s'(\overline{x}) [\varphi_s]
<0, \quad 1\le s\le m, \quad \dot{\overline{x}}(\eta_{l_m})
\Phi'_{m+1}(\overline{x}) [\varphi_{m+1}] >0.
$$
\end{Lem}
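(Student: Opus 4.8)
The plan is to avoid analysing directly the sign of $\overline{x}$ on the intervals of integration (which is delicate, since an extra sign-change point $\xi_i$ may fall inside $[\eta_{l_s-1},\eta_{l_s}]$), and instead to transfer the derivative onto $\overline{y}$ through the differential equation and then integrate by parts. First I would apply Proposition \ref{diff} with the measure $v^q\,dt$ restricted to the relevant interval. For $1\le s\le m$ this gives
\[
\Phi_s'(\overline{x})[\varphi_s]=q\int\limits_{\eta_{l_s-1}}^{\eta_{l_s}}\overline{x}_{(q)}\,\varphi_s\,v^q\,dt,
\]
and analogously for $\Phi_{m+1}'(\overline{x})[\varphi_{m+1}]$ on $[\eta_{l_m},\eta_{l_m+1}]$. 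Then, writing the second equation in (\ref{xy_spn2}) as $v^q\overline{x}_{(q)}=-\overline{\theta}^{-q}\ddot{\overline{y}}$, each of these integrals becomes $-q\overline{\theta}^{-q}\int\ddot{\overline{y}}\,\varphi\,dt$.

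Next I would integrate by parts. By (\ref{psi_j_def}), (\ref{vrph_s}), (\ref{vrph_m1}) the function $\varphi_s=\psi_{l_s}$ equals $t-\eta_{l_s-1}$ on $[\eta_{l_s-1},\eta_{l_s}]$, so $\dot\varphi_s\equiv 1$ there; likewise $\varphi_{m+1}$ equals $\eta_{l_m+1}-t$ on $[\eta_{l_m},\eta_{l_m+1}]$ (and equals $1$ in the exceptional case $l_m=n$, where the right endpoint is $\eta_{n+1}=1$). The key point is that every surviving boundary contribution can be evaluated: since $\dot\varphi$ is constant on the interval, the term $\int\dot{\overline{y}}\,\dot\varphi\,dt$ reduces to a difference of the values $\overline{y}(\eta_j)$, all of which vanish because the $\eta_j$ are sign-change points of $\overline{y}$ (and $\overline{y}(\eta_0)=\overline{y}(0)=0$); in the case $l_m=n$ one uses instead the boundary condition $\dot{\overline{y}}(1)=0$ from (\ref{xy_spn2}). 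Thus only the endpoint term carrying $\dot{\overline{y}}$ at $\eta_{l_s}$ (resp. $\eta_{l_m}$) remains, yielding
\[
\Phi_s'(\overline{x})[\varphi_s]=-q\overline{\theta}^{-q}(\eta_{l_s}-\eta_{l_s-1})\,\dot{\overline{y}}(\eta_{l_s}),\qquad \Phi_{m+1}'(\overline{x})[\varphi_{m+1}]=q\overline{\theta}^{-q}(\eta_{l_m+1}-\eta_{l_m})\,\dot{\overline{y}}(\eta_{l_m}),
\]
with $\eta_{l_m+1}$ replaced by $1$ when $l_m=n$. As all scalar prefactors are positive, both asserted inequalities reduce to the single relation $\dot{\overline{x}}(\eta_{l_s})\dot{\overline{y}}(\eta_{l_s})>0$.

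Finally I would establish this sign relation. Because $l_s\in\Lambda$, the point $\eta_{l_s}=\xi_{l_s}$ is simultaneously a sign-change point of $\overline{x}$ and of $\overline{y}$, at which $\dot{\overline{x}}$ and $\dot{\overline{y}}$ are nonzero by Proposition \ref{per_zn}. Using (\ref{sign}), both $\overline{x}$ and $\overline{y}$ are positive just to the right of $0$ and change sign at every $\xi_i$, resp. $\eta_i$; hence $\overline{x}$ has sign $(-1)^{i-1}$ on $(\xi_{i-1},\xi_i)$ and $\overline{y}$ has sign $(-1)^{i-1}$ on $(\eta_{i-1},\eta_i)$, so $\sgn\dot{\overline{x}}(\xi_{l_s})=\sgn\dot{\overline{y}}(\eta_{l_s})=(-1)^{l_s}$ and their product is positive. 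This gives $\dot{\overline{x}}(\eta_{l_s})\Phi_s'(\overline{x})[\varphi_s]<0$ for $1\le s\le m$ and $\dot{\overline{x}}(\eta_{l_m})\Phi_{m+1}'(\overline{x})[\varphi_{m+1}]>0$. I expect the only real obstacle to be bookkeeping rather than conceptual: one must check carefully that all boundary terms in the integration by parts cancel, treating the degenerate endpoints $\eta_0=0$ (when $l_s=1$) and $\eta_{n+1}=1$ (when $l_m=n$) by invoking the boundary conditions $\overline{y}(0)=0$ and $\dot{\overline{y}}(1)=0$ in place of the interior vanishing $\overline{y}(\eta_j)=0$.
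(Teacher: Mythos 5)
Your strategy coincides with the paper's: apply Proposition \ref{diff}, substitute $v^q\overline{x}_{(q)}=-\overline{\theta}^{-q}\ddot{\overline{y}}$ via (\ref{xy_spn2}), integrate by parts so that everything reduces to $\dot{\overline{y}}(\eta_{l_s})$ (resp.\ $\dot{\overline{y}}(\eta_{l_m})$, or $\dot{\overline{y}}(1)=0$ when $l_m=n$), and finish with the sign relation $\dot{\overline{x}}(\eta_{l_s})\dot{\overline{y}}(\eta_{l_s})>0$ deduced from (\ref{sign}) and Proposition \ref{per_zn}. The resulting formulas and the sign analysis are correct and agree with the paper's (\ref{i_eq_adot})--(\ref{i_eq_adot3}).

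However, there is a genuine gap in the one case you dismiss as ``bookkeeping'': $s=1$ with $l_1=1$, where the interval is $[0,\eta_1]$ and $\varphi_1(t)=t$. The boundary term produced at the lower endpoint by integration by parts is $\lim_{\varepsilon\to 0+}\varepsilon\,\dot{\overline{y}}(\varepsilon)$, not a value of $\overline{y}$; the condition $\overline{y}(0)=0$ only disposes of the term $\int\dot{\overline{y}}\,\dot{\varphi}_1\,dt$. By (\ref{eq_main}), $\dot{\overline{y}}$ is only locally absolutely continuous on $(0,\,1]$, and since the weights may be strongly singular at $0$ (this is the whole point of the weighted setting, cf.\ Section 6), $\ddot{\overline{y}}=-\overline{\theta}^{q}v^q\overline{x}_{(q)}$ need not be integrable near $0$ and $\dot{\overline{y}}$ may be unbounded there; so one can neither integrate by parts directly on the closed interval $[0,\eta_1]$ nor read off $\varphi_1(0)\dot{\overline{y}}(0)=0$. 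The paper devotes a full paragraph to exactly this point: it first shows that $t\,\ddot{\overline{y}}(t)$ is Lebesgue integrable near $0$ (using $\overline{x}\in L_{q,v}[0,\,1]$, the bound (\ref{int_tvq}) coming from boundedness of the operator, and H\"older's inequality), then integrates by parts on $[\varepsilon,\,\eta_1]$, notes that the limit $c=\lim_{\varepsilon\to 0+}\varepsilon\dot{\overline{y}}(\varepsilon)$ exists, and rules out $c\ne 0$ because then $|\dot{\overline{y}}(t)|\ge |c|/(2t)$ near $0$ would contradict the integrability of $\dot{\overline{y}}$ forced by the absolute continuity of $\overline{y}$ on $[0,\,1)$. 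Without this (or an equivalent) argument, your formula $\Phi_1'(\overline{x})[\varphi_1]=-q\overline{\theta}^{-q}\eta_1\dot{\overline{y}}(\eta_1)$ in the case $l_1=1$ is unjustified. The remaining cases ($l_s\ge 2$, and $s=m+1$ with $l_m<n$ or $l_m=n$) are fine as you describe, since there the relevant endpoints are interior points, where $\dot{\overline{y}}$ is finite, or the regular endpoint $1$.
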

\begin{proof}
We first consider the case $1\le s\le m$. We have
\begin{align}
\label{phi_pr_s} \Phi'_s(\overline{x})[\varphi _s]
\stackrel{(\ref{psi_j_def}), (\ref{vrph_s}), (\ref{phi_f_h})}{=} q
\int \limits _{\eta _{l_s-1}} ^{\eta_{l_s}} v^q(t)
\overline{x}_{(q)}(t) (t-\eta _{l_s-1}) \, dt
\stackrel{(\ref{xy_spn2})}{=} -q\overline{\theta}^{\, -q} \int
\limits _{\eta _{l_s-1}} ^{\eta_{l_s}} \ddot{\overline{y}}(t)
(t-\eta _{l_s-1})\, dt=:I_s.
\end{align}
If $l_s>1$, then $\dot{\overline{y}} \in AC[\eta_{l_s-1}, \,
\eta_{l_s}]$; we apply the formula of integration by parts and
obtain
$$
\int \limits _{\eta _{l_s-1}} ^{\eta_{l_s}} \ddot{\overline{y}}(t)
(t-\eta _{l_s-1})\, dt= (t-\eta
_{l_s-1})\dot{\overline{y}}(t)|_{\eta_{l_s-1}}^{\eta_{l_s}} -\int
\limits _{\eta _{l_s-1}} ^{\eta_{l_s}} \dot{\overline{y}}(t)\, dt=
(\eta_{l_s} -\eta_{l_s-1})\dot{\overline{y}}(\eta_{l_s})
$$
(here we apply the equalities $\overline{y}(\eta_j)=0$, $1\le j\le
n$). Hence, there exists $A_s>0$ such that
\begin{align}
\label{i_eq_adot} I_s=-A_s \dot{\overline{y}}(\eta_{l_s}), \quad
1\le s\le m, \quad l_s>1.
\end{align}
Let $s=1$ and $l_1=1$. Then $\eta_{l_1-1}=0$. Since
$\overline{x}\in L_{q,v}[0, \, 1]$, we have
$v^{q-1}\overline{x}_{(q)} \in L_{q'}[0, \, 1]$. This together
with (\ref{int_tvq}) and the H\"{o}lder inequality yields that the
function $v^q(t) \overline{x}_{(q)}(t) t$ is Lebesgue integrable.
By (\ref{xy_spn2}), the function $\ddot{\overline{y}}(t) t$ is
also Lebesgue integrable. From the formula of integration by parts
we get for small $\varepsilon
>0$
$$
\int \limits _{\varepsilon} ^{\eta_1} \ddot{\overline{y}}(t) t\,
dt = t\dot{\overline{y}}(t) |_\varepsilon ^{\eta_1} -\int \limits
_{\varepsilon}^{\eta_1}\dot{\overline{y}}(t)\, dt =-\varepsilon
\dot{\overline{y}}(\varepsilon)+\eta_1 \dot{\overline{y}}(\eta_1)
-\overline{y}(\eta_1)+\overline{y}(\varepsilon).
$$
The both parts of this equality have a limit as $\varepsilon \to
0+$; here $\overline{y}(\eta_1)=0$, $\overline{y}(\varepsilon)
\underset{\varepsilon \to 0+}{\to} 0$ by (\ref{xy_spn2}). Let
$\varepsilon\dot{\overline{y}}(\varepsilon) \underset{\varepsilon
\to 0+}{\to} c$. If $c\ne 0$, then for small $\delta>0$
$$
\int \limits _0^\delta |\dot{\overline{y}}|\, dt \ge \frac{|c|}{2}
\int \limits _0^\delta \frac{dt}{t}.
$$
Since $\overline{y}$ is absolutely continuous on $[0, \, 1)$, the
left-hand side has a finite limit. The right-hand side is
infinite, which leads to a contradiction. Hence, $c=0$ and $\int
\limits _{\varepsilon} ^{\eta_1} \ddot{\overline{y}}(t) t\, dt \to
\eta_1 \dot{\overline{y}}(\eta_1)$ as $\varepsilon \to +0$. Thus,
there exists $A_1>0$ such that
\begin{align}
\label{i_eq_adot1} I_1=-A_1 \dot{\overline{y}}(\eta_{l_1}), \quad
l_1=1.
\end{align}

Let now $s=m+1$. If $l_m<n$, then
$$
\Phi'_{m+1}(\overline{x})[\varphi _{m+1}]
\stackrel{(\ref{psi_j_def}), (\ref{vrph_m1}), (\ref{phi_f_h})}{=}
q \int \limits _{\eta _{l_m}} ^{\eta_{l_m+1}} v^q(t)
\overline{x}_{(q)}(t) (\eta_{l_m+1}-t) \, dt
\stackrel{(\ref{xy_spn2})}{=}
$$
$$
=-q\overline{\theta}^{\, -q} \int \limits _{\eta _{l_m}}
^{\eta_{l_m+1}} \ddot{\overline{y}}(t) (\eta_{l_m+1} -t)\,
dt=:I_{m+1}.
$$
Similarly as (\ref{i_eq_adot}) we prove that
\begin{align}
\label{i_eq_adot2} I_{m+1}=A_{m+1}\dot{\overline{y}}(\eta_{l_m}).
\end{align}
Let $l_m=n$. Then
$$
\Phi'_{m+1}(\overline{x})[\varphi _{m+1}]
\stackrel{(\ref{vrph_m1}), (\ref{phi_f_h})}{=} q \int \limits
_{\eta _n} ^1 v^q(t) \overline{x}_{(q)}(t)  \, dt
\stackrel{(\ref{xy_spn2})}{=} -q \overline{\theta}^{\, -q} \int
\limits _{\eta_n}^1 \ddot{\overline{y}}(t)\, dt=:I_{m+1}.
$$
Since $\dot{\overline{y}}$ is absolutely continuous in a left
neighborhood of 1 and $\dot{\overline{y}}(1)=0$, then
\begin{align}
\label{i_eq_adot3} I_{m+1}=q \overline{\theta}^{\, -q}
\dot{\overline{y}} (\eta_n).
\end{align}

From (\ref{i_eq_adot}), (\ref{i_eq_adot1}), (\ref{i_eq_adot2}),
(\ref{i_eq_adot3}) it follows that in order to complete the proof
it is sufficient to check that $\dot{\overline{x}}(\eta_{l_s})
\dot{\overline{y}}(\eta_{l_s})>0$, $1\le s\le m$ (recall that by
assertion 3 of Proposition 1 we have
$\dot{\overline{x}}(\eta_{l_s}) \dot{\overline{y}}(\eta_{l_s})\ne
0$).

Let $l_s$ be even. By (\ref{sign}), in the left neighborhood of
$\eta_{l_s}$ we have $\overline{x}(t)<0$, $\overline{y}(t)<0$, and
in the right neighborhood of $\eta_{l_s}$ we have
$\overline{x}(t)>0$, $\overline{y}(t)>0$. Hence,
$\dot{\overline{x}}(\eta_{l_s})>0$,
$\dot{\overline{y}}(\eta_{l_s})>0$. Similarly for odd $l_s$ we get
$\dot{\overline{x}}(\eta_{l_s})<0$,
$\dot{\overline{y}}(\eta_{l_s})<0$.
\end{proof}

Now we introduce notation from \cite{vas_spn}.

For ${\bf c}=(c_0, \, c_1, \, \dots, \, c_m) \in \R^{m+1}$ we
denote
$$
\psi_{{\bf c}} =\sum \limits _{s=0}^m c_s
\frac{\ddot{\overline{x}}}{g} \cdot \chi _{[\eta_{l_s}, \,
\eta_{l_{s+1}}]}.
$$
Let
\begin{align}
\label{c_c_in_rm} {\cal C}=\left\{{\bf c} \in \R^{m+1}:\; \|\psi
_{{\bf c}}\|^p_{L_p[0, \, 1]} \equiv\sum \limits _{s=0}^m |c_s|^p
\int \limits _{\eta_{l_s}} ^{\eta_{l_{s+1}}}
\left|\frac{\ddot{\overline{x}}} {g}\right|^p\, dt=1\right\}.
\end{align}
For $\delta>0$ we set
\begin{align}
\label{m_delta} M_\delta = \left\{f\in W^{2,1}_{p,g}[0, \, 1]:\;
\exists {\bf c}={\bf c}(f, \, \delta) \in {\cal C}:\; \left\|
\frac{\ddot{f}}{g} -\psi_{{\bf c}}\right\| _{L_p[0, \, 1]} \le
\delta \right\}.
\end{align}

Let $f\in W^{2,1}_{p,g}[0, \, 1]$. For $1\le s\le m-1$ we set
$$
Q_sf(t)=\left\{ \begin{array}{l} Q^0_sf(t), \quad \eta_{l_s}\le
t\le \eta_{l_{s+1}}, \\ (Q^0_sf)'(\eta_{l_s}) (t-\eta_{l_s}) +\int
\limits_t^{\eta_{l_s}} (\tau-t)\ddot{f}(\tau)\, d\tau, \quad 0\le
t\le \eta_{l_s}.\end{array} \right.
$$
For $0<\gamma_s<\eta_{l_s}-\eta_{l_s-1}$ we set
$$
\hat Q_sf(t)=Q_sf(t)-\frac{Q_sf(\eta_{l_s}-\gamma_s)}
{\eta_{l_s+1}-\eta_{l_s}+\gamma_s}(\eta_{l_s+1}-t)_+, \quad t\in
[\eta_{l_s}-\gamma_s, \, \quad \eta_{l_{s+1}}].
$$
Then $\hat Q_sf(\eta_{l_s}-\gamma_s)=0$.

For $l_s=l_{s+1}-1$ we denote
$$
\varphi _{s+1}^{\gamma_{s}}(t)= \left\{ \begin{array}{l} 0, \quad
0\le t\le \eta_{l_s}-\gamma_{s}, \\ t-\eta_{l_s} + \gamma_{s},
\quad \eta_{l_s}-\gamma_{s}\le t\le \eta_{l_{s+1}},
\\ \eta_{l_{s+1}} -\eta_{l_s} +\gamma_{s}, \quad \eta_{l_{s+1}}
\le t\le 1.
\end{array} \right.
$$

Let
$$
\tilde \varphi_{s+1}=\left\{ \begin{array}{l} \varphi_{s+1}, \quad
l_{s+1}-1>l_s, \\ \varphi _{s+1}^{\gamma_{s}}, \quad
l_{s+1}-1=l_s, \end{array} \right.
$$
$$\tilde Q_sf(t)=\hat Q_sf(t)+\rho_s\tilde \varphi_{s+1}(t), \quad t\in
[\eta_{l_s} -\gamma_s, \, \eta_{l_{s+1}} -\gamma_{s+1}], \quad
1\le s\le m-1,$$ $$\tilde Q_0f(t)=Q_0f(t)+\rho_0\varphi_1(t),
\quad t\in [0, \, \eta_{l_1} -\gamma_1],$$ $$\tilde
Q_mf(t)=Q_mf(t)+\rho_m \varphi_{m+1}(t), \quad t\in
[\eta_{l_m}+\gamma_{m+1}, \, 1];$$ here the numbers $\rho_s$ are
such that $\tilde Q_sf(\eta_{l_{s+1}} -\gamma _{s+1}) =0$, $0\le
s\le m-1$, $\tilde Q_mf(\eta_{l_m}+\gamma_{m+1})=0$. Denote
$Q^*f(t)=-f(t)+f(\eta_{l_m}-\gamma_m) +c(t-\eta_{l_m}+\gamma_m)$,
$t\in [\eta_{l_m}-\gamma_m, \, \eta_{l_m}+\gamma_{m+1}]$, where
$c\in \R$ is such that $Q^*f(\eta_{l_m}+\gamma_{m+1}) =0$;
$$
\tilde Qf(t)=\left\{ \begin{array}{l} \tilde Q_0f(t), \quad t\in
[0, \, \eta_{l_1}-\gamma_1], \\ \tilde Q_sf(t), \quad t\in
[\eta_{l_s} -\gamma_s, \, \eta_{l_{s+1}}-\gamma _{s+1}], \;\; 1\le
s\le m-1, \\ Q^*f(t), \quad t\in [\eta_{l_m}-\gamma_m, \,
\eta_{l_m}+\gamma_{m+1}], \\ \tilde Q_mf(t), \quad t\in
[\eta_{l_m}+\gamma_{m+1}, \, 1].\end{array}\right.
$$

We set $\tilde\eta_{l_s} =\eta_{l_s}-\gamma_s$ for $1\le s\le m$,
$\tilde \eta_j=\eta_j$ for $j\in \{1, \, \dots, \, l_m-1\}
\backslash \{l_1, \, \dots, \, l_{m-1}\}$, $\tilde \eta_{l_m+1}
=\eta_{l_m}+\gamma_{m+1}$, $\tilde \eta_j=\eta_{j-1}$, $j\in
\{l_m+2, \, \dots, \, n+1\}$,
\begin{align}
\label{til_psi_j} \tilde \psi_j(t)=\left\{ \begin{array}{l} 0,
\quad 0\le t\le \tilde \eta_{j-1}, \\ t-\tilde \eta_{j-1}, \quad
\tilde \eta_{j-1} \le t\le \tilde \eta_j, \\ \tilde \eta_j-\tilde
\eta_{j-1}, \quad \tilde \eta_j\le t\le 1,\end{array}\right.
\end{align}
$1\le j\le n+1$,
\begin{align}
\label{tilde_l} \tilde L=\left\{ \sum \limits _{j=1}^{n+1}
c_j\tilde \psi_j:\; c_j\in \R\right\}.
\end{align}

Arguing similarly as in \cite{vas_spn}, we obtain the following
assertions.
\begin{Lem}
\label{r2_lem0} The inclusion $f+\tilde Qf\in \tilde L$ holds.
\end{Lem}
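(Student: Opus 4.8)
The plan is to characterise $\tilde L$ explicitly and then verify membership piece by piece. By (\ref{til_psi_j}) and (\ref{tilde_l}), $\tilde L$ is exactly the space of continuous functions on $[0,\,1]$ that vanish at $0$ and are affine on each $(\tilde\eta_{j-1},\,\tilde\eta_j)$, $1\le j\le n+1$. Writing $F:=f+\tilde Qf$, it therefore suffices to prove that (i) $F(0)=0$, (ii) $F$ is continuous, and (iii) $F$ is affine on each of these subintervals. The junctions of the piecewise definition of $\tilde Qf$ are the shifted coincidence points $\tilde\eta_{l_s}=\eta_{l_s}-\gamma_s$ $(1\le s\le m)$ and the split point $\tilde\eta_{l_m+1}=\eta_{l_m}+\gamma_{m+1}$, so the regions $[0,\,\tilde\eta_{l_1}]$, $[\tilde\eta_{l_s},\,\tilde\eta_{l_{s+1}}]$ $(1\le s\le m-1)$, $[\tilde\eta_{l_m},\,\tilde\eta_{l_m+1}]$ and $[\tilde\eta_{l_m+1},\,1]$ tile $[0,\,1]$, and only the second family contains an ``extension'' subinterval $[\tilde\eta_{l_s},\,\eta_{l_s}]$.

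For (iii) the workhorse is (\ref{pl_incl}) together with Lemma \ref{lem_ogr}: on each coincidence interval $[\eta_{l_s},\,\eta_{l_{s+1}}]$ one has $Q_Lf=Q^0_sf$ and $f+Q_Lf\in L_n$, so $f+Q_Lf$ is continuous and affine between consecutive $\eta_j$. On every regular subinterval (one contained in some $[\eta_{l_s},\,\eta_{l_{s+1}}]$) the map $\tilde Qf$ differs from $Q_Lf$ only by $\rho_0\varphi_1$, $\rho_s\tilde\varphi_{s+1}$ or $\rho_m\varphi_{m+1}$ and by the affine term subtracted in the definition of $\hat Q_s$, all of which are affine there or break only at some $\tilde\eta_j$; hence $F$ is affine on each such subinterval. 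On the central region $[\tilde\eta_{l_m},\,\tilde\eta_{l_m+1}]$ the claim is immediate, since there $\tilde Qf=Q^*f=-f+(\text{affine})$, so $F$ is affine; this is the step that splits the last coincidence point and supplies the extra $(n+1)$-st dimension of $\tilde L$.

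The delicate point, which I expect to be the main obstacle, is the affineness of $F$ across the un-shifted coincidence point $\eta_{l_s}$ inside $[\tilde\eta_{l_s},\,\tilde\eta_{l_{s+1}}]$ $(1\le s\le m-1)$: the interval $(\tilde\eta_{l_s},\,\eta_{l_s+1})$ carries no $\tilde\eta$-breakpoint, so $F$ must be a single affine branch there although its formula changes at $\eta_{l_s}$. Here I would use that the extension of $Q_sf$ to $[\tilde\eta_{l_s},\,\eta_{l_s}]$ is constructed, via the integral Taylor identity underlying (\ref{q0s}), precisely as the $C^1$-continuation of $Q^0_sf$ at $\eta_{l_s}$ for which $f+Q_sf$ remains affine; in particular its value and right-hand derivative at $\eta_{l_s}$ are $0=Q^0_sf(\eta_{l_s})$ and $(Q^0_sf)'(\eta_{l_s})$. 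Since $f+Q^0_sf$ is affine on $(\eta_{l_s},\,\eta_{l_s+1})$ by the previous paragraph, this $C^1$-gluing forces the two affine pieces to be one and the same affine function on $(\tilde\eta_{l_s},\,\eta_{l_s+1})$, and the correction defining $\hat Q_sf$ together with $\rho_s\tilde\varphi_{s+1}$ do not disturb this.

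It remains to establish (ii) and (i). For continuity I would check that $\tilde Qf$ vanishes at every junction from both sides: $\hat Q_sf(\tilde\eta_{l_s})=0$ by the definition of $\hat Q_s$ and $\tilde\varphi_{s+1}(\tilde\eta_{l_s})=0$ by its support, the scalars $\rho_s$ $(0\le s\le m-1)$ are chosen so that $\tilde Q_sf(\tilde\eta_{l_{s+1}})=0$, while $Q^*f(\tilde\eta_{l_m})=Q^*f(\tilde\eta_{l_m+1})=0$ by the choice of $c$ and $\tilde Q_mf(\tilde\eta_{l_m+1})=0$ by the choice of $\rho_m$; since each branch is continuous on its region, $\tilde Qf$ and hence $F$ is continuous. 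For (i), $f(0)=0$ by the boundary condition ($k=1$), $\varphi_1(0)=0$, and $Q_0f(0)=Q_Lf(0)=0$ because by (\ref{httt}) the first column of the determinant (\ref{gttxi_eta}) vanishes at $t=0$, whence $G(0,\,\tau,\,\xi,\,\eta)\equiv0$. Combining (i)--(iii) gives $F\in\tilde L$. The only remaining labour is the routine bookkeeping of the index shift ($\tilde\eta_j=\eta_j$ for $j<l_m$ outside $\Lambda$, $\tilde\eta_j=\eta_{j-1}$ for $j\ge l_m+2$), needed to confirm that the breakpoints produced above are exactly the $\tilde\eta_j$ and number $n+1$.
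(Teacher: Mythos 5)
Your overall plan --- verify that $F:=f+\tilde Qf$ vanishes at $0$, is continuous, and is piecewise affine region by region --- is the natural one (the paper itself gives no argument here, deferring to \cite{vas_spn}), and your treatment of $F(0)=0$ via $G(0,\,\tau,\,\xi,\,\eta)\equiv 0$ and of continuity at the junctions (both one-sided limits of $\tilde Qf$ vanish there) is correct. But two steps are genuinely defective. First, your characterization of $\tilde L$ is wrong: by (\ref{til_psi_j}) every $\tilde\psi_j$ is \emph{constant} on $[\tilde\eta_{n+1},\,1]$, hence so is every element of $\tilde L$. The space you describe (continuous, vanishing at $0$, affine between consecutive $\tilde\eta_j$) is not $\tilde L$: as you literally state it nothing at all is imposed on $[\tilde\eta_{n+1},\,1]$, and even imposing affineness there gives dimension $n+2$, whereas $\dim \tilde L=n+1$. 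Consequently your checks (i)--(iii) do not imply $F\in\tilde L$; one must additionally verify that $F$ is constant on the last interval. This is true --- on region $m$ one has $F=P_Lf+\rho_m\varphi_{m+1}$, and $P_Lf\in L_n$ as well as $\varphi_{m+1}$ (see (\ref{vrph_m1})) are constant on $[\eta_n,\,1]$ --- but it appears nowhere in your proof.

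The second gap sits exactly at the point you yourself call delicate. You assert, without computing, that the extension of $Q_sf$ to $[\eta_{l_s}-\gamma_s,\,\eta_{l_s}]$ is ``the $C^1$-continuation of $Q^0_sf$ for which $f+Q_sf$ remains affine''. That is precisely the claim that must be proved, and the one-line Taylor identity deciding it is skipped: $\int_t^{\eta_{l_s}}(\tau-t)\ddot f(\tau)\,d\tau=f(t)-f(\eta_{l_s})-\dot f(\eta_{l_s})(t-\eta_{l_s})$. Substituting this into the formula for $Q_sf$ as printed in the paper gives $Q_sf=f+(\mathrm{affine})$, i.e.\ $f+Q_sf=2f+(\mathrm{affine})$ on the extension interval, which is \emph{not} affine; the construction works only if the extension satisfies $(Q_sf)''=-\ddot f$ (matching $(Q_Lf)''=-\ddot f$, which follows from (\ref{pl_incl})), i.e.\ with the integral entering with the opposite sign. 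So either your proof must carry out this computation and correct the printed formula explicitly, or, reading the formula literally, the step you gloss over is false. Moreover, the gluing of the two affine branches at $\eta_{l_s}$ also needs $Q^0_sf(\eta_{l_s})=0$, which you state but never justify: it follows from Lemma \ref{lem_ogr} together with $\eta_{l_s}=\xi_{l_s}$ (this is where $l_s\in\Lambda$ enters) and (\ref{qlxij0}). These verifications are the actual content of the lemma; without them the proposal remains an outline rather than a proof.
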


\begin{Lem}
\label{r2_lem} There are numbers $\delta>0$, $\gamma_0>0$, and
$C_*=C_*(g, \, v, \, p, \, q, \, \overline{x}, \, \overline{y})$,
for which, for any $\gamma\in (0, \, \gamma_0)$, there are
$\gamma_1, \, \dots, \, \gamma_{m+1} \in (0, \, \gamma)$ such that
$$
\int \limits_0^1 v^q(t)|Q_Lf(t)|^q\, dt-\int \limits_0^1 v^q(t)
|\tilde Qf(t)|^q\, dt\ge C_*\gamma
$$
for any  $f\in M_\delta$.
\end{Lem}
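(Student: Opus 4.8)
The plan is to read both integrals as approximation errors for the residual operators $Q_L$ and $\tilde Q$ and to show that replacing the $n$-dimensional subspace $L_n$ of (\ref{pl_incl}) by the enlarged subspace $\tilde L$ of (\ref{tilde_l}) lowers this error by an amount of order $\gamma$. By Lemma \ref{lem_ogr} the operator $Q_Lf$ restricts on each $[\eta_{l_s},\eta_{l_{s+1}}]$ to $Q^0_sf$, so the piecewise objects $\hat Q_sf$, $\tilde Q_sf$ defining $\tilde Qf$ are genuine modifications of $Q_Lf$. These modifications consist of the break-point shifts $\eta_{l_s}\mapsto\eta_{l_s}-\gamma_s$, the linear corrections entering $\hat Q_sf$, and the added ramps $\rho_s\tilde\varphi_{s+1}$. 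Since $Q_Lf$ vanishes at every $\eta_{l_s}=\xi_{l_s}$ by (\ref{qlxij0}) and (\ref{lambda_def}), each correction, and hence $Rf:=\tilde Qf-Q_Lf$, is of size $O(\gamma)$. I would then expand $\int_0^1 v^q|Q_Lf|^q\,dt-\int_0^1 v^q|\tilde Qf|^q\,dt$ to first order in the small perturbation $Rf$.

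The heart of the matter is the case $f=\overline x$. Here $r=2$, $k=1$, so (\ref{ovr_x}) gives $Q_L\overline x=-\overline x$, whence $\int v^q|Q_L\overline x|^q\,dt=\int v^q|\overline x|^q\,dt$. Using the differentiability of $f\mapsto\int v^q|f|^q\,dt$ from Proposition \ref{diff}, the difference equals $-q\int v^q(Q_L\overline x)_{(q)}R\overline x\,dt$ plus a remainder quadratically small in $\gamma$; since $(Q_L\overline x)_{(q)}=-\overline x_{(q)}$, the leading term is $q\int v^q\overline x_{(q)}R\overline x\,dt$. On the $s$-th interval the dominant part of $R\overline x$ is a multiple of $\tilde\varphi_{s+1}$ whose coefficient is, to first order, proportional to $\gamma_s\dot{\overline x}(\eta_{l_s})$, so the leading contribution is governed by the quantities $\dot{\overline x}(\eta_{l_s})\Phi_s'(\overline x)[\varphi_s]$, $1\le s\le m$, and $\dot{\overline x}(\eta_{l_m})\Phi_{m+1}'(\overline x)[\varphi_{m+1}]$. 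By Lemma \ref{lem_ineq} these all have the sign that makes every local contribution a strict decrease of order $\gamma_s$; choosing the $\gamma_s\in(0,\gamma)$ in fixed proportions then produces a total decrease $\ge c\gamma$ for some $c>0$.

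Next I would pass from $\overline x$ to an arbitrary $f\in M_\delta$. By (\ref{m_delta}) and (\ref{c_c_in_rm}) such $f$ has $\ddot f/g$ within $\delta$ in $L_p$ of a function $\psi_{{\bf c}}$ equal to $c_s\,\ddot{\overline x}/g$ on each $[\eta_{l_s},\eta_{l_{s+1}}]$; reconstructing via $Q_L$, the restriction of $f$ to each such interval is $L_{q,v}$-close to $\pm|c_s|$ times the corresponding piece of $\overline x$. Because the sign analysis of Lemma \ref{lem_ineq} is carried out interval by interval, it is stable under this rescaling, and the first-order decrease computed for $\overline x$ persists for all $f\in M_\delta$ with a possibly smaller constant, once $\delta$ is small enough. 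I would fix $\delta$, $\gamma_0$, and the proportions of the $\gamma_s$ accordingly.

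The hard part will be the uniformity in $f$ together with the requirement that the linear-in-$\gamma$ decrease dominate the quadratic remainder \emph{uniformly} over $M_\delta$. Concretely one must verify that the correction coefficients $\rho_s$ are $O(\gamma_s)$ and depend continuously on $f$, that the Taylor remainder of $\int v^q|\cdot|^q\,dt$ is $o(\gamma)$ uniformly (so the positive linear term wins for all small $\gamma$), and that the definite signs survive the rescaling, including the case $c_s<0$. A separate technical point is integrability near $t=0$, where $v$ may be unbounded: exactly as in the proof of Lemma \ref{lem_ineq}, this is controlled through (\ref{int_tvq}) and the H\"older inequality, which justifies the boundary terms in the integration by parts underlying the first-order expansion.
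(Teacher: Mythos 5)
Your proposal is correct and reconstructs essentially the paper's own argument: the paper proves this lemma by the method of \cite[\S 5]{vas_spn}, and the ingredients it prepares for that purpose --- the localization Lemma \ref{lem_ogr}, the Lagrange variation of Proposition \ref{diff}, and the sign inequalities of Lemma \ref{lem_ineq} --- enter your proof exactly where they enter the paper's: a first-order expansion of the $q$-power functional in $Rf=\tilde Qf-Q_Lf$ (with $Q_L\overline x=-\overline x$ for $r=2$, $k=1$), knot-shift coefficients of size $\gamma_s\dot{\overline{x}}(\eta_{l_s})$ pairing with the $\Phi_s'$ quantities whose signs come from $\dot{\overline{x}}(\eta_{l_s})\dot{\overline{y}}(\eta_{l_s})>0$, and uniformity over $M_\delta$ via the $\psi_{{\bf c}}$-rescaling, H\"older, and (\ref{int_tvq}). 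One harmless inaccuracy: by Lemma \ref{lem_ogr} it is $Q_Lf|_{[\eta_{l_s},\,\eta_{l_{s+1}}]}$, not the restriction of $f$ itself, that is $L_{q,v}$-close to $-c_s\overline{x}$ on each interval ($f$ on a given interval also depends on $\ddot f$ outside it through boundary data), but since your expansion is written entirely in terms of $Q_Lf$ and $Rf$, this does not affect the argument.
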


\section{The strict decreasing of widths}

Let $n\in \Z_+$. We prove that
\begin{align}
\label{strict_monot} d_{n+1}(W^{r,k}_{p,g}[0, \, 1], \, L_{q,v}[0,
\, 1])< d_n(W^{r,k}_{p,g}[0, \, 1], \, L_{q,v}[0, \, 1]).
\end{align}

Let $\overline{\theta} = \overline{\theta}_n$, $(\overline{x}, \,
\overline{y}, \, \overline{\theta}) \in SP_n$.

For $r=2$ we define the set $\Lambda$ by (\ref{lambda_def}).

Consider two cases.
\begin{enumerate}
\item $r=2$, $\Lambda \ne \varnothing$,
\item either $r=2$ and $\Lambda=\varnothing$ or $r\ge 3$.
\end{enumerate}
In the first case we choose $\delta$ according to Lemma
\ref{r2_lem} and take $M=M_\delta$ as defined in (\ref{m_delta}).
In the second case we set
$$
M=\left\{ f\in W^{r,k}_{p,g}[0, \, 1]:\; \left\| \frac{f^{(r)}}{g}
-\frac{\overline{x}^{(r)}}{g}\right\|_{L_p[0, \, 1]}\le \delta
\quad \text{or} \quad \left\| \frac{f^{(r)}}{g}
+\frac{\overline{x}^{(r)}}{g}\right\|_{L_p[0, \, 1]}\le
\delta\right\}.
$$

For $r=2$ we define the function $P_Lf$ by formula
(\ref{pl_incl}). For $r\ge 3$ we set
$$
P_Lx(t):=x(t) - (-1)^{r-k}\int \limits_0^1 G(t, \, \tau, \, \xi,
\, \eta) x^{(r)}(\tau)\, d\tau=\sum \limits _{j=1}^n c_j(x) H(t,
\, \eta_j)
$$
(see (\ref{xt1rk})).

Let $N=W^{r,k}_{p,g}[0, \, 1] \backslash M$. We show that there
exists $\sigma>0$ such that
\begin{align}
\label{fplf} \|f-P_Lf\|_{L_{q,v}[0, \, 1]} \le
\overline{\theta}^{\, -1}-\sigma, \quad f\in N.
\end{align}
Suppose the contrary. Similarly as in \cite[p. 389]{vas_spn} it
can be proved that there exists a function $x$ such that
$\|x-P_Lx\|_{L_{q,v}[0, \, 1]} =\overline{\theta}^{\, -1}$,
$\left\| \frac{x^{(r)}}{g}\right\| _{L_p[0, \, 1]}=1$; in
addition, in case 1 the equality $\frac{\ddot{x}}{g} =\psi_{{\bf
c}}$ cannot hold for any ${\bf c}\in {\cal C}$ (see
(\ref{c_c_in_rm})), and in case 2 $x^{(r)}\ne \pm
\overline{x}^{(r)}$. Hence, there exists $\tau_*\in (0, \, 1)$
such that $\left. \frac{x^{(r)}} {\overline{x}^{(r)}} \right|
_{(\tau_*-\varepsilon, \, \tau_*+\varepsilon)}\ne {\rm const}$ for
any $\varepsilon>0$; moreover, $\tau_*\notin \{\eta_j:\; j\in
\Lambda\}$ in case 1. By Corollary \ref{r32l}, there exist
intervals $\Delta'$, $\Delta'' \subset (0, \, 1)$ such that $G(t,
\, \tau, \, \xi, \, \eta)\ne 0$ for a.e. $(t, \, \tau) \in
\Delta'\times \Delta''$ and $\left. \frac{x^{(r)}}
{\overline{x}^{(r)}} \right| _{\Delta''} \ne {\rm const}$. By
(\ref{ovr_x}) and (\ref{xtgt}),
$$
(-1)^{r-k} \frac{G(t, \, \tau, \, \xi, \, \eta)
\overline{x}^{(r)}(\tau)}{\overline{x}(t)} \ge 0, \quad \int
\limits _0^1 \frac{G(t, \, \tau, \, \xi, \, \eta)
\overline{x}^{(r)}(\tau)}{\overline{x}(t)}\, d\tau = (-1)^{r-k}.
$$
Arguing as in \cite[p. 389--390]{vas_spn}, we lead to a
contradiction. This completes the proof of (\ref{fplf}).

We construct a subspace $\tilde L$ of dimension $n+1$ such that
\begin{align}
\label{sup_x_wrpg} \sup _{x\in W^{r,k}_{p,g}[0, \, 1]} \inf _{z\in
\tilde L} \|x-z\| _{L_{q,v}[0, \, 1]} < \overline{\theta}^{\, -1}.
\end{align}
This implies (\ref{strict_monot}).

Consider case 1. Define the functions $\tilde \psi_j$ and the
space $\tilde L$ by formulas (\ref{til_psi_j}) and (\ref{tilde_l})
correspondingly. We choose $\delta>0$, $\gamma_0>0$, and for
arbitrary $\gamma\in (0, \, \gamma_0)$ we find $\gamma_1, \,
\dots, \, \gamma_{m+1}$ such that the following inequality holds
by Lemma \ref{r2_lem}:
\begin{align}
\label{dif_gamma} \int \limits_0^1 v^q(t)|Q_Lx(t)|^q\, dt-\int
\limits_0^1 v^q(t) |\tilde Qx(t)|^q\, dt\ge C_*\gamma, \quad x\in
M, \quad \gamma \in (0, \, \gamma_0).
\end{align}
For $x\in N$ we define the function $\tilde Px=\tilde P_\gamma x$
as follows: if $P_Lx=\sum \limits _{j=1}^n c_j(x)\psi_j$, then we
set $\tilde Px=\sum \limits _{j=1}^{l_m} c_j(x) \tilde \psi_j
+\sum \limits _{j=l_m+2}^{n+1}c_{j-1}(x)\tilde \psi_j$. By
assertion 2 of Lemma \ref{g_prop} and (\ref{hj_sum}), the set
$\{c_i(x):\; x\in W^{2,1}_{p,g}[0, \, 1], \;\; 1\le i\le n\}$ is
bounded; hence,
$$\sup _{x\in N} \|\tilde P_\gamma x -P_Lx\|_{L_{q,v}[0, \, 1]}
\underset{\gamma \to 0}{\to} 0.$$ Therefore, if $\gamma$ is
sufficiently small, then $\|x-\tilde Px\| _{L_{q,v}[0, \, 1]}
\stackrel{(\ref{fplf})}{\le} \overline{\theta}^{\,
-1}-\frac{\sigma}{2}$ for any $x\in N$. Since $x+\tilde Qx \in
\tilde L$ by Lemma \ref{r2_lem0}, this together with
(\ref{dif_gamma}) implies (\ref{sup_x_wrpg}).

In case 2 the arguments are similar as in \cite[p. 391]{vas_spn}.

\section{Applications}

In \cite{vas_alg_an} order estimates for Kolmogorov widths of
classes $W^{r,k}_{p,g}[0, \, e^{-1}]$ were obtained, where
\begin{align}
\label{g_v_def} g(x)=x^{-\beta_g}|\ln x|^{-\alpha_g}\rho_g(|\ln
x|), \quad v(x)=x^{-\beta_v}|\ln x|^{-\alpha_v}\rho_v(|\ln x|),
\end{align}
\begin{align}
\label{beta_g} \beta_g+\beta_v=r+\frac 1q-\frac 1p, \quad
\beta_v\notin \left\{ \frac 1q, \, \frac 1q+1, \, \dots, \, \frac
1q+r-1\right\},
\end{align}
\begin{align}
\label{alpha_g} \alpha_g+\alpha_v>\left(\frac 1q-\frac
1p\right)_+,
\end{align}
$\rho_g$, $\rho_v$ were absolutely continuous functions such that
\begin{align}
\label{rho_g} \lim \limits _{y\to +\infty}
\frac{y\rho_g'(y)}{\rho_g(y)}=\lim \limits _{y\to +\infty}
\frac{y\rho_v'(y)}{\rho_v(y)}=0.
\end{align}
Denote $\alpha=\alpha_g+\alpha_v$, $\rho(y)=\rho_g(y)\rho_v(y)$.

\begin{Trm}
\label{x_ln} Let $r\in \N$, $1<q\le p<\infty$, let conditions
(\ref{g_v_def}), (\ref{beta_g}), (\ref{alpha_g}), (\ref{rho_g})
hold, where $\beta_v \in \left(\frac 1q+k-1, \, \frac
1q+k\right)$, $1\le k\le r-1$. Let $\overline{\theta}_n=\sup
sp_n$, where the set $sp_n$ is defined according to
(\ref{eq_main}). Then
\begin{align}
\label{theta_n_est} \overline{\theta}_n \asymp \left\{
\begin{array}{l} n^r, \quad \alpha>r+\frac 1q-\frac 1p, \\
n^r(\log n)^{-r-\frac 1q+\frac 1p}, \quad \alpha>r+\frac 1q-\frac
1p, \quad \rho_g\equiv 1, \; \rho_v\equiv 1, \\ n^{\alpha-\frac
1q+\frac 1p}[\rho(n)]^{-1}, \quad \alpha<r+\frac 1q-\frac 1p.
\end{array} \right.
\end{align}
Moreover, if $\alpha$ and $\rho$ are such that $gv\in
L_\varkappa[0, \, e^{-1}]$ with $\frac{1}{\varkappa}=r +\frac
1q-\frac 1p$, then
\begin{align}
\label{th_asymp} \lim \limits _{n\to \infty} n^r \theta_n^{-1} =
\lambda_{rqp}^{-1}\|gv\|_{L_\varkappa [0, \, e^{-1}]},
\end{align}
where $\lambda_{rqp}$ is the first eigenvalue for the problem
$$
(-1)^{r+1}((x^{(r)})_{(p)})^{(r)}+ \lambda^q x_{(q)}=0
$$
with periodic boundary conditions.
\end{Trm}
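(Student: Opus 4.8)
The plan is to deduce both parts of Theorem~\ref{x_ln} from Theorem~\ref{main_trm}, which identifies $\overline{\theta}_n^{\,-1}$ with the Kolmogorov (and linear, Gelfand) width $d_n(W^{r,k}_{p,g}[0,\,e^{-1}],\,L_{q,v}[0,\,e^{-1}])$ via (\ref{main_eq}). First I would check that under (\ref{g_v_def})--(\ref{rho_g}) with $\beta_v\in(\tfrac1q+k-1,\,\tfrac1q+k)$ the relevant two-weighted Riemann--Liouville operator is compact, so that Theorem~\ref{main_trm} applies; the range imposed on $\beta_v$ is exactly the one that singles out the boundary-condition index $k$ and places us in the setting of \cite{vas_alg_an}. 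The order estimate (\ref{theta_n_est}) is then immediate: \cite{vas_alg_an} supplies the order of $d_n$ of precisely these classes in the three regimes of $\alpha$, and (\ref{theta_n_est}) is its reciprocal through (\ref{main_eq}).

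For the exact asymptotics (\ref{th_asymp}) I would first record that (\ref{g_v_def}) together with (\ref{beta_g}) give $g(x)v(x)=x^{-1/\varkappa}|\ln x|^{-\alpha}\rho(|\ln x|)$ with $\tfrac1\varkappa=r+\tfrac1q-\tfrac1p$, so $gv\in L_\varkappa$ forces $\alpha>r+\tfrac1q-\tfrac1p$, i.e. the first regime $\overline{\theta}_n\asymp n^r$; the task is to pin down the constant. By (\ref{main_eq}) it suffices to prove $\lim_n n^r d_n=\lambda_{rqp}^{-1}\|gv\|_{L_\varkappa[0,e^{-1}]}$. The model computation is the one with constant weights $g_0,v_0$ on an interval of length $h$: the scaling $t\mapsto t/h$ shows $d_n(W^{r,k}_{p,g_0}[0,h],L_{q,v_0}[0,h])=g_0v_0h^{\,r+1/q-1/p}\,d_n(W^{r,k}_{p,1}[0,1],L_{q,1}[0,1])$, and the Buslaev--Tikhomirov identification of the asymptotic width constant (which for fixed $r,p,q$ is independent of the admissible boundary conditions and equals the reciprocal of the first eigenvalue of the periodic problem) gives $d_n(W^{r,k}_{p,1}[0,1],L_{q,1}[0,1])\sim\lambda_{rqp}^{-1}n^{-r}$. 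Hence the local width is asymptotic to $\lambda_{rqp}^{-1}\|g_0v_0\|_{L_\varkappa[0,h]}n^{-r}$, which is where the periodic eigenvalue $\lambda_{rqp}$ enters.

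The core is then a localization argument. I would partition $[0,e^{-1}]$ into $m$ subintervals $I_j$, replacing $g,v$ by constants on each $I_j$; the oscillation is controlled by continuity and the slow-variation conditions (\ref{rho_g}), while the singular endpoint $0$ is handled by first discarding a neighbourhood whose contribution to $\int|gv|^\varkappa$ is small. For the upper bound I approximate by a direct sum of local extremal subspaces, allotting $n_j$ dimensions with $\sum_j n_j\le n$; since $p\ge q$, H\"older's inequality applied to $\|vf\|_{L_q}^q=\sum_j\|vf\|_{L_q(I_j)}^q$ under $\sum_j\|f^{(r)}/g\|_{L_p(I_j)}^p\le1$ yields $\|f-Pf\|_{L_{q,v}}\le\bigl(\sum_j(d^{(j)}_{n_j})^{\sigma}\bigr)^{1/\sigma}$ with $\tfrac1\sigma=\tfrac1q-\tfrac1p$. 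Inserting the model asymptotic and minimising over $\sum_j n_j=n$ gives the optimal allocation $n_j\propto n\int_{I_j}|gv|^\varkappa$; because $\tfrac1\varkappa=r+\tfrac1\sigma$ \emph{exactly} and $\int_{I_j}|gv|^\varkappa$ is additive in $j$, the bound collapses to $\lambda_{rqp}^{-1}\|gv\|_{L_\varkappa}n^{-r}$ with no residual partition dependence. Letting $n\to\infty$ for fixed $m$ and then $m\to\infty$ gives $\limsup_n n^rd_n\le\lambda_{rqp}^{-1}\|gv\|_{L_\varkappa}$. The matching lower bound I would obtain by restricting the class to functions assembled from independent bumps supported in the $I_j$, reducing the problem to a finite-dimensional $\ell_p\to\ell_q$ width together with the model lower bound; the same $\ell_\sigma$-combination and allocation then reproduce the constant.

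\textbf{Main obstacle.} The delicate part is not the bookkeeping but the two genuinely hard inputs. The \emph{exact} constant $\lambda_{rqp}^{-1}$ in the constant-weight model rests on the periodic spectral problem and the full Buslaev--Tikhomirov machinery rather than on mere order estimates. And the \emph{lower} bound for $p>q$ cannot be read off from a naive additivity of counting functions — that additivity holds only when $p=q$, where $\varkappa r=1$ — and must instead be forced through the $\ell_\sigma$-structure with $\tfrac1\sigma=\tfrac1q-\tfrac1p$. Keeping the localization errors, the weight oscillation, and the contribution of the singularity at $0$ uniformly $o(n^{-r})$ throughout the double limit is the principal technical burden.
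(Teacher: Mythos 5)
Your treatment of the order estimates (\ref{theta_n_est}) --- compactness, then Theorem \ref{main_trm} combined with the order estimates of \cite{vas_alg_an} --- is exactly the paper's argument. For the asymptotics (\ref{th_asymp}) your overall architecture (reduce to weights that are tame away from $0$, feed in the Buslaev--Tikhomirov periodic constant for the constant-weight model, control the singular endpoint, take a double limit) is in the right spirit, but note that the paper does not redo any localization: its only new technical step is the norm inequality (\ref{iabk_est}), $\|\tilde I^{a,b,k}_{r,g,v}\varphi\|_{L_q[a,b]}\lesssim\|gv\|_{L_\varkappa[a,b]}\|\varphi\|_{L_p[a,b]}$, proved from Propositions 2--4 of \cite{vas_alg_an} together with the embedding $\ell_\varkappa\hookrightarrow\ell_{pq/(p-q)}$ (the inequality $\varkappa<\frac{pq}{p-q}$ is exactly the room created by $\frac1\varkappa=r+\frac1q-\frac1p$); with this in hand it invokes Buslaev's already-established exact asymptotics for piecewise-continuous weights \cite{busl_aa} and passes to the limit as in \cite[\S 4]{vas_m_sb}. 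Your plan instead re-proves the piecewise-constant case from scratch, and that is where it breaks.

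The genuine gap is your lower bound. Bumps supported in $I_j$ must vanish to high order at the endpoints of $I_j$, so a one-bump-per-coordinate diagonal set, combined with exact finite-dimensional widths of diagonal sets $\ell_p^N\to\ell_q^N$, reproduces the first eigenvalue of a boundary-value spectral problem, which is strictly worse than the periodic constant $\lambda_{rqp}^{-1}$. If instead each $I_j$ carries a block with $n_j$ oscillations, there is no general principle converting local Kolmogorov-width lower bounds into an $\ell_\sigma$-combination across a partition: the standard direct-sum lower bound (via Bernstein widths, i.e.\ a ball inclusion in $\bigoplus_j L_j$) yields only $\min_j R_j$, and optimizing the allocation $n_j$ then produces the $\ell_{1/r}$-norm of the sequence $\bigl(\|gv\|_{L_\varkappa(I_j)}\bigr)_j$, which for $p>q$ is strictly smaller than the required $\ell_\varkappa$-norm (since $\varkappa<1/r$). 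This is precisely why the known proofs of the exact constant for $p\ge q$ go through the identification of widths with spectral numbers (\cite{busl_tikh}, \cite{busl_aa}) rather than through discretization; the paper inherits its lower bound from \cite{busl_aa} applied to minorizing piecewise-continuous weights (using monotonicity of the class in $g$ and of the norm in $v$), and needs (\ref{iabk_est}) only to make the truncation error near $0$ vanish in the limit. Relatedly, your step of ``discarding a neighbourhood of $0$'' in the upper bound also requires exactly the estimate (\ref{iabk_est}): you correctly flag the singular contribution as the principal burden, but propose no mechanism that bounds it by $\|gv\|_{L_\varkappa[0,\delta]}$.
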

\begin{proof}
By Theorem \ref{main_trm},
$$
\theta_n^{-1} = d_n (W^{r,k}_{p,g}[0, \, e^{-1}]).
$$
This together with \cite[Theorem 1, Examples 1--3]{vas_alg_an}
yields (\ref{theta_n_est}).

Let us prove (\ref{th_asymp}). To this end we check that
\begin{align}
\label{iabk_est} \|\tilde I_{r,g,v}^{a,b,k}\varphi\| _{L_q[a, \,
b]}\underset{g, \, v, \, p, \, q, \, r}{\lesssim}
\|gv\|_{L_\varkappa[a, \, b]}\|\varphi\|_{L_p[a, \, b]}.
\end{align}

If $a\ge \frac b2$, then (\ref{iabk_est}) follows from
\cite[Proposition 2]{vas_alg_an}. Let $a<\frac b2$. Then from
\cite[Propositions 3, 4]{vas_alg_an} it follows that
\begin{align}
\label{1t} \|\tilde I_{r,g,v}^{a,b,k}\varphi\| _{L_q[a, \,
b]}\underset{g, \, v, \, p, \, q, \, r}{\lesssim} \left(\int
\limits _{|\log b|}^{|\log a|} t^{-\frac{\alpha
pq}{p-q}}[\rho(t)]^{\frac{pq}{p-q}}\, dt\right)^{\frac 1q-\frac
1p}
\end{align}
(for $p=q$ we take the norm in $L_\infty$). On the other hand,
\begin{align}
\label{2t} \|gv\|_{L_\varkappa[a, \, b]}\|\varphi\|_{L_p[a, \, b]}
=\left(\int \limits _{|\log b|}^{|\log a|} t^{-\alpha \varkappa}
[\rho(t)]^\varkappa \, dt\right)^{\frac{1}{\varkappa}}.
\end{align}
From the inequality $\varkappa <\frac{pq}{p-q}$ it follows that
$\|(x_n)_{n\in \N}\|_{l_{\frac{pq}{p-q}}} \le \|(x_n)_{n\in
\N}\|_{l_\varkappa}$ for any sequence $(x_n)_{n\in \N}$. Writing
the integrals (\ref{1t}) and (\ref{2t}) in terms of sums of
integrals over intervals of length $l\in [1, \, 2]$, we obtain
(\ref{iabk_est}).

In order to complete the proof of (\ref{th_asymp}) we argue
similarly as in \cite[\S 4]{vas_m_sb}: we apply the Buslaev's
result \cite{busl_aa} for piecewise-continuous weights and pass to
limit.
\end{proof}

\begin{Biblio}

\bibitem{bab_parf} V.F. Babenko, N.V. Parfinovich, ``Exact values of best approximations for
classes of periodic functions by splines of deficiency 2'', {\it
Math. Notes}, {\bf 85}:3 (2009), 515--527.

\bibitem{bab_parf1} V.F. Babenko, N.V. Parfinovich, ``On the exact values of the best approximations
of classes of differentiable periodic functions by splines'', {\it
Math. Notes}, {\bf 87}:5 (2010), 623--635.

\bibitem{busl_dan} A.P. Buslaev, ``
Extremal problems in the theory of approximations and the
nonlinear oscillations'', {\it DAN SSSR}, {\bf 305}:6 (1989),
1289--1294 (in Russian).

\bibitem{busl_aa} A.P. Buslaev, ``On the asymptotics of widths and spectra of nonlinear differential
equations'', {\it Algebra i Analiz}, {\bf 3}:6 (1991), 108--118;
English transl. in {\it St. Petersburg Math. J.} {\bf 3}:6 (1991),
1303--1312.

\bibitem{busl_tikh} A.P. Buslaev and V.M. Tikhomirov, ``Spectra of nonlinear
differential equations and widths of Sobolev classes'', {\it Mat.
Sb.} {\bf 181}:12 (1990), 1587--1606; English transl. in {\it
Math. USSR-Sb.} {\bf 71}:2 (1992), 427--446.

\bibitem{ev_har_lang} W.D. Evans, D.J. Harris, J. Lang, ``The approximation numbers
of Hardy-type operators on trees'', {\it Proc. London Math. Soc.}
{\bf (3) 83}:2 (2001), 390–418.

\bibitem{lang_remainder} D.E. Edmunds, R. Kerman, J. Lang, ``Remainder estimates for the approximation
numbers of weighted Hardy operators acting on $L_2$'', {\it J.
Anal. Math.}, {\bf 85}:1 (2001), 225--243.

\bibitem{edm_lang} D.E. Edmunds, J. Lang, ``Approximation numbers and Kolmogorov
widths of Hardy-type operators in a non-homogeneous case'', {\it
Math. Nachr.}, {\bf 297}:7 (2006), 727--742.

\bibitem{edm_lang_2008} D.E. Edmunds, J. Lang, ``Asymptotics for eigenvalues of a non-linear integral
system'', {\it Boll. Unione Mat. Ital.}, {\bf 1}:1 (2008),
105-119.

\bibitem{edm_lang_in_an} D.E. Edmunds, J. Lang, ``Coincidence of strict $s$-numbers
of weighted Hardy operators'', {\it J. Math. Anal. Appl.}, {\bf
381}:2 (2011), 601--611.

\bibitem{edm_lang_2013} D.E. Edmunds, J. Lang, ``Asymptotic formulae for s-numbers of a Sobolev embedding and a Volterra type
operator'', {\it Rev. Mat. Compl.}, {\it 29}:1, (2016), 1-11.

\bibitem{fabian_hajek} M. Fabian, P. Habala, P. H\'{a}jek et al.
{\it Functional Analysis and Infinite-Dimensional Geometry}.
Springe, 2001.

\bibitem{heinr} S. Heinrich,
``On the relation between linear n-widths and approximation
numbers'', {\it J. Approx. Theory}, {\bf 58}:3 (1989), 315–333.

\bibitem{karlin_studden} S. Karlin, W.J. Studden, {\it Tchebycheff Systems:
With Applications in Analysis and Statistics}, Pure and Applied
Mathematics, Vol. XV (Interscience Publishers John Wiley \& Sons,
New York–London–Sydney, 1966; Nauka, Fizmatlit, Moscow, 1976).

\bibitem{kolmog_dn} A.N. Kolmogorov, ``\"{U}ber die beste Ann\"{a}herung von Funktion einer gegebenen
funktionenklasse'', {\it Ann. Math.}, {\bf 37} (1936), 107--110.

\bibitem{kufner_heinig} A. Kufner, H.P. Heinig, ``The Hardy inequality for higher-order
derivatives'', {\it Trudy Mat. Inst. Steklov}, {\bf 192} (1990),
105--113 [{\it Proc. Steklov Inst. Math., Differential equations
and function spaces} (1992), 113-–121].

\bibitem{lang_j_at1} J. Lang, ``Improved estimates for the approximation numbers of
Hardy-type operators'', {\it J. Appr. Theory}, {\bf 121}:1 (2003),
61--70.

\bibitem{ligun_aa} A.A. Ligun, ``Diameters of certain classes of differentiate
periodic functions'', {\it Math. Notes}, {\bf 27}:1 (1980),
34--41.

\bibitem{bib_makovoz} J.I. Makovoz, ``On a method for estimation from below of diameters of sets in Banach
spaces'', {\it Mat. Sb.} {\bf 87(129)}:1 (1972), 136--142; English
transl. in {\it Math. USSR-Sb.} {\bf 16}:1 (1972), 139--146.

\bibitem{malykhin_y_v} Yu.V. Malykhin, ``Asymptotic properties of Chebyshev splines with
fixed number of knots'', {\it Fund i prikl. mat.}, {\bf 19}:5
(2014), 143--166 (in Russian).

\bibitem{pietsch1} A. Pietsch, ``$s$-numbers of operators in Banach space'', {\it Studia Math.},
{\bf 51} (1974), 201--223.

\bibitem{pinkus_79} A. Pinkus, ``On $n$-widths of periodic
functions'', {\it J. Anal. Math.}, {\bf 35} (1979), 209--235.

\bibitem{pinkus_constr} A. Pinkus, ``$n$-Widths of Sobolev Classes in $L_p$'',
{\it Constructive Approximation}, {\bf 1} (1), 15--62 (1985).

\bibitem{stepanov1} V.D. Stepanov, ``Two-weighted estimates for Riemann –- Liouville
integrals'', {\it Izv. Akad. Nauk SSSR Ser. Mat.} {\bf 54}:3
(1990), 645--656 [{\it Math. USSR-Izv.} {\bf 36}:3 (1991),
669-–681].

\bibitem{stepanov2} V.D. Stepanov,
``Weighted norm inequalities of Hardy type for a class of integral
operators'', {\it J. London Math. Soc.} {\bf 50}:1 (1994),
105--120.

\bibitem{itogi_nt}  V.M. Tikhomirov, ``Theory of approximations''. In: {\it Current problems in
mathematics. Fundamental directions.} vol. 14. ({\it Itogi Nauki i
Tekhniki}) (Akad. Nauk SSSR, Vsesoyuz. Inst. Nauchn. i Tekhn.
Inform., Moscow, 1987), pp. 103--260 [Encycl. Math. Sci. vol. 14,
1990, pp. 93--243].

\bibitem{bibl6} V.M. Tikhomirov, ``Diameters of sets in functional spaces
and the theory of best approximations'', {\it Russian Math.
Surveys}, {\bf 15}:3 (1960), 75--111.

\bibitem{tikh_babaj} V.M. Tihomirov [Tikhomirov] and S.B. Babadzanov, ``Diameters of a Function Class in an $L^p$-Space
($p\ge 1$)'', {\it Izv. Akad. Nauk UzSSR Ser. Fiz. Mat. Nauk },
{\bf 11} (1967), 24--30 (Russian).

\bibitem{tikh_nvtp_art} V.M. Tikhomirov, ``Some problems in approximation theory'', {\it Math. Notes}, {\bf 9}:5
(1971), 343--350.

\bibitem{vas_spn} A.A. Vasil'eva, ``Widths of weighted Sobolev classes
on a closed interval and the spectra of nonlinear differential
equations'', {\it Russian J. Math. Phys.}, {\bf 17}:3 (2010),
363--393.

\bibitem{vas_alg_an} A.A.~Vasil'eva, ``Kolmogorov widths
and approximation numbers of Sobolev classes with singular
weights'', {\it Algebra i Analiz}, {\bf 24}:1 (2012), 3--39.

\bibitem{vas_m_sb} A.A. Vasil'eva, ``Estimates for the widths of weighted Sobolev classes'',
{\it Sbornik: Math.}, {\bf 201}:7 (2010), 947--984.

\end{Biblio}
\end{document}